\documentclass[12pt]{amsart}

\usepackage[utf8]{inputenc}
\usepackage[T1]{fontenc}
\usepackage[english]{babel}

\usepackage{amsfonts}
\usepackage{amsmath}
\usepackage{amssymb}
\usepackage{amstext}
\usepackage{amsthm}

\usepackage{fullpage}

\usepackage[shortlabels]{enumitem}
\usepackage{capt-of}
\usepackage{hyperref}

\usepackage{tikz}
\usetikzlibrary{arrows,backgrounds,chains,decorations.pathreplacing,patterns,shapes, calc, positioning}

\allowdisplaybreaks

\newcommand{\SSYT}{\operatorname{SSYT}}
\newcommand{\wt}{\operatorname{wt}}
\newcommand{\area}{\operatorname{area}}
\newcommand{\dinv}{\operatorname{dinv}}
\newcommand{\tdinv}{\operatorname{tdinv}}
\newcommand{\pdinv}{\operatorname{pdinv}}
\newcommand{\maxtdinv}{\operatorname{maxtdinv}}
\newcommand{\comp}{\operatorname{comp}}
\newcommand{\PF}{\operatorname{PF}}
\newcommand{\Pref}{\operatorname{Pr}\nolimits}
\newcommand{\touch}{\operatorname{touch}}
\newtheorem{theorem}{Theorem}[section]
\newtheorem{lemma}[theorem]{Lemma}
\newtheorem{proposition}[theorem]{Proposition}
\newtheorem{corollary}[theorem]{Corollary}

\theoremstyle{definition}
\newtheorem{definition}[theorem]{Definition}
\newtheorem{example}[theorem]{Example}

\theoremstyle{remark}
\newtheorem{remark}[theorem]{Remark}

\title{The $(km,kn)$-square paths theorem}

\author{Dun Qiu}
\address{Center for Combinatorics, LPMC, Nankai University, Tianjin 300071, P. R. China}
\email{qiudun@nankai.edu.cn}
\thanks{Dun Qiu is supported in part by the National Natural Science Foundation of China (No. 12671390),
the Natural Science Foundation of Tianjin (No. 24JCZDJC01390), and the Fundamental Research Funds for the Central Universities (No. 105-63263093).
The authors thank Alessandro Iraci for his helpful remarks that clarified the sign conventions in the statements of Theorem~\ref{thmmain} and Theorem \ref{cor:fall-decorated}.}

\author{Minhao Zhang}
\address{Center for Combinatorics, LPMC, Nankai University, Tianjin 300071, P. R. China}
\email{carpentercheungmath@gmail.com}
\thanks{}

\date{\today}

\subjclass[2020]{Primary 05E05; Secondary 05A19, 05E10.}
\keywords{Symmetric functions, square paths theorem, $(km,kn)$-preference functions,
elliptic Hall algebra, row strict tableaux}

\begin{document}

\begin{abstract}
For positive integers $k,m,n$ with $\gcd(m,n)=1$, we prove a
$(km,kn)$-square paths theorem: the symmetric function
$(-1)^{k-1}\frac{[km]_q}{[k]_q}p_k[-MX^{m,n}]\cdot1$ is a weighted sum over
$(km,kn)$-preference functions. This proves a corrected form of the conjecture
of Iraci, Pagaria, Paolini, and Vanden Wyngaerd and generalizes the square paths
theorem of Sergel. We encode preference functions by row strict tableaux and
closed walks on a set of states, compare them with parking functions for
each value of the touch statistic,
and apply Mellit's compositional $(km,kn)$-shuffle theorem. Finally, we
establish a fall Delta square theorem conjectured by Iraci, Pagaria, and Paolini.
\end{abstract}

\maketitle

\section{Introduction}
The \emph{nabla operator} $\nabla$, introduced by Bergeron and Garsia
\cite{BG99} and studied further by Bergeron, Garsia, Haiman, and
Tesler~\cite{BGHT99}, acts diagonally on the modified Macdonald polynomials in
$\Lambda$. It is closely tied to the space of \emph{diagonal
harmonics}, a bigraded $S_n$-module studied by Haiman~\cite{Hai94}. Garsia and Haiman \cite{GH96}
conjectured that its bigraded Frobenius characteristic is $\nabla e_n$,
where $e_n$ is the $n$th elementary symmetric function. Haiman proved
this conjecture~\cite{Hai02}.

The \emph{shuffle theorem}, conjectured by Haglund, Haiman, Loehr,
Remmel, and Ulyanov \cite{HHLRU05}, expresses $\nabla e_n$ as a
weighted sum over parking functions. Grouping the terms by their
underlying Dyck paths also gives a positive sum of LLT polynomials.
Haglund, Morse, and Zabrocki~\cite{HMZ12} introduced the symmetric
functions $C_\alpha[X;q]$ and formulated the compositional shuffle
conjecture. Carlsson and Mellit~\cite{CM18} proved that conjecture
and, consequently, the shuffle theorem.

Gorsky and Negu\textcommabelow{t} \cite{GN} formulated the rational shuffle
conjecture. Bergeron, Garsia, Leven, and Xin~\cite{BGLX16}
then proposed the compositional $(km,kn)$-shuffle conjecture, which
gives a parking function formula for
$C_\alpha[-MX^{m,n}]\cdot1$, where $M=(1-q)(1-t)$. The operators arise from the elliptic
Hall algebra action studied by Feigin and Tsymbaliuk~\cite{FT} and
Schiffmann and Vasserot~\cite{SV}. In the reformulation of
Blasiak et al.~\cite[Lemma~3.4.1]{path},
\[
C_\alpha[-MX^{m,1}]\cdot 1=\nabla^m C_\alpha[X;q].
\]
Mellit~\cite{Mel21} proved the compositional $(km,kn)$-shuffle
conjecture.

Loehr and Warrington~\cite{LW08} conjectured a combinatorial
formula for $\nabla^m s_\lambda$, proved by Blasiak et al.~\cite{dens}.
Earlier, Loehr and
Warrington~\cite{LW07} had conjectured a preference function formula
for $(-1)^{n-1}\nabla p_n$. Sergel~\cite{Ser17} proved this
conjecture, establishing the square paths theorem.

\begin{theorem}[{Sergel~\cite[Theorem~4.5]{Ser17}}]
For every positive integer $n$,
\[
(-1)^{n-1}\nabla p_n
=
\sum_{\pi\in\Pref_{n,n}}
q^{\dinv(\pi)}t^{\area(\pi)}x_\pi.
\]
\end{theorem}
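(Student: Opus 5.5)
We plan to obtain Sergel's identity as the case $k=n$, $m=n=1$ of the general strategy of this paper: write $(-1)^{n-1}\nabla p_n$ as an elliptic Hall algebra operator, namely $(-1)^{n-1}\frac{[n]_q}{[n]_q}p_n[-MX^{1,1}]\cdot 1$. Then we compare the preference function side with the parking function side of the compositional shuffle theorem, touch value by touch value. Concretely, by \cite[Lemma~3.4.1]{path} and the plethystic expansion $p_n[X]=\sum$ of hook Schur functions, we will first rewrite
\[
(-1)^{n-1}\nabla p_n=\sum_{\alpha\models n} c_\alpha(q)\,\nabla C_\alpha[X;q],
\]
where $c_\alpha(q)$ are explicit polynomials in $q$. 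This uses that $\{C_\alpha\}$ generates and that $e_n=\sum_\alpha C_\alpha$. A cleaner route is the identity expressing $p_n$ through $C_\alpha$ with coefficients depending only on the first part $\alpha_1$, coming from $(-1)^{n-1}p_n=\sum_{r}[r]_q\,(\ldots)$. The Carlsson--Mellit compositional shuffle theorem \cite{CM18} then gives $\nabla C_\alpha$ as the sum over parking functions whose Dyck path touches the diagonal according to $\alpha$.

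The combinatorial half is a weight-preserving comparison. We encode a preference function $\pi\in\Pref_{n,n}$ (a labeled square path) by its cyclic structure. Cut the square path at a suitable point, for instance at the lowest diagonal it reaches, or equivalently at the position of maximal ``depth''. This produces a Dyck path together with a shift parameter. We then group preference functions by the statistic $\touch$ (the number of returns to the minimal diagonal) and show the following: for each fixed touch composition $\alpha$, the generating function of preference functions equals $c'_\alpha(q)$ times the generating function of parking functions with touch composition $\alpha$. Here $c'_\alpha(q)$ accounts for the choice of rotation, whose contribution to $\dinv$ is a $q$-integer shift. Encoding the labels as row strict tableaux and the rotations as closed walks on a finite set of states lets us compute the $q$-factor: the walk's states record how $\dinv$ changes as the cut point moves past each touch point.

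Finally we match coefficients, checking that $c'_\alpha(q)=c_\alpha(q)$ so that both sides agree term by term after summing over $\alpha$. The main obstacle is the second step: showing that moving the cut point changes $\dinv$ by exactly the amount predicted, uniformly over labelings, including the delicate case where cars sit on the minimal diagonal with equal labels. We would first try a direct inductive argument on the number of touch points, rotating one block at a time and tracking the dinv pairs that cross the cut. If that fails, we would fall back on the transfer-matrix (closed walk) formulation to sum the weights algebraically.
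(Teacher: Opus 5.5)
The architecture of your outline is right, and the algebraic half is sound. Besides the Can--Loehr expansion the paper uses, the expansion you allude to also holds:
$(-1)^{n-1}p_n=\sum_{r}[r]_q\,C_r(q)e_{n-r}=\sum_{\alpha\vDash n}[\alpha_1]_q\,C_\alpha[X;q]$.
To check it, expand $C_r(q)e_{n-r}$ in the products $h_se_{n-s}$ and compare with $p_n=\sum_s(-1)^{n-s}s\,h_se_{n-s}$.

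\textbf{The gap.} The gap is the combinatorial half. It carries all the content of the theorem, and you only describe it. Moreover, the mechanism you propose is false: rotating at a base touch does not shift $\dinv$ by an amount independent of the labels.

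Here is a counterexample. Take $\pi=\mathrm{NEENNE}\in\Pref_{3,3}$ with label $w_1$ in column $0$ and $w_2<w_3$ in column $2$. Its only base touch is $(2,1)$. Cutting there gives the Dyck path $\mathrm{NNENEE}$, with $w_2<w_3$ in column $0$ and $w_1$ in column $1$. From $\dinv=\tdinv+\#\{i:a_i<0\}$ one gets $\dinv(\pi)=1+[w_1<w_3]+[w_2<w_1]$, while the rotated parking function has $\dinv=[w_3<w_1]$. For $(w_1,w_2,w_3)=(1,2,3),(2,1,3),(3,1,2)$ the shift is $2,3,1$ respectively. Only the sums over all labelings are proportional (here by $q^2$). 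The reason is that North steps on opposite sides of the cut swap their order, which changes their attack type or reverses the label comparison.

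Consequences for your plan:
\begin{itemize}
\item An induction that tracks crossing pairs labeling by labeling cannot close.
\item A path-by-path comparison would need a cyclic-symmetry statement for the symmetric functions $N^\sigma_{\beta/\alpha}$, which you neither state nor prove.
\item Your final step ``match $c'_\alpha=c_\alpha$'' has the same problem. The $C_\alpha$ are linearly dependent; for example, $C_{(2,1)}[X;q]=q\,C_{(1,2)}[X;q]$. So you would have to prove a per-composition (really per-path) identity with coefficient exactly $[\alpha_1]_q$. That is stronger than what the theorem needs.
\item You would also first have to define a touch composition for non-Dyck paths. Cutting at the last base touch that begins a North step does give exactly $\alpha_1(D)$ preference functions over each Dyck path $D$.
\end{itemize}

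\textbf{How the paper avoids this.} The paper proves only the coarser statement that the Can--Loehr coefficients $[n]_q/[\ell]_q$ require. Namely, $[\ell]_q$ times the sum of $q^{\dinv(\pi)}t^{\area(\pi)}x_\pi$ over $\pi\in\Pref_{n,n}$ with $\touch(\pi)=\ell$ equals $[n]_q$ times the same sum over $\PF_{n,n}$. It never compares individual paths. Instead:
\begin{itemize}
\item It groups the closed-walk words by \emph{profile}, the multiset of letters $U_{r,i},V_{r,i},E_r$. A profile fixes area and monomial but contains many different paths.
\item Within a profile it splits $h_\sigma=J+I$. The exchange lemma $ACB\mapsto BCA$, together with the potential $G_{r,i}$, shows that $J$ depends only on the starting level, with $\eta_a-\eta_0=G_{a,1}-G_{0,1}$.
\item It evaluates $\sum q^{I}$ by a $q$-weighted BEST theorem: queues plus rooted spanning trees, with $T_{r,i}=K_{\mathfrak p}\,q^{\sum_{j\geq1}g_{r+jm,i}}$.
\item This yields $[\ell]_q\phi_a=q^{P_a}[\nu_a]_q\phi_0$. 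Summing over $a$ produces the factor $[km]_q$.
\end{itemize}
Your fallback to closed walks is exactly where these ideas are needed: profile grouping, invariance of $J$, and the tree enumeration of $I$. None of them appears in the proposal.
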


Iraci, Pagaria, Paolini, and Vanden Wyngaerd~\cite[Conjecture~4.2]{IPPVW23}
conjectured, in our notation, that for all positive integers $k,m,n$ with
$\gcd(m,n)=1$,
\[
\frac{[km]_q}{[k]_q}\,p_{km,kn}
=\sum_{\pi\in\operatorname{LRP}(km,kn)}q^{\dinv(\pi)}t^{\area(\pi)}x_\pi,
\]
where $[r]_q:=(1-q^r)/(1-q)$, $p_{km,kn}$ is their rational analogue of $\nabla p_n$, 
and $\operatorname{LRP}(km,kn)$, their set of labeled rectangular
paths, is our $\Pref_{km,kn}$.
They~\cite[Theorem~6.1]{IPPVW23} proved it when $k=1$.

The conjecture needs a sign correction, which is made explicit in
Section~\ref{subsec:schiffmann}. In the notation of that section, the
corrected statement is as follows.

\begin{theorem}[= Theorem~\ref{thmpn}]\label{thmmain}
For all positive integers $k,m,n$ with $\gcd(m,n)=1$,
\begin{equation}
(-1)^{k-1}\frac{[km]_q}{[k]_q}\,
p_k[-MX^{m,n}]\cdot 1
=
\sum_{\pi\in\Pref_{km,kn}}
q^{\dinv(\pi)}t^{\area(\pi)}x_\pi.
\end{equation}
\end{theorem}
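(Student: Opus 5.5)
The plan is to reduce Theorem~\ref{thmmain} to Mellit's compositional $(km,kn)$-shuffle theorem, in the spirit of Sergel's proof of the square paths theorem. First, expand the left-hand side in the $C_\alpha$ basis. Since $p_k$ is a symmetric function of degree $k$, there is a classical expansion of $p_k[X]$ (up to the sign $(-1)^{k-1}$) as a $q$-weighted sum of compositional functions:
\[
(-1)^{k-1}p_k[X]=\sum_{\alpha\models k}c_\alpha(q)\,C_\alpha[X;q].
\]
This follows from the Haglund--Morse--Zabrocki relations among the $C_\alpha$. One candidate form of the coefficients is powers of $q$ together with factors $(1-q)$ indexed by the parts of $\alpha$. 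Applying the operator $f\mapsto f[-MX^{m,n}]\cdot1$, which is linear, Mellit's theorem turns each $C_\alpha[-MX^{m,n}]\cdot1$ into a sum over $(km,kn)$-parking functions whose underlying path touches the diagonal exactly according to $\alpha$. After multiplying by $[km]_q/[k]_q$, the left side therefore becomes an explicit $q$-weighted sum over $(km,kn)$-parking functions. The weight depends only on the touch composition $\touch(\pi)$.

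Second, on the combinatorial side, encode each preference function $\pi\in\Pref_{km,kn}$ via its row strict tableau. Here the labeled rectangular path may cross the diagonal, as in Sergel's square paths. Then decompose the path into a closed walk on a finite set of states, where the states record the offset from the diagonal modulo the $(m,n)$ period. A cyclic rotation (conjugation) of the walk moves $\pi$ to a path lying weakly above the diagonal, that is, to a parking function. This rotation changes $\area$ in a controlled way and changes $\dinv$ by an amount depending on the rotation point. Grouping preference functions by the value of $\touch$ of the associated parking function, each parking function with $j$ touch points arises from a family of preference functions. The $q$-generating function of the dinv shifts over this family should be the factor needed. Concretely, for each touch value the fibre should contribute $[km]_q/[k]_q$ times the $c_\alpha$-weight, after a sign and $q$-power normalization.

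Third, compare the two sides touch value by touch value. Sum the identity of the second step over parking functions with given touch composition and match it with the first step. This yields
\[
\sum_{\pi\in\Pref_{km,kn}}q^{\dinv(\pi)}t^{\area(\pi)}x_\pi=(-1)^{k-1}\frac{[km]_q}{[k]_q}\sum_{\alpha\models k}c_\alpha(q)\sum_{\touch=\alpha}q^{\dinv}t^{\area}x,
\]
and the right-hand side is the left side of Theorem~\ref{thmmain} by Mellit's theorem.

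The main obstacle is the second step: making the rotation well defined when $k>1$ and computing the dinv change exactly. When $k>1$ the walk can return to the diagonal state $k$ times, so the rotations have periodic symmetries. The factor $[km]_q/[k]_q$ instead of $[km]_q$ reflects exactly this. I would first try to count the rotations as $km$ cyclic shifts of a word of length $km$ modulo a $k$-fold period, and show that the dinv increments form the sequence $0,1,\dots,km-1$ with multiplicity collapsed by $k$. If that fails, I would try a sign-reversing argument on the tableaux to produce the alternating coefficients $c_\alpha$.
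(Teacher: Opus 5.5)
Your outer reduction is the paper's, once Step~1 is made precise. The expansion to use is Can--Loehr's $(-1)^{k-1}p_k=\sum_{\ell=1}^{k}\frac{[k]_q}{[\ell]_q}E_{k,\ell}$, together with $E_{k,\ell}=\sum_{\ell(\alpha)=\ell}C_\alpha$. Its coefficients depend only on the number of parts; your ``candidate'' with factors indexed by the parts is not it. The $[k]_q$ then cancels, and what has to be proved is, for each $\ell$,
\[
[\ell]_q\sum_{\substack{\pi\in\Pref_{km,kn}\\ \touch(\pi)=\ell}}q^{\dinv(\pi)}t^{\area(\pi)}x_\pi
=[km]_q\sum_{\substack{\pi\in\PF_{km,kn}\\ \touch(\pi)=\ell}}q^{\dinv(\pi)}t^{\area(\pi)}x_\pi .
\]
So the factor to produce is $[km]_q/[\ell]_q$, with $\ell$ the touch number, not $[km]_q/[k]_q$. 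At $q=1$ this says only that each parking function has $km$ rotation positions (cuts after East steps), and each preference function of touch $\ell$ is reached from exactly $\ell$ of them. Periodicity of the word plays no role, and ``increments $0,1,\dots,km-1$ collapsed by $k$'' is the wrong target.

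The genuine gap is Step~2. Label-preserving rotation is a valid correspondence of sets and preserves area. However, $\dinv$ does not shift by an amount depending only on the rotation point, and the rotation fibres do not individually carry the weight $\frac{[km]_q}{[\ell]_q}q^{\dinv}$. Take $k=3$, $m=n=1$, the Dyck path $\mathrm{NNENEE}$ and its rotations $\mathrm{NEENNE}$ and $\mathrm{ENNENE}$, all of touch $1$ and area $2$. With labels $(1,2,3)$ on the North steps of $\mathrm{NNENEE}$, the three dinv values are $1,2,2$; with labels $(1,3,2)$ they are $0,3,1$. The fibre sums $q+2q^2$ and $1+q+q^3$ are not $q[3]_q$ and $[3]_q$; only their total $1+2q+2q^2+q^3=[3]_q(1+q)$ is right. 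So the identity holds only after aggregating over different labelled paths, and your proposal lacks a mechanism for that aggregation.

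In the paper, the states of the walk are pairs (level, label currently inspected), not offsets alone, so a closed walk in the letters $U_{r,i},V_{r,i},E_r$ encodes the labelled tableau. Walks are grouped by profile, and $h_\sigma-I$ is shown to be constant for a fixed profile and initial state, via an exchange lemma using the potential $G$. The sum $\sum q^{I}$ is then computed by a $q$-weighted BEST theorem. This gives $[\ell]_q\phi_a=q^{P_a}[\nu_a]_q\phi_0$, which sums over $a$ to $[km]_q\phi_0$. Your sign-reversing fallback is not specified enough to replace this.
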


When $m=n=1$, Theorem~\ref{thmmain} specializes to Sergel's
square paths theorem.

The central step is the following refinement of
Theorem~\ref{thmmain} by the number of touches.
\begin{theorem}[= Theorem~\ref{thmEnkPr}]
   For positive integers $k,m,n$ with $\gcd(m,n)=1$ and
   $1\leq\ell\leq k$, we have
  \[
\frac{[km]_q}{[\ell]_q}E_{k,\ell}[-MX^{m,n}]\cdot1
=
\sum_{\substack{\pi\in\Pref_{km,kn}\\\touch(\pi)=\ell}}
q^{\dinv(\pi)}t^{\area(\pi)}x_\pi.
\]
\end{theorem}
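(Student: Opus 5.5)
We need to plan a proof of the touch refinement. Here $E_{k,\ell}$ is presumably the standard expansion $e_k[X\frac{1-z}{1-q}]=\sum_\ell \frac{(z;q)_\ell}{(q;q)_\ell}E_{k,\ell}$, and $E_{k,\ell}=\sum_{\alpha\models k,\ \ell(\alpha)=\ell} C_\alpha$ up to the usual $q$-twist.

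The plan is to reduce the preference-function side to the parking-function side of Mellit's compositional $(km,kn)$-shuffle theorem, one touch value at a time. First I would invoke the standard identity $E_{k,\ell}=\sum_{\alpha\models k,\ \ell(\alpha)=\ell} C_\alpha[X;q]$, valid for the Haglund--Morse--Zabrocki operators. Applying the elliptic Hall algebra substitution term by term gives
\[
E_{k,\ell}[-MX^{m,n}]\cdot 1=\sum_{\alpha\models k,\ \ell(\alpha)=\ell} C_\alpha[-MX^{m,n}]\cdot1 .
\]
Mellit's theorem then turns each summand into a sum over $(km,kn)$-parking functions whose path touches the diagonal exactly at the lattice points prescribed by $\alpha$. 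So the right-hand side becomes the weighted sum
\[
\sum_{\substack{P\in\PF_{km,kn}\\ \touch(P)=\ell}} q^{\dinv(P)}t^{\area(P)}x_P .
\]
The theorem is therefore equivalent to
\[
\sum_{\substack{\pi\in\Pref_{km,kn}\\ \touch(\pi)=\ell}} q^{\dinv(\pi)}t^{\area(\pi)}x_\pi=\frac{[km]_q}{[\ell]_q}\sum_{\substack{P\in\PF_{km,kn}\\ \touch(P)=\ell}} q^{\dinv(P)}t^{\area(P)}x_P .
\]

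Next I would make this comparison combinatorial. I would encode each preference function, with its labels read along the path, by a row strict tableau together with a closed walk on a finite set of states. A state records the relative position to the shifted diagonal, modulo the periodicity of the $km\times kn$ rectangle. A parking function then corresponds to a walk that stays weakly above a base state. A general preference function is a cyclic rotation of the square-path picture, i.e. a walk allowed to dip below. The standard trick, as in Sergel's proof, is to cut the closed walk at its lowest points and rotate so that it becomes a parking-function walk. Since the walk returns to its minimum exactly $\ell$ times per period, each parking function with $\touch=\ell$ arises from $km$ starting positions. These come in $\ell$ equivalent classes under rotating by a full touch block.

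Along this process I would track the statistics. Area should be invariant, since it is measured relative to the lowest diagonal translate. Dinv should shift by exactly $0,1,\dots,km-1$ across the rotation positions, with the overcounting corrected by $\ell$ cyclic equivalences. This is what should produce $[km]_q/[\ell]_q$ rather than $[km]_q$. Concretely, I expect a bijection
\[
\{\pi\in\Pref_{km,kn}:\touch(\pi)=\ell\}\;\longleftrightarrow\;\{(P,j): P\in\PF_{km,kn},\ \touch(P)=\ell,\ j\in\text{fundamental domain}\},
\]
with $\dinv(\pi)=\dinv(P)+\delta(j)$. If the offsets do not form an exact fundamental domain, I would instead prove the generating function identity $[\ell]_q\sum_{\pi} q^{\dinv(\pi)}\cdots=[km]_q\sum_{P}\cdots$.

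The main obstacle is the dinv bookkeeping under rotation in the rational $(km,kn)$ setting. There, dinv involves the slope-$n/m$ diagonals and "temporary dinv" contributions between cars in different rows. Rotating changes which pairs count, and car labels in equal rows interact with the row strict tableau constraint. My first attempt would be to define the states so that dinv is a sum of local weights along walk edges. Rotating by one step would then change dinv by a computable amount: $+1$ except when passing a touch point, where it resets by $-(km/\ell)$-type corrections. Summing these local changes yields the geometric series needed. The case $k=1$, proved in \cite{IPPVW23}, serves as a sanity check.
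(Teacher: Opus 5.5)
Your reduction is the paper's: Proposition~\ref{propEtoC}, which holds exactly with no $q$-twist, together with Mellit's theorem reduces the statement to \eqref{eqmaincomb}. The gap is in the combinatorial comparison.

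\textbf{Rotation orbits do not carry the identity.} Cyclic rotation at column starts preserves area and touch, and it gives the right count at $q=1$. But dinv is not controlled along rotation orbits, so the orbit-by-orbit identity your bijection would produce is false. Take $m=n=1$, $k=3$, and the parking function with path $\mathrm{NNENEE}$, labels $1,2$ in column $0$ and $3$ in column $1$. Its rotations $\mathrm{NNENEE},\mathrm{NEENNE},\mathrm{ENNENE}$, with labels carried along, have dinv $1,2,2$ rather than $1,2,3$. With labels $1,3$ and $2$ instead, the rotations have dinv $0,3,1$ rather than $0,1,2$. Only the union of the two orbits satisfies the identity: $1+2q+2q^2+q^3=[3]_q(1+q)$.

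So moving the first column to the end changes dinv by a label-dependent amount: $+1$ in the first case and $+3$ in the second. Hence no rule of the form ``$+1$ except at touch points'' can hold. Nor can dinv be written as a sum of local edge weights, because it depends on the relative order of letters leaving a common state.

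\textbf{No fundamental-domain bijection.} Independently, a bijection onto pairs $(P,j)$ with $j$ in a fixed fundamental domain and additive offsets is impossible whenever $\ell\nmid km$. In that case $[km]_q/[\ell]_q$ is not a polynomial; for example, take $m=1$, $k=3$, $\ell=2$.

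\textbf{What is missing: the right aggregation.} In the paper a state is a pair (level, label currently inspected). A preference function becomes a closed walk in the letters $U_{r,i},V_{r,i},E_r$, and dinv becomes the pair statistic $h_\sigma(w)$ via the RST triples (Propositions~\ref{propdinvh} and~\ref{propshift}). The two parking functions above have the same profile, i.e.\ the same multiset of letters. The identity is proved profile by profile, over \emph{all} closed walks with that profile and a given initial state $(a,1)$, not over rotation orbits.

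The argument has three parts.
\begin{enumerate}
\item Write $h_\sigma=J+I$ and show that $J$ is constant on $\mathfrak W_a(\mathfrak p)$, using the exchange $ACB\mapsto BCA$ and a potential $G$ that telescopes along closed walks.
\item Count $q^{I}$ by a $q$-weighted BEST theorem. Its rooted-tree sums are $K_{\mathfrak p}(q)\,q^{\sum_{j\geq1}g_{r+jm,i}}$.
\item Combine these to get $[\ell]_q\phi_a=q^{P_a}[\nu_a]_q\phi_0$; summing over $a$ produces $[km]_q$.
\end{enumerate}
In the example, $\nu_0=1$ and $\nu_1=2$, so $\phi_1=q[2]_q\phi_0$, with $\phi_0=1+q$.

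Your fallback of proving the generating-function identity directly needs exactly such a mechanism, and the proposal does not supply one.
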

Mellit~\cite[Section~6]{Mel21} proved the compositional
$(km,kn)$-shuffle theorem (Theorem~\ref{kmknshuffle}), which identifies
$E_{k,\ell}[-MX^{m,n}]\cdot1$ with the corresponding sum over
parking functions (Theorem~\ref{thmEnkPF}). Thus the combinatorial
part of the proof is the comparison
\[
[\ell]_q
\sum_{\substack{\pi\in\Pref_{km,kn}\\\touch(\pi)=\ell}}
q^{\dinv(\pi)}t^{\area(\pi)}x_\pi
=
[km]_q
\sum_{\substack{\pi\in\PF_{km,kn}\\\touch(\pi)=\ell}}
q^{\dinv(\pi)}t^{\area(\pi)}x_\pi.
\tag{\ref{eqmaincomb}}
\]
Once this is established, the expansion of the power sum by Can and
Loehr~\cite{can} (Proposition~\ref{propptoEnk}) gives
Theorem~\ref{thmmain} by summing over $\ell$.

We also prove the fall Delta square theorem conjectured by Iraci,
Pagaria, and Paolini~\cite[Conjecture~6.9]{IPP26}. It is a fall version
of the Delta square conjecture of D'Adderio, Iraci, and Vanden
Wyngaerd~\cite{DIVWsq}, the square paths analogue of the Delta conjecture
of Haglund, Remmel, and Wilson~\cite{HRW18}. In the course of
the proof, we establish a corrected form of their operator identity
\cite[Conjecture~6.10]{IPP26}, stated below as
Proposition~\ref{propDelta}. We write
$\operatorname{LRP}(w,h)_{\ast d}$ for their labeled paths from
$(0,0)$ to $(w,h)$, ending with an East step and carrying $d$
decorated falls; a \emph{fall} is an East step that is immediately
followed by another East step. We use the area and dinv statistics of
Iraci et al.~\cite[Section~3]{IPP26}; for the square paths in the next
theorem, they are recalled in Section~\ref{subsec:delta}.
\begin{theorem}\label{thmDelta}
  For $k\geq1$ and $d\geq0$, we have
  \begin{equation}\label{eqDelta}
    (-1)^{k-1}\Theta_{e_d}\nabla p_k
    =\sum_{\pi\in\operatorname{LRP}(k+d,k+d)_{\ast d}}
      q^{\dinv(\pi)}t^{\area(\pi)}x_\pi.
  \end{equation}
\end{theorem}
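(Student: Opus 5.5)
We would prove Theorem~\ref{thmDelta} by the same strategy as Theorem~\ref{thmmain}: refine both sides by the touch statistic, reduce the symmetric function side to a known compositional formula, and carry out the combinatorial comparison between preference functions and parking functions level by level in $\ell$.

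\textbf{Operator side.} Expand $(-1)^{k-1}\nabla p_k$ through the Can--Loehr expansion (Proposition~\ref{propptoEnk}) as
\[
(-1)^{k-1}\nabla p_k=\sum_{\ell=1}^{k}\frac{[k]_q}{[\ell]_q}\nabla E_{k,\ell}.
\]
Applying $\Theta_{e_d}$ termwise leaves $\Theta_{e_d}\nabla E_{k,\ell}$ to identify. For this we use the corrected operator identity, Proposition~\ref{propDelta}. We expect it to express $\Theta_{e_d}\nabla E_{k,\ell}$ in terms of $\Delta'_{e_{k+d-\ell'}}$-type operators applied to $E_{k+d,\ell'}$ with explicit $q$-binomial coefficients. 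Its right side is then given by the compositional Delta theorem for decorated rises/falls (D'Adderio--Mellit), which we take as available, or by Mellit's compositional shuffle (Theorem~\ref{kmknshuffle}) in its decorated form. Either way, $\Theta_{e_d}\nabla E_{k,\ell}$ becomes a weighted sum over decorated parking functions of size $(k+d)\times(k+d)$ with $\touch=\ell$.

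\textbf{Combinatorial side.} Split $\operatorname{LRP}(k+d,k+d)_{\ast d}$ by touch. We would extend the row strict tableau and closed-walk encoding used for \eqref{eqmaincomb} so that it records decorated falls, and prove
\[
[\ell]_q\sum_{\substack{\pi\in\operatorname{LRP}(k+d,k+d)_{\ast d}\\ \touch(\pi)=\ell}} q^{\dinv}t^{\area}x_\pi
=[k]_q\sum_{\substack{\text{decorated PF}\\ \touch=\ell}} q^{\dinv}t^{\area}x_\pi .
\]
The key observation is that decorations on falls are rigid under the cyclic rotation of the closed walk that produces the factor $[k]_q/[\ell]_q$. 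A decorated fall is an $EE$ pair, and we would show that the dinv loss from decoration in the IPP convention is invariant under rotation of the starting point. The rotation argument for \eqref{eqmaincomb} should therefore go through with the decorated dinv. Note that the factor becomes $[k]_q$, not $[k+d]_q$, because only the $k$ undecorated vertical "blocks" participate in the rotation.

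\textbf{Main obstacle.} The hardest step is establishing Proposition~\ref{propDelta} in a form that matches the touch refinement exactly, and checking its sign and $q$-factor conventions. We would first try to prove it via the plethystic identity $\Theta_{e_d}=$ (generating series in $\Pi$, $\nabla$, $e_d^*$) together with the known relation $\Theta_{e_d}\nabla e_{n-d}=\Delta'_{e_{n-d-1}}e_n$ at $\ell=k$. We would then propagate to general $\ell$ by the $E_{k,\ell}$ recursion, induct on $k-\ell$, and verify small cases by computer. The decorated dinv invariance under rotation is the secondary difficulty; it must be checked case by case, according to whether the rotation cut point lies inside a fall.
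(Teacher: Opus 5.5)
There is a genuine gap on both sides of your plan.

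On the operator side, Proposition~\ref{propDelta} does not say what you expect. It is the identity $\Theta_{e_d}\nabla p_k=s^{\perp}_{(k-1)^d}\bigl(p_k[-MX^{1,d+1}]\cdot1\bigr)$ for the full power sum. It says nothing about $\Theta_{e_d}\nabla E_{k,\ell}$ or about $\Delta'$ operators. The touch-refined input you need is $\Theta_{e_d}\nabla E_{k,\ell}$ written as a sum over fall-decorated parking functions with $\ell$ touches. The compositional Delta theorem of D'Adderio and Mellit does not supply this. It treats $\Theta_{e_d}\nabla C_\alpha$ on Dyck paths with decorated \emph{rises}, using its own area and dinv. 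Nothing in your plan converts that to decorated falls with the statistics of Iraci, Pagaria, and Paolini. There the area ignores decorated East steps and the dinv carries the correction $-\#\{i\in\mathcal D:v_i\leq0\}$. Theorem~\ref{kmknshuffle} concerns undecorated parking functions only.

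Your suggested route to the operator identity also fails as stated. The relation $\Theta_{e_d}\nabla e_k=\Delta'_{e_{k-1}}e_{k+d}$ concerns $e_k=\sum_\ell E_{k,\ell}$, not the single term $\ell=k$. No recursion in $\ell$ is specified, and computer checks are not a proof.

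On the combinatorial side, the extension of the closed-walk rotation is only asserted. Proposition~\ref{propdinvh} identifies the undecorated dinv with $h_\sigma(S)$. You give no tableau or word encoding of the decorated statistics, and no definition of touch for decorated paths. You give no proof that decorations are ``rigid'' under rotation. You also give no mechanism that yields $[k]_q$ rather than $[k+d]_q$, which is the factor the rotation would naturally produce from the $k+d$ columns of a $(k+d)\times(k+d)$ path.

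The paper needs no touch refinement here. Corollary~\ref{cor:fall-decorated}, with $(m,n)=(1,1)$ and $r=d$, uses the sign-reversing skewing argument of Iraci, Pagaria, and Paolini. It rewrites the sum over $\operatorname{LRP}(k+d,k+d)_{\ast d}$ as $s^\perp_{(k-1)^d}$ applied to the undecorated sum over $\Pref_{k,k(d+1)}$. Theorem~\ref{thmmain} evaluates that sum with factor $[k]_q/[k]_q=1$. This is the real reason behind your heuristic that only $k$ blocks rotate: the $(k,k(d+1))$ rectangle has $k$ columns.

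The operator identity, Proposition~\ref{propDelta}, is then proved with Negu\textcommabelow{t} elements, in three steps.
\begin{enumerate}
\item Expand $(-1)^{k-1}p_k[-MX^{1,d+1}]\cdot1$ as in \eqref{eqnegut}.
\item Push $s^\perp_{(k-1)^d}$ through each $D_{\gamma^{(i)}+d\mathbf1}$ using the commutation rule \eqref{eqDIIP} and Jacobi--Trudi (Proposition~\ref{propnegut}).
\item Match the resulting $\sum_{|\beta|=d}D_{\gamma^{(i)}+\beta}\cdot1$ with $\Theta_{e_d}\bigl(D_{\gamma^{(i)}}\cdot1\bigr)$ via the Theta word formula, Theorem~\ref{thmTheta}.
\end{enumerate}
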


The proof has two parts. First, the combinatorial skewing argument of
Iraci et al.~\cite[Section~5.2]{IPP26}, applied to
Theorem~\ref{thmmain} with $(m,n)=(1,d+1)$, expresses the sum in
\eqref{eqDelta} as a Schur skewing of
$p_k[-MX^{1,d+1}]\cdot1$. We then prove Proposition~\ref{propDelta},
the corrected form of their operator identity~\cite[Conjecture~6.10]{IPP26},
by expanding the power sum in
Negu\textcommabelow{t} elements and applying the skewing and Theta identities of
D'Adderio, Interdonato, Iraci, and Pagaria
\cite[Theorems~4.9 and 5.14]{DIIP26}. This identifies the skewed
expression with the left side of \eqref{eqDelta}.

The paper is organized as follows. Section~\ref{sec:prelim} fixes the conventions for symmetric functions and combinatorial objects used
in this paper. Section~\ref{sec:rst} associates a row strict tableau to
each preference function and identifies its area and dinv weights
(Proposition~\ref{propdinvh}). In Section~\ref{sec:main} we encode the tableaux
by closed walks on a set of states. Comparing walks that use the same
letters but start at different states proves
Proposition~\ref{propqcyc}, from which \eqref{eqmaincomb} follows.
Section~\ref{sec:main} also proves Theorem~\ref{thmDelta} by combining the
combinatorial skewing argument with the Negu\textcommabelow{t} element calculation.

\section{Preliminaries}\label{sec:prelim}
Section~\ref{subsec:symfun} fixes our conventions for symmetric functions and
plethystic substitution. For more details, see Macdonald~\cite{Mac95},
Haglund~\cite{Hag08}, and D'Adderio, Iraci, and Vanden
Wyngaerd~\cite{DIVW21}.
Section~\ref{subsec:comb} recalls $(km,kn)$-preference functions and their
statistics, following Iraci et al.~\cite{IPPVW23}.
Unlike the parking functions in Mellit's theorem, the preference
functions considered here may pass below the main diagonal.
Section~\ref{subsec:schiffmann} recalls the Schiffmann algebra action in the
reformulation of Blasiak et al.~\cite{Delta,path}.
Section~\ref{subsec:negut} introduces the Negu\textcommabelow{t} elements in the notation of
Blasiak et al. and records the skewing and Theta identities of
D'Adderio, Interdonato, Iraci, and Pagaria~\cite{DIIP26} needed for
Theorem~\ref{thmDelta}.

\subsection{Symmetric functions}\label{subsec:symfun}
A \emph{partition} is a weakly decreasing sequence
$\lambda=(\lambda_1,\ldots,\lambda_\ell)$ of positive integers.
Its size and length are
$|\lambda|:=\lambda_1+\cdots+\lambda_\ell$ and
$\ell(\lambda):=\ell$.
We write $\lambda\vdash n$ when $|\lambda|=n$, and denote the
conjugate partition by $\lambda'$. For integers $a,d\geq0$, we
write $(a)^d=(a^d)$ for the partition with $d$ parts equal to $a$;
we regard $(0)^d$ as the empty
partition, so $s_{(0)^d}=1$. A \emph{composition} of $n$
is a sequence of positive integers
$\alpha=(\alpha_1,\ldots,\alpha_\ell)$ with
$
|\alpha|:=\alpha_1+\cdots+\alpha_\ell=n.
$
We write $\alpha\vDash n$ and $\ell(\alpha):=\ell$.

Let $X=x_1+x_2+\cdots$, and let $\Lambda$ be the algebra of
symmetric functions in $X$ with coefficients in
$\mathbb Q(q,t)$. Its standard bases include the monomial,
complete homogeneous, elementary, power sum, and Schur bases,
denoted respectively by
$
\{m_\lambda\}_\lambda,
\{h_\lambda\}_\lambda,
\{e_\lambda\}_\lambda,
\{p_\lambda\}_\lambda,
\{s_\lambda\}_\lambda.
$
We use the standard algebra involution $\omega$ of $\Lambda$, given by
$\omega(p_r)=(-1)^{r-1}p_r$.
Let $\langle\cdot,\cdot\rangle$ be the Hall scalar product, for which
the Schur functions form an orthonormal basis.
For any $f\in\Lambda$, we denote by $f^\perp$ the adjoint of
multiplication by $f$ with respect to $\langle\cdot,\cdot\rangle$.
We recall the \emph{Jacobi--Trudi formula} and its dual form,
\begin{equation}\label{eqJT}
  s_\lambda = \det[h_{\lambda_i-i+j}]_{1\le i,j\le \ell(\lambda)},
  \qquad
  s_{\lambda'} = \det[e_{\lambda_i-i+j}]_{1\le i,j\le \ell(\lambda)}.
\end{equation}

We use plethystic substitution throughout the paper. For an expression $E$
in indeterminates, $p_r[E]$ is obtained by replacing each
indeterminate by its $r$th power and extending this operation to
sums and quotients. 
For $f\in\Lambda$, the expression $f[E]$ is obtained by writing
$f$ in the power sum basis and replacing each $p_r$ by $p_r[E]$.

Following Garsia and Haglund \cite{GHag}, define the symmetric
functions $E_{k,\ell}[X]$ by
\begin{equation}\label{eqdefEnk}
e_k\left[X\frac{1-z}{1-q}\right]
=
\sum_{\ell=1}^{k}
\frac{(1-z)(1-zq)\cdots(1-zq^{\ell-1})}
     {(1-q)(1-q^2)\cdots(1-q^\ell)}
E_{k,\ell}[X].
\end{equation}

Haglund, Morse, and Zabrocki~\cite{HMZ12} introduced the operators
$C_a(q)$, for $a\geq1$, by
\[
C_a(q)f[X]
:=
\left(-\frac{1}{q}\right)^{a-1}
\langle z^a\rangle
f\left[X-\frac{1-1/q}{z}\right]
\sum_{r\geq0}h_r[X]z^r,
\]
where $\langle z^a\rangle G(z)$ denotes the coefficient of
$z^a$ in the formal Laurent series $G(z)$. For a composition
$\alpha=(\alpha_1,\ldots,\alpha_\ell)$, set
\[
C_\alpha[X;q]
:=
C_{\alpha_1}(q)C_{\alpha_2}(q)\cdots
C_{\alpha_\ell}(q)(1).
\]
The functions $E_{k,\ell}$ collect the $C_\alpha$ with a
fixed number of parts.
\begin{proposition}[{Haglund, Morse, and Zabrocki~\cite[Corollary~5.5]{HMZ12}}]\label{propEtoC}
For $1\leq\ell\leq k$,
\begin{equation}\label{eqEtoC}
E_{k,\ell}
=
\sum_{\substack{\alpha\vDash k\\\ell(\alpha)=\ell}}
C_\alpha[X;q].
\end{equation}
\end{proposition}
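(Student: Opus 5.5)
The plan is to work with generating functions in $z$ and match coefficients. I will write $(z;q)_\ell:=(1-z)(1-zq)\cdots(1-zq^{\ell-1})$ and $(q;q)_\ell:=(1-q)\cdots(1-q^\ell)$. By the definition \eqref{eqdefEnk}, the statement is equivalent to the identity
\[
e_k\Bigl[X\tfrac{1-z}{1-q}\Bigr]=\sum_{\ell=1}^k\frac{(z;q)_\ell}{(q;q)_\ell}\sum_{\alpha\vDash k,\ \ell(\alpha)=\ell}C_\alpha[X;q],
\]
provided the polynomials $(z;q)_\ell$, $1\le \ell\le k$, are linearly independent over $\mathbb Q(q,t)$. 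They are, since they have distinct degrees in $z$. So it suffices to prove this single identity.

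I would prove it by induction on $k$, using the defining formula of $C_a(q)$. Set $F_k(z):=\sum_{\alpha\vDash k}\frac{(z;q)_{\ell(\alpha)}}{(q;q)_{\ell(\alpha)}}C_\alpha$. Splitting off the first part $a=\alpha_1$ gives $C_\alpha=C_a(q)C_\beta$ with $\beta\vDash k-a$. The ratio of $q$-Pochhammers changes in a controlled way: one has $(z;q)_{\ell}=(1-z)\,(zq;q)_{\ell-1}$ and $(q;q)_\ell=(1-q^\ell)(q;q)_{\ell-1}$. The factor $1/(1-q^\ell)$ depends on $\ell$, which is inconvenient. To handle it, I would instead work with the generating function $G_k(z)=\sum_\alpha (z;q)_{\ell(\alpha)}\,C_\alpha\cdot c_{\ell(\alpha)}$ for suitable normalizations $c_\ell$. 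The cleanest route is the known Haglund--Morse--Zabrocki approach: show that
\[
\sum_{\alpha\vDash k}C_\alpha[X;q]\,u^{\ell(\alpha)}
\]
has a plethystic closed form. Concretely, iterating $C_a$ amounts to repeatedly substituting $X\mapsto X-(1-1/q)/z_i$ and multiplying by $\Omega[z_iX]$. The composite of the $C_{\alpha_i}$ is then the coefficient of $z_1^{\alpha_1}\cdots z_\ell^{\alpha_\ell}$ in
\[
\prod_i\Omega[z_iX]\prod_{i<j}\Omega\!\left[-(1-1/q)z_j/z_i\right]
\]
times signs. This is the standard Hall--Littlewood-type vertex-operator product.

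Summing over all compositions with $\ell$ parts then becomes a constant-term identity. I would symmetrize, using that the product above is a Hall--Littlewood kernel; summing over $\alpha$ turns it into $q$-Hall--Littlewood functions $\tilde Q$ summed over partitions of length $\ell$. Then I would invoke the classical expansion
\[
e_k\Bigl[X\tfrac{1-z}{1-q}\Bigr]=\sum_{\mu\vdash k}\frac{(z;q)_{\ell(\mu)}}{(q;q)_{\ell(\mu)}}(\ldots)\,\tilde H_\mu[X;q].
\]
The underlying reason is the specialization of Hall--Littlewood functions at $X\to (1-z)/(1-q)$: it gives a factor $(z;q)_{\ell(\mu)}$ times an expression in $q$ alone. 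That expression is the $z=0$ value and is absorbed by $(q;q)_{\ell}$ together with the Cauchy identity.

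The main obstacle is the bookkeeping that identifies $\sum_{\ell(\alpha)=\ell}C_\alpha$ with $\sum_{\ell(\mu)=\ell}$ of appropriately normalized Hall--Littlewood functions, including the signs $(-1/q)^{a-1}$. I would first verify it for $k=2$ directly from the definition of $C_a(q)$ to fix normalizations. I would then push through the general symmetrization argument via the Cauchy kernel $e_k[XY]$ with $Y=(1-z)/(1-q)$.
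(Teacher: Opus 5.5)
The paper does not prove this statement; it quotes it from Haglund--Morse--Zabrocki. Two parts of your proposal are correct: the reduction to the single generating identity in $z$ (the $(z;q)_\ell$ have distinct degrees), and the vertex-operator description of $C_\alpha$ as a signed coefficient of $\prod_i\sum_r h_r[X]z_i^r\prod_{i<j}\frac{1-z_j/z_i}{1-z_j/(qz_i)}$.

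\textbf{The gap.} The proposal stops where the content begins. You assert that summing over compositions of length $\ell$ gives a specific combination of Hall--Littlewood functions over partitions of length $\ell$ matching the Cauchy expansion, and call this ``bookkeeping''. It is not a routine symmetrization.
\begin{itemize}
\item Each $\langle z^\alpha\rangle$ is read off from the expansion in the region $|z_j|<|z_i|$ for $i<j$. Permuting variables changes that region, so summing over $\alpha$ is not the same as symmetrizing the rational kernel.
\item Straightening genuinely mixes partitions. Directly from the definition,
\[
C_{(1,3)}=q^{-1}C_{(3,1)}-(1-q^{-1})C_{(2,2)},\qquad
\sum_{\alpha\vDash 4,\ \ell(\alpha)=2}C_\alpha=(1+q^{-1})C_{(3,1)}+q^{-1}C_{(2,2)}.
\]
Proving that such coefficients always match the Cauchy side is the whole proposition.
\item The specialization you invoke is also off. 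With Hall--Littlewood parameter $q$ one gets $P_\mu\bigl[\frac{1-z}{1-q};q\bigr]\propto\prod_{i=0}^{\ell(\mu)-1}(1-zq^{-i})$. For example, $e_2\bigl[\frac{1-z}{1-q}\bigr]=\frac{(1-z)(q-z)}{(1-q)(1-q^2)}$, which is not a multiple of $(z;q)_2$. The factor $(z;q)_{\ell(\mu)}$ appears only with parameter $q^{-1}$, after writing $\frac{1-z}{1-q}=-q\,\frac{1-z}{1-q^{-1}}$.
\end{itemize}

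\textbf{The missing idea.} It rescues the induction you abandoned: the $\ell$-dependent factor cancels under the $q$-difference $z\mapsto qz$, because
\[
(z;q)_\ell-(qz;q)_\ell=-z\,(1-q^\ell)\,(qz;q)_{\ell-1}.
\]
So, with $A:=X\frac{1-qz}{1-q}$ and the defining expansion used at both $z$ and $qz$, it suffices to prove
\[
e_k\Bigl[X\frac{1-z}{1-q}\Bigr]-e_k[A]=-z\sum_{a=1}^{k}C_a(q)\,e_{k-a}[A].
\]
\begin{itemize}
\item The left side is $e_k[A-zX]-e_k[A]=\sum_{j\geq1}(-z)^jh_j[X]\,e_{k-j}[A]$.
\item On the right, rename the variable in the definition of $C_a(q)$ to $w$. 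The shift $X\mapsto X-(1-1/q)/w$ sends $A$ to $A+\frac{1-qz}{qw}$. For $i\geq1$ one has $e_i\bigl[\frac{1-qz}{qw}\bigr]=(qw)^{-i}(-qz)^{i-1}(1-qz)$.
\item Hence the coefficient of $h_j[X]\,e_{k-j}[A]$ in $\sum_a C_a(q)\,e_{k-a}[A]$ telescopes:
\[
\Bigl(-\frac1q\Bigr)^{j-1}+(-1)^jq^{-j}\sum_{i=1}^{j-1}q^iz^{i-1}(1-qz)=(-1)^{j-1}z^{j-1}.
\]
\item Multiplying by $-z$ reproduces the left side term by term.
\end{itemize}
Comparing coefficients of the independent polynomials $(qz;q)_{\ell-1}$ then gives the recursion $E_{k,\ell}=\sum_{a\geq1}C_a(q)\,E_{k-a,\ell-1}$, with $E_{0,0}=1$ and $E_{j,0}=0$ for $j\geq1$. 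Induction on $k$ yields the proposition.
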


\begin{proposition}[{Can and Loehr~\cite[Eq.~(1.1)]{can}}]\label{propptoEnk}
For every $k\geq1$,
\begin{equation}\label{eqptoEnk}
(-1)^{k-1}p_k
=
\sum_{\ell=1}^{k}
\frac{[k]_q}{[\ell]_q}E_{k,\ell},
\end{equation}
where $[r]_q=(1-q^r)/(1-q)$.
\end{proposition}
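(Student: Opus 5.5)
The plan is to derive \eqref{eqptoEnk} directly from the defining generating function \eqref{eqdefEnk}: divide both sides by $(1-z)$ and then let $z\to1$. Both sides are polynomials in $z$ that vanish at $z=1$, so this amounts to comparing the coefficients of $(1-z)$ to first order.

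\emph{Left side.} Write $A=(1-z)/(1-q)$ and expand $e_k[XA]$ in power sums using
$e_k=\sum_{\lambda\vdash k}\varepsilon_\lambda z_\lambda^{-1}p_\lambda$, where $\varepsilon_\lambda=(-1)^{k-\ell(\lambda)}$. Since $p_r[XA]=p_r[X]\,p_r[A]$, this gives
\[
e_k\left[X\frac{1-z}{1-q}\right]=\sum_{\lambda\vdash k}\frac{\varepsilon_\lambda}{z_\lambda}\,p_\lambda[X]\prod_{i}\frac{1-z^{\lambda_i}}{1-q^{\lambda_i}}.
\]
Each factor $1-z^{\lambda_i}$ is divisible by $1-z$. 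Hence after dividing by $1-z$ and setting $z=1$, every term with $\ell(\lambda)\ge2$ still carries a factor $(1-z)$ and vanishes. Only $\lambda=(k)$ survives. For this term $z_{(k)}=k$ and $(1-z^k)/(1-z)\to k$, so the limit of the left side is
\[
\frac{(-1)^{k-1}}{1-q^k}\,p_k[X].
\]

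\emph{Right side.} For each $\ell\ge1$, $(1-z)^{-1}(z;q)_\ell$ at $z=1$ equals $(1-q)(1-q^2)\cdots(1-q^{\ell-1})$. Dividing by $(q;q)_\ell$ leaves $1/(1-q^\ell)$. So the limit of the right side is $\sum_{\ell=1}^k E_{k,\ell}/(1-q^\ell)$.

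\emph{Conclusion.} Equating the two limits and multiplying by $1-q^k$ gives
\[
(-1)^{k-1}p_k=\sum_{\ell=1}^k\frac{1-q^k}{1-q^\ell}\,E_{k,\ell}=\sum_{\ell=1}^k\frac{[k]_q}{[\ell]_q}\,E_{k,\ell}.
\]
There is no real obstacle here. The only point needing care is that \eqref{eqdefEnk} is an identity of polynomials in $z$ (of degree at most $k$). Division by $1-z$ is therefore legitimate term by term, and evaluation at $z=1$ is justified.
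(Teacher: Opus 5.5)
Your proof is correct. Dividing \eqref{eqdefEnk} by $1-z$ and setting $z=1$ is legitimate because $\Lambda[z]$ has no zero divisors. Each term $p_\lambda[X\frac{1-z}{1-q}]$ is divisible by $(1-z)^{\ell(\lambda)}$, so only $\lambda=(k)$ survives, with value $(-1)^{k-1}p_k/(1-q^k)$. On the other side, $(z;q)_\ell/\bigl((1-z)(q;q)_\ell\bigr)$ becomes $1/(1-q^\ell)$ at $z=1$. Multiplying by $1-q^k$ then gives exactly \eqref{eqptoEnk}.

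The paper does not prove this proposition. It imports the identity from Can and Loehr and uses it only as the algebraic input for Theorem~\ref{thmpn}. Your route therefore replaces a citation with a short self-contained argument. It shows that the expansion of $(-1)^{k-1}p_k$ in the $E_{k,\ell}$ is a purely formal consequence of two facts: the Garsia--Haglund definition \eqref{eqdefEnk}, and the expansion $e_k=\sum_{\lambda}\varepsilon_\lambda z_\lambda^{-1}p_\lambda$. No Macdonald-polynomial or combinatorial input is needed. What the citation buys is brevity, since the identity is standard. What your argument buys is that the reader can check the normalization $[k]_q/[\ell]_q$ directly against the paper's own definition of $E_{k,\ell}$.
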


Throughout, set $M:=(1-q)(1-t)$. Let $\widetilde H_\mu[X;q,t]$ denote
the modified Macdonald polynomials of Garsia and Haiman~\cite{GH93}, normalized
as in Haglund~\cite{Hag08}:
\[
\widetilde H_\mu[X;q,t]
=t^{n(\mu)}J_\mu\Bigl[\frac{X}{1-t^{-1}};q,t^{-1}\Bigr],
\qquad
n(\mu):=\sum_{i\geq1}(i-1)\mu_i,
\]
where $J_\mu$ is the integral form of the Macdonald
polynomial~\cite{Mac95}; for example,
$\widetilde H_{(2)}=s_2+qs_{11}$. The $\widetilde H_\mu$ form a basis of $\Lambda$.
For a partition $\mu$, set
\[
B_\mu(q,t):=\sum_{(i,j)\in\mu}q^{i-1}t^{j-1},
\qquad
\overline {B_\mu(q,t)}:=B_\mu(q^{-1},t^{-1}),
\]
where $(i,j)$ runs over the cells of the diagram of $\mu$, with
$i$ the column index and $j$ the row index, so that row $j$ has
$\mu_j$ cells.
For $f\in\Lambda$, the notations $f[B]$ and
$f[\overline B]$ denote the operators determined by
\[
f[B]\widetilde H_\mu
=
f[B_\mu(q,t)]\widetilde H_\mu,
\qquad
f[\overline B]\widetilde H_\mu
=
f[\overline {B_\mu(q,t)}]\widetilde H_\mu.
\]

The nabla operator $\nabla$ is defined on the basis
$\{\widetilde H_\mu\}$ by
\[
\nabla\widetilde H_\mu[X;q,t]
= e_{|\mu|}[B] \widetilde H_\mu[X;q,t]=
q^{n(\mu')}t^{n(\mu)} \widetilde H_\mu[X;q,t].
\]

Set
\[
\Pi_\mu:=\prod_{(i,j)\in\mu\setminus\{(1,1)\}}
   (1-q^{i-1}t^{j-1}),\qquad
\mathbf{\Pi}\widetilde H_\mu[X;q,t]:=\Pi_\mu\widetilde H_\mu[X;q,t],
\]
with $\Pi_\emptyset=1$. Following D'Adderio, Iraci, and Vanden
Wyngaerd~\cite[Eq.~(28)]{DIVW21},
let $f$ and $F$ be homogeneous symmetric functions of degrees $a$ and
$b$, respectively. Then
\[
\Theta_f F:=
\begin{cases}
0,&a>0\text{ and }b=0,\\
fF,&a=b=0,\\
\mathbf{\Pi}\bigl(f[X/M]\,\mathbf{\Pi}^{-1}F\bigr),&b>0.
\end{cases}
\]
We extend this definition linearly in $f$ and $F$. In particular,
$\Theta_{e_0}$ is the identity and $\Theta_{e_d}1=0$ for $d>0$.

\subsection{Combinatorial objects}\label{subsec:comb}
We recall $(km,kn)$-paths and their statistics, following Iraci
et al.~\cite{IPPVW23}.
From now on, unless stated otherwise, $k,m,n$ denote positive integers
with $\gcd(m,n)=1$.
A \emph{$(km,kn)$-path} is a lattice path from $(0,0)$ to $(km,kn)$
consisting of unit North and East steps, abbreviated $N$ and $E$,
respectively, and ending with an $E$ step. A \emph{$(km,kn)$-Dyck path}
is a $(km,kn)$-path that stays weakly above the main diagonal $y=\frac{n}{m}x$.

Suppose that the $i$th North step of a $(km,kn)$-path $\pi$ begins at
$(x,y)$. Set
$
a_i:=\frac{m}{n}y-x.
$
Thus $a_i$ is its signed horizontal distance from the main diagonal,
and we call $na_i=my-nx$ the \emph{rank} of this North step.
The \emph{area word} of $\pi$ is $(a_1,\ldots,a_{kn})$. Set
$
s:=-\min_{1\leq i\leq kn}a_i.
$
Then $s=0$ if and only if $\pi$ is a $(km,kn)$-Dyck path. The line
$my=n(x-s)$ is the \emph{base diagonal} of $\pi$, as in the square case
treated by D'Adderio, Iraci, and Vanden Wyngaerd~\cite{DIVWsq}.

For a $(km,kn)$-path $\pi$, define its \emph{touch statistic} by
\[
\touch(\pi):=
\#\{1\le i\le kn:a_i+s=0\}.
\]
Thus $\touch(\pi)$ counts the North steps that begin on the base
diagonal. Since $\gcd(m,n)=1$, consecutive lattice points on the
base diagonal differ by $(m,n)$, while the North steps begin at the
distinct heights $0,1,\ldots,kn-1$. Hence at most $k$ North steps
begin on the base diagonal, and at least one does, since the minimum
defining $s$ is attained. Thus $1\le \touch(\pi)\le k$.

For a $(km,kn)$-Dyck path $\pi$, let its successive intersections with
the main diagonal be
\[
(j_0m,j_0n),\ldots,(j_\ell m,j_\ell n),
\qquad 0=j_0<j_1<\cdots<j_\ell=k.
\]
Define its \emph{touch composition} by
\[
\comp(\pi):=
(j_1-j_0,\ldots,j_\ell-j_{\ell-1})\vDash k.
\]
In this case $s=0$, and each intersection except $(km,kn)$
is the starting point of a North step. Hence
$
\touch(\pi)
=\#\{0\le j<k:a_{jn+1}=0\}
=\ell,
$
the number of parts of $\comp(\pi)$.

A \emph{$(km,kn)$-preference function} is a $(km,kn)$-path with a
positive integer label on each North step, such that the labels on
consecutive North steps increase strictly. If its underlying path
is a $(km,kn)$-Dyck path, it is a \emph{$(km,kn)$-parking function}. Set
\[
\Pref_{km,kn}:=\{\text{$(km,kn)$-preference functions}\},\qquad \PF_{km,kn}:=\{\text{$(km,kn)$-parking functions}\}.
\]
If $w_i$ is the label of the $i$th North step, set
$
x_\pi:=\prod_{i=1}^{kn}x_{w_i}.
$

For $\pi\in\Pref_{km,kn}$, its \emph{area} is
\[
\area(\pi):=\sum_{i=1}^{kn}\lfloor a_i+s\rfloor.
\]
This counts the unit squares between $\pi$ and its base diagonal.

Let $\mu(\pi)$ be the Ferrers diagram consisting of the cells above
$\pi$ inside the $(km)\times(kn)$ rectangle. For $c\in\mu(\pi)$, let
$a(c)$ and $\ell(c)$ be the numbers of cells of $\mu(\pi)$ strictly
to the right and strictly below $c$, respectively. The \emph{path
dinv} is
\[
\pdinv(\pi):=
\#\left\{
c\in\mu(\pi):
\frac{a(c)}{\ell(c)+1}\leq\frac{m}{n}
<\frac{a(c)+1}{\ell(c)}
\right\},
\]
where the right inequality holds by convention if $\ell(c)=0$.

For $\pi\in\Pref_{km,kn}$, the \emph{attacks} are the pairs in the
two disjoint sets below. Their total number is the \emph{maximal
temporary dinv}:
\[
\begin{aligned}
\maxtdinv(\pi):={}&
\#\bigl\{(i,j):1\leq i<j\leq kn,\ 0\leq a_j-a_i<m/n\bigr\}\\
&+\#\bigl\{(i,j):1\leq j<i\leq kn,\ 0<a_j-a_i\leq m/n\bigr\}.
\end{aligned}
\]
The \emph{temporary dinv} counts the attacks whose labels increase:
\[
\tdinv(\pi):=
\#\bigl\{(i,j):(i,j)\text{ is an attack of }\pi,\ w_i<w_j\bigr\}.
\]
For a fixed underlying path, $\maxtdinv(\pi)$ is the maximum of
$\tdinv$ over its valid labelings.

We also set
\[
B(\pi):=
\#\left\{1\leq i\leq kn:a_i<-\frac{m}{n}\right\}.
\]
It counts North steps whose upper endpoints lie strictly below the
main diagonal.

The \emph{dinv} statistic is
\[
\dinv(\pi):=
\tdinv(\pi)+\pdinv(\pi)-\maxtdinv(\pi)
-B(\pi).
\]
\begin{example}\label{examplePathStats}
Let $k=2$, $m=3$, and $n=4$. The path $\pi$ in
Figure~\ref{figPathStats} has vertical run lengths
$(2,1,0,1,3,1)$; its labels, read from bottom to top in each
column, are $(2,6),(3),\varnothing,(1),(4,5,7),(8)$.
\begin{center}
\begin{minipage}{\linewidth}
\centering
\begin{tikzpicture}[x=5.5mm,y=5.5mm]
  \fill[blue!20] (0,2) rectangle (1,3);
  \fill[blue!20] (0,5) rectangle (1,8);
  \fill[blue!20] (1,5) rectangle (2,8);
  \fill[blue!20] (2,3) rectangle (3,6);
  \fill[blue!20] (3,4) rectangle (4,6);
  \fill[blue!20] (4,7) rectangle (5,8);
  \draw[step=1,gray!35,very thin] (0,0) grid (6,8);
  \draw[blue,dashed,line width=.85pt] (0,0)--(6,8);
  \draw[blue!65,densely dashed,line width=.7pt] (1,0)--(6,6.6667);
  \draw[line width=1pt]
    (0,0)--(0,2)--(1,2)--(1,3)--(2,3)--(3,3)
    --(3,4)--(4,4)--(4,7)--(5,7)--(5,8)--(6,8);
  \foreach \x/\y/\z in
    {0/.5/2,0/1.5/6,1/2.5/3,3/3.5/1,
     4/4.5/4,4/5.5/5,4/6.5/7,5/7.5/8}
    \node[font=\scriptsize,fill=white,inner sep=.6pt] at ({\x+.5},\y) {$\z$};
  \node[anchor=west,inner sep=0pt]
    at ([xshift=11mm]current bounding box.east) {%
\begin{tabular}{@{}ll@{}}
Statistic & Value\\
\hline
$\area(\pi)$ & $5$\\
$\touch(\pi)$ & $1$\\
$\pdinv(\pi)$ & $13$\\
$\tdinv(\pi)$ & $8$\\
$\maxtdinv(\pi)$ & $15$\\
$B(\pi)$ & $1$\\
$\dinv(\pi)$ & $5$
\end{tabular}
  };
\end{tikzpicture}
\captionof{figure}{An example of a $(6,8)$-preference function and its statistics}\label{figPathStats}
\end{minipage}
\end{center}

In Figure~\ref{figPathStats}, the blue cells contribute to
$\pdinv(\pi)$.
The eight North steps have area word
\[
\left(0,\frac34,\frac12,-\frac34,-1,
      -\frac14,\frac12,\frac14\right).
\]
Thus $s=1$, and the contributions $\lfloor a_i+s\rfloor$ are
$(1,1,1,0,0,0,1,1)$, giving $\area(\pi)=5$. Only the fifth North
step begins on the base diagonal, so $\touch(\pi)=1$. Its upper
endpoint is $(4,5)$, strictly below the main diagonal, and no
other North step has this property; hence $B(\pi)=1$.

The Ferrers diagram above $\pi$ has $21$ cells, of which the
$13$ blue cells satisfy the path dinv inequalities. For the
blue cell with lower left corner $(0,5)$, the arm and leg both
have size $3$; the cell at $(0,3)$ has arm $2$ and
leg $1$, so it fails the first inequality. There are $6$ attacks
with $i<j$ and $9$ with $i>j$. All $6$ in the first group
have $w_i<w_j$, while in the second group only
$(i,j)=(4,1),(3,2)$ do. Consequently,
$\maxtdinv(\pi)=6+9=15$, $\tdinv(\pi)=6+2=8$, and
$\dinv(\pi)=8+13-15-1=5$.
\end{example}

For $\pi\in\PF_{km,kn}$, we have $B(\pi)=0$, and $\dinv(\pi)$
specializes to the dinv statistic for $(km,kn)$-parking functions of
Bergeron et al.~\cite[Algorithm~6.1]{BGLX16}.

When $m=n=1$, the area word is integral and
\[
\area(\pi)=\sum_{i=1}^{k}(a_i+s).
\]
An attack $(i,j)$ with $i<j$ has $a_i=a_j$, while one
with $i>j$ has $a_j=a_i+1$. The statistic $\tdinv(\pi)$
counts those attacks with $w_i<w_j$.
Moreover, specializing the definitions of Iraci
et al.~\cite[Definitions~3.16 and 3.19]{IPPVW23} to square paths gives
\[
\dinv(\pi)=\tdinv(\pi)+\#\{i:a_i<0\}.
\]
The attacks with $i<j$ and $i>j$ are Sergel's primary and
secondary dinv contributions, while $\#\{i:a_i<0\}$ is her
tertiary dinv~\cite{Ser17}, called the bonus dinv by D'Adderio, Iraci, and
Vanden Wyngaerd~\cite{DIVWsq}. Hence the
right side of Theorem~\ref{thmmain} specializes to the monomial
form of the identity proved by Sergel~\cite[Theorem~4.5]{Ser17};
her formulation uses standard labels and fundamental quasisymmetric
functions, and it is equivalent to this monomial form by standardization.

\begin{example}\label{exampleSquareSergel}
Take $k=5$ and $m=n=1$. The $(5,5)$-preference function
$\pi$ in Figure~\ref{figSquareSergel} has vertical run lengths $(0,3,0,1,1)$, with
labels $\varnothing,(1,3,5),\varnothing,(2),(4)$ in its five
columns. In particular, it passes below the main diagonal.
\begin{center}
\begin{minipage}{\linewidth}
\centering
\begin{tikzpicture}[x=7mm,y=7mm]
  \fill[blue!20] (0,0) rectangle (1,1);
  \fill[blue!20] (1,3) rectangle (2,5);
  \fill[blue!20] (2,3) rectangle (3,5);
  \fill[blue!20] (3,4) rectangle (4,5);
  \draw[step=1,gray!35,very thin] (0,0) grid (5,5);
  \draw[blue,dashed,line width=.85pt] (0,0)--(5,5);
  \draw[blue!65,densely dashed,line width=.7pt] (1,0)--(5,4);
  \draw[line width=1pt]
    (0,0)--(1,0)--(1,3)--(2,3)--(3,3)
    --(3,4)--(4,4)--(4,5)--(5,5);
  \foreach \x/\y/\z in
    {1/.5/1,1/1.5/3,1/2.5/5,3/3.5/2,4/4.5/4}
    \node[font=\scriptsize,fill=white,inner sep=.6pt] at ({\x+.5},\y) {$\z$};
  \node[anchor=west,inner sep=0pt]
    at ([xshift=11mm]current bounding box.east) {%
\begin{tabular}{@{}ll@{}}
Statistic & Value\\
\hline
$\area(\pi)$ & $5$\\
$\touch(\pi)$ & $1$\\
$\pdinv(\pi)$ & $6$\\
$\tdinv(\pi)$ & $4$\\
$\maxtdinv(\pi)$ & $5$\\
$B(\pi)$ & $0$\\
$\dinv(\pi)$ & $5$
\end{tabular}
  };
\end{tikzpicture}

\captionof{figure}{An example of a $(5,5)$-preference function and its statistics}\label{figSquareSergel}
\end{minipage}
\end{center}

The blue cells in Figure~\ref{figSquareSergel} contribute to
$\pdinv(\pi)$.
Its area word is $(-1,0,1,0,0)$, so $s=1$,
$\area(\pi)=0+1+2+1+1=5$, and $\touch(\pi)=1$.
Of the ten cells above the path, the six blue cells satisfy the
path dinv inequalities. The attacks with $i<j$ are
$(2,4),(2,5),(4,5)$; those with $i>j$ are
$(4,3),(5,3)$. Thus $\maxtdinv(\pi)=3+2=5$.
The labels satisfy $w_i<w_j$ for two attacks in each group, so
$\tdinv(\pi)=2+2=4$.
No North step ends strictly below the main diagonal, hence
$B(\pi)=0$ and $\dinv(\pi)=4+6-5=5$.
The single negative area entry contributes one tertiary dinv in
Sergel's square paths statistic~\cite{Ser17}.
Thus Sergel's primary, secondary, and tertiary dinv give
$2+2+1=5$, the value of $\dinv(\pi)$ above.
\end{example}

\subsection{Schiffmann algebra}\label{subsec:schiffmann}
The elliptic Hall algebra of Burban and
Schiffmann~\cite{BS12}, denoted by $\mathcal E$, is also
called the \emph{Schiffmann algebra}. Its action on
$\Lambda$ was studied by Feigin and Tsymbaliuk~\cite{FT}
and Schiffmann and Vasserot~\cite{SV}. We use the
reformulation of Blasiak et al.~\cite{Delta,path}.

We write the operators in our combinatorial formulas as
$f[-MX^{m,n}]$; they are defined below. The dictionary between these
operators and the $Q$-operator construction is given by D'Adderio,
Interdonato, Iraci, and Pagaria~\cite[Remark~2.11]{DIIP26}.
With the operator definitions of Iraci et al.~\cite[Definitions~2.5 and~2.6]{IPPVW23},
the left side of their Conjecture~4.2 must be multiplied by
$(-1)^{kn-1}$.\footnote{\label{fnsign}With the same definitions, the rational shuffle
theorem as stated in \cite[Theorem~4.1]{IPPVW23} needs the sign
$(-1)^{k(n-1)}$ for $e_{km,kn}$. In the normalization in which that theorem
holds as stated, the factor above becomes $(-1)^{k-1}$, the sign in
Theorem~\ref{thmmain}.}
In our notation, the left side of the corrected identity is
$[km]_q/[k]_q$ times $(\omega p_k)[-MX^{m,n}]\cdot1$, and
$\omega p_k=(-1)^{k-1}p_k$; this gives Theorem~\ref{thmmain}.

The algebra $\mathcal E$ is generated by subalgebras
$\Lambda(X^{m,n})$, one for each coprime pair of integers
$(m,n)$, together with the central Laurent polynomial algebra
$
\mathbb Q(q,t)[c_1^{\pm1},c_2^{\pm1}].
$
Each $\Lambda(X^{m,n})$ is isomorphic to $\Lambda$, and for
$f\in\Lambda$ we write $f(X^{m,n})$ for the image of $f$.
See Blasiak et al.~\cite[Section~3.1]{Delta} for the defining relations.
We write $f[-MX^{m,n}]$ for the image of the plethystic substitution
$f[-MX]$ under the isomorphism $f\mapsto f(X^{m,n})$; for example,
\[
p_r[-MX^{m,n}]=-(1-q^r)(1-t^r)\,p_r(X^{m,n}).
\]
For $F\in\mathcal E$, we write $F\cdot1$ for the action of $F$
on $1\in\Lambda$ given by Proposition~\ref{prop:Eactonsym}.

\begin{proposition}[{Blasiak et al.~\cite[Proposition~3.3.1]{path}}]\label{prop:Eactonsym}
There is a unique action of $\mathcal E$ on $\Lambda$ with the
following properties:
\begin{enumerate}
    \item The central parameters act as scalars:
    \[
    c_1\mapsto1,\qquad c_2\mapsto(qt)^{-1}.
    \]

    \item For $f\in\Lambda$, the elements $f(X^{1,0})$ and
    $f(X^{-1,0})$ act as Macdonald eigenoperators:
    \[
    f(X^{1,0})\mapsto(\omega f)[B-1/M],
    \qquad
    f(X^{-1,0})\mapsto
    (\omega f)\bigl[\,\overline{1/M-B}\,\bigr].
    \]

    \item The elements $f(X^{0,1})$ and $f(X^{0,-1})$ act by
    \[
    f(X^{0,1})\mapsto f[-X/M]^\bullet,
    \qquad
    f(X^{0,-1})\mapsto f(X)^\perp.
    \]
\end{enumerate}
Here $f^\bullet$ denotes multiplication by $f$, and
$f^\perp$ is the adjoint defined in Section~\ref{subsec:symfun}.
In the second item, the bar replaces $q,t$ by $q^{-1},t^{-1}$;
thus $\overline{1/M-B}=qt/M-\overline B$.
\end{proposition}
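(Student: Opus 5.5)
This statement is quoted from Blasiak et al.~\cite[Proposition~3.3.1]{path}, so the plan is to recover it from their construction, not to build a new one. The approach follows the standard route for elliptic Hall algebra actions: existence from a known representation, uniqueness from generation.

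\emph{Existence.} I would start from the Feigin--Tsymbaliuk and Schiffmann--Vasserot representation of $\mathcal E$ on $\Lambda$. In that representation the ``horizontal'' Heisenberg subalgebra acts diagonally on the modified Macdonald basis $\widetilde H_\mu$. The ``vertical'' Heisenberg subalgebra acts by multiplication and skewing operators. The central elements act as scalars determined by the chosen level. Next I would apply the change of conventions of Blasiak et al.: an automorphism of $\mathcal E$ (a rotation of the $\mathbb Z^2$ lattice, possibly composed with $\omega$ and the plethystic twist $X\mapsto -X/M$). This automorphism is chosen so that $f(X^{0,\pm1})$ act as $f[-X/M]^\bullet$ and $f^\perp$, and $f(X^{\pm1,0})$ act by the eigenvalues $(\omega f)[B-1/M]$ and $(\omega f)[\,\overline{1/M-B}\,]$.

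The checks needed are the following.
\begin{itemize}
\item The Heisenberg relation between $p_r(X^{0,1})$ and $p_r(X^{0,-1})$ holds. This amounts to the commutator of multiplication by $p_r[-X/M]$ with $p_r^\perp$, namely $[p_r^\perp,p_r[-X/M]]=-r/p_r[M]$. This matches the central charge for $c_2\mapsto (qt)^{-1}$ after the twist.
\item The diagonal operators satisfy the corresponding horizontal relation, with $c_1\mapsto 1$ giving commuting operators.
\item The mixed relations hold. These reduce to the Pieri-type identities for $\widetilde H_\mu$, i.e.\ the action of $e_1^\bullet$ and $e_1^\perp$ in the Macdonald basis, together with the Garsia--Haiman--Tesler operator identities. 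They are exactly the relations verified in \cite[Section~3]{path}.
\end{itemize}

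\emph{Uniqueness.} $\mathcal E$ is generated, together with the center, by the four subalgebras $\Lambda(X^{\pm1,0})$ and $\Lambda(X^{0,\pm1})$. In fact it is already generated by $\Lambda(X^{\pm1,0})$, $\Lambda(X^{0,\pm1})$ and the central elements. This is because every $\Lambda(X^{m,n})$ is obtained from these by iterated commutators, using the Burban--Schiffmann relations and the $SL_2(\mathbb Z)$ symmetry. Any action satisfying (1)--(3) therefore has its values fixed on a generating set, so it is unique.

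\emph{Main obstacle.} I expect the hardest step to be the bookkeeping of conventions: matching the signs, $\omega$, the $-1/M$ shift in $B-1/M$, and the central character $c_2\mapsto(qt)^{-1}$ with the defining relations in \cite[Section~3.1]{Delta}. This is exactly where the sign issue of footnote~\ref{fnsign} originates. I would first compare on the simplest elements, $p_1(X^{\pm1,0})$ and $p_1(X^{0,\pm1})$, and check that their commutator gives the correct multiple of $p_1(X^{1,1})$ acting on $1$.
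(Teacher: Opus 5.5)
The paper does not prove this statement; it quotes it from Blasiak et al., and your outline is the standard argument behind that citation. You transport the Feigin--Tsymbaliuk/Schiffmann--Vasserot representation through a change of conventions, and you get uniqueness because $\mathcal E$ is generated by its center together with $\Lambda(X^{\pm1,0})$ and $\Lambda(X^{0,\pm1})$. Note only that once existence is imported, all relations hold by transport of structure, so your bulleted Heisenberg and Pieri-type checks are consistency checks on the conventions rather than steps of the proof; the substantive point is that the transported generators act by exactly the operators in (1)--(3).
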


\begin{lemma}[{Blasiak et al.~\cite[Lemma~3.4.1]{path}}]\label{lemma:nabla}
Conjugation by $\nabla$ satisfies
\[
\nabla f(X^{m,n})\nabla^{-1}
=
f(X^{m+n,n}).
\]
\end{lemma}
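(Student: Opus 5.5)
The plan is to use the $SL_2(\mathbb Z)$ symmetry of $\mathcal E$ and reduce to a check on a small set of generators. By Burban and Schiffmann~\cite{BS12}, the matrix $\begin{pmatrix}1&1\\0&1\end{pmatrix}$ induces an algebra automorphism $\mathsf N$ of $\mathcal E$. This automorphism sends $f(X^{m,n})\mapsto f(X^{m+n,n})$ for every coprime pair $(m,n)$, and it acts on the central algebra $\mathbb Q(q,t)[c_1^{\pm1},c_2^{\pm1}]$ by the corresponding monomial substitution. First I will check that this substitution is compatible with the specialization $c_1\mapsto1$, $c_2\mapsto(qt)^{-1}$ of Proposition~\ref{prop:Eactonsym}. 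Since $c_1\mapsto1$, any factor of $c_1$ introduced by the shear is harmless, so $\mathsf N$ descends to the quotient of $\mathcal E$ that acts on $\Lambda$.

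Write $\rho$ for the action. The claim becomes $\nabla\rho(g)\nabla^{-1}=\rho(\mathsf N g)$ for all $g\in\mathcal E$. Both sides are algebra homomorphisms $\mathcal E\to\operatorname{End}(\Lambda)$, so it suffices to verify the identity on a generating set. I will use the fact that $\mathcal E$ is generated, together with its center, by the degree-one elements $e_1(X^{\pm1,0})$ and $e_1(X^{0,\pm1})$. Each of the other subalgebras $\Lambda(X^{m,n})$ is then produced from these by iterated commutators in the defining relations of~\cite[Section~3.1]{Delta}.

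For $g=f(X^{1,0})$ and $g=f(X^{-1,0})$ the shear fixes $(\pm1,0)$. By Proposition~\ref{prop:Eactonsym}, both $\rho(g)$ are diagonal in the basis $\widetilde H_\mu$, and so is $\nabla$. Hence they commute, and the identity holds trivially. The real content lies in the two remaining generators.
\begin{itemize}
\item For $g=e_1(X^{0,1})$, whose action is multiplication by $e_1[-X/M]=-e_1/M$, I need $\nabla e_1^\bullet\nabla^{-1}$ to equal $\rho(e_1(X^{1,1}))$.
\item For $g=e_1(X^{0,-1})$, whose action is $e_1^\perp$, I need $\nabla e_1^\perp\nabla^{-1}$ to equal $\rho(e_1(X^{1,-1}))$, since the shear sends $(0,-1)\mapsto(-1,-1)$ up to the sign convention of the automorphism.
\end{itemize}

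To compute $\rho(e_1(X^{\pm1,\pm1}))$, I will use the relation in $\mathcal E$ that writes these elements, up to a scalar and central factors, as the commutator of $e_1(X^{1,0})$ with $e_1(X^{0,\pm1})$. This turns each identity into an explicit operator statement about $\widetilde H_\mu$. The identity for $e_1^\bullet$ is the classical fact of Bergeron, Garsia, Haiman, and Tesler~\cite{BGHT99}, expressing $\nabla e_1\nabla^{-1}$ through the operator $D_1$. Concretely, it is the Macdonald Pieri formula for $e_1\widetilde H_\mu$ with coefficients $d_{\mu\nu}$: the weights $T_\nu/T_\mu$ coming from $\nabla$ match the factor $B_\nu-B_\mu$ coming from the commutator with $e_1[B-1/M]$. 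The identity for $e_1^\perp$ follows in the same way from the dual Pieri coefficients $c_{\mu\nu}$, or from the first one by the $*$-duality of $\Lambda$.

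I expect the main obstacle to be the sign and central-charge bookkeeping. The normalization of $\mathsf N$ on the center must be consistent with $c_2\mapsto(qt)^{-1}$. The resulting powers of $qt$ and the signs $(-1)$ arising from $\omega$ in Proposition~\ref{prop:Eactonsym}(2) must also cancel exactly in the Pieri computation. I would first fix all conventions by testing the identity on $\widetilde H_{(1)}$ and $\widetilde H_{(2)}$, and only then carry out the general Pieri verification.
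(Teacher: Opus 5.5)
The paper gives no proof of its own here; it quotes the lemma from Blasiak et al. Most of your outline is sound. Twisting the action by the shear is compatible with the central character because $c_1\mapsto 1$. The generators $f(X^{\pm1,0})$ are handled by diagonality. Your first bullet also works. Write $T_\mu=q^{n(\mu')}t^{n(\mu)}$ for the $\nabla$-eigenvalue. If $\nu$ is $\mu$ plus a cell of weight $x$, then $T_\nu/T_\mu=x=B_\nu-B_\mu$, so $\nabla\rho(e_1(X^{0,1}))\nabla^{-1}=[\rho(e_1(X^{1,0})),\rho(e_1(X^{0,1}))]$ exactly. (The operator being conjugated there is $\rho(e_1(X^{0,1}))=-e_1/M$, not $e_1$.)

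\textbf{The gap is in the second bullet.} As written it is wrong, and the check you describe would fail.

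\emph{Wrong target.} The lemma fixes the shear, so $(0,-1)\mapsto(-1,-1)$; no sign convention is left to choose. The target is therefore $\rho(e_1(X^{-1,-1}))$. It comes from a commutator with $e_1(X^{-1,0})$, not with $e_1(X^{1,0})$.

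\emph{Why your version fails.} Let $\nu$ be $\mu$ minus a cell of weight $x$. Then $\nabla e_1^\perp\nabla^{-1}$ multiplies the $\widetilde H_\nu$-component of $e_1^\perp\widetilde H_\mu$ by $T_\nu/T_\mu=x^{-1}$. Commuting with $\rho(e_1(X^{1,0}))=B-1/M$ multiplies that component by $B_\nu-B_\mu=-x$. The ratio $-x^2$ depends on the cell, so no scalar or central factor makes $\nabla e_1^\perp\nabla^{-1}$ proportional to $\rho(e_1(X^{1,-1}))$. In fact that commutator equals $-\nabla^{-1}e_1^\perp\nabla$, the conjugate for the inverse shear. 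Your $*$-duality shortcut has the same defect: the $*$-adjoint of $\nabla e_1\nabla^{-1}$ is a scalar multiple of $\nabla^{-1}e_1^\perp\nabla$, not of $\nabla e_1^\perp\nabla^{-1}$.

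\emph{Repair.} Use $\rho(e_1(X^{-1,0}))=qt/M-\overline B$. Commuting with it multiplies the same component by $\overline{B_\mu}-\overline{B_\nu}=x^{-1}$. Hence $\nabla\rho(e_1(X^{0,-1}))\nabla^{-1}=[\rho(e_1(X^{-1,0})),\rho(e_1(X^{0,-1}))]$. It then remains to check, as in the first case, that the defining relation for the pair $(-1,0),(0,-1)$ yields $e_1(X^{-1,-1})$ with a central factor that specializes to $1$.

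\emph{Alternative repair.} You can keep your $(1,-1)$ computation if you read it as $\nabla\rho(e_1(X^{1,-1}))\nabla^{-1}=\rho(e_1(X^{0,-1}))$. This is exactly the required identity for the generator $e_1(X^{1,-1})$, since the shear sends $(1,-1)$ to $(0,-1)$. Moreover, $e_1(X^{0,-1})$ is a commutator of $e_1(X^{-1,0})$ and $e_1(X^{1,-1})$, because the triangle on $0,(-1,0),(0,-1)$ has no interior lattice points. So replacing $e_1(X^{0,-1})$ by $e_1(X^{1,-1})$ in your generating set still closes the argument.
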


Proposition~\ref{prop:Eactonsym} gives
$f[-MX^{0,1}]\cdot1=f$. Applying
Lemma~\ref{lemma:nabla} repeatedly and using
$\nabla1=1$, we obtain, for every $f\in\Lambda$,
\[
f[-MX^{m,1}]\cdot1=\nabla^m f.
\]

The compositional $(km,kn)$-shuffle theorem was conjectured
by Bergeron et al.~\cite[Conjecture~3.3]{BGLX16} and proved by
Mellit~\cite[Section~6]{Mel21}. Using the dictionary of D'Adderio
et al.~\cite[Remark~2.11]{DIIP26} between the $Q$-operators and the
operators $f[-MX^{m,n}]$, it gives a combinatorial
interpretation of $C_\alpha[-MX^{m,n}]\cdot1$.

\begin{theorem}[{Mellit~\cite[Section~6]{Mel21}}]\label{kmknshuffle}
Let $k,m,n$ be positive integers with $\gcd(m,n)=1$. For
every $\alpha\vDash k$,
\[
C_\alpha[-MX^{m,n}]\cdot1
=
\sum_{\substack{\pi\in\PF_{km,kn}\\
                 \comp(\pi)=\alpha}}
q^{\dinv(\pi)}t^{\area(\pi)}x_\pi.
\]
\end{theorem}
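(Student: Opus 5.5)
The plan is to obtain the statement as a translation of Mellit's theorem, not to reprove it. Mellit~\cite[Section~6]{Mel21} proves the compositional $(km,kn)$-shuffle conjecture of Bergeron et al.~\cite[Conjecture~3.3]{BGLX16}. That conjecture is phrased through the operators $Q_{km,kn}$ built from the $Q_{m,n}$ and the $C_a$-operators, acting on $(-1)^{k}$-normalized $C_\alpha$-type elements. So the proof reduces to three steps: match the operators, match the combinatorics, and check that the two sides agree term by term.

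First, I would use the dictionary of D'Adderio, Interdonato, Iraci, and Pagaria~\cite[Remark~2.11]{DIIP26}. It rewrites the BGLX operator attached to $\alpha\vDash k$ and the slope $(m,n)$ as $C_\alpha[-MX^{m,n}]$ acting on $1$, in the Schiffmann algebra action of Proposition~\ref{prop:Eactonsym}. To carry this out, I would do the following:
\begin{enumerate}
\item Express both sides through the generators $f(X^{m,n})$, with $X^{m,n}$ obtained from $X^{0,1}$ and $X^{1,0}$ by $SL_2(\mathbb Z)$ conjugation.
\item Use Lemma~\ref{lemma:nabla} for the slopes $(m,1)$. As a sanity check, this must recover $C_\alpha[-MX^{m,1}]\cdot1=\nabla^m C_\alpha[X;q]$, which is the compositional shuffle theorem of Carlsson--Mellit when $m=1$.
\item Track the central characters $c_1\mapsto 1$, $c_2\mapsto (qt)^{-1}$ and the $\omega$ appearing in Proposition~\ref{prop:Eactonsym}(2).
\end{enumerate}

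Second, I would match the combinatorial side. A BGLX parking function of shape $(km,kn)$ is a Dyck path above $y=\frac nm x$ with column-increasing labels. This is exactly our $\PF_{km,kn}$. Their composition, read from the returns to the diagonal at the lattice points $(jm,jn)$, is our $\comp(\pi)$. Their area counts full cells between the path and the diagonal, which is $\sum_i\lfloor a_i\rfloor$ since $s=0$. For dinv, I would invoke the remark after Example~\ref{examplePathStats}: for $\pi\in\PF_{km,kn}$ we have $B(\pi)=0$, and $\tdinv+\pdinv-\maxtdinv$ agrees with \cite[Algorithm~6.1]{BGLX16}. This remains to be verified by comparing the attack conditions $0\le a_j-a_i<m/n$ (for $i<j$) and $0<a_j-a_i\le m/n$ (for $i>j$) with the rank conditions in their algorithm. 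The monomial $x_\pi$ is the standard one, since Mellit's statement, after standardization, gives the same quasisymmetric expansion.

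The main obstacle I expect is the sign and normalization bookkeeping in the operator dictionary. Footnote~\ref{fnsign} already shows that the conventions of \cite{IPPVW23} differ by $(-1)^{k(n-1)}$ and similar factors, and $C_a(q)$ carries the factor $(-1/q)^{a-1}$. I would fix the normalization by checking two base cases: $k=1$, where $\alpha=(1)$ and the identity is the rational shuffle theorem for $e_1$; and $n=1$, where the Carlsson--Mellit theorem applies. Both pin down the overall sign and power of $q$. Since the dictionary is multiplicative in the Schiffmann algebra, agreement in these cases together with the structural match should force agreement for all $\alpha$.
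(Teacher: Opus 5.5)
Your overall route is the paper's. The paper does not reprove this statement. It transports Mellit's proof of the compositional $(km,kn)$-shuffle conjecture of Bergeron et al.\ through the dictionary of D'Adderio, Interdonato, Iraci, and Pagaria (their Remark~2.11) between the $Q$-operators and the elements $f[-MX^{m,n}]$. It combines this with the observation that for $\pi\in\PF_{km,kn}$ one has $B(\pi)=0$, and that the dinv of Section~\ref{subsec:comb} reduces to the dinv of Bergeron et al.\ (their Algorithm~6.1). Your second step, matching area, touch composition and dinv on Dyck paths, agrees with this.

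The step that does not work is how you propose to settle the normalization. Checking the families $k=1$ and $n=1$ cannot determine the sign or power of $q$ for general $(k,n,\alpha)$. A factor such as $(-1)^{(k-1)(n-1)}$, or $q^{(\ell(\alpha)-1)(n-1)}$, is trivial in both families, because $k=1$ forces $\alpha=(1)$. Discrepancies of exactly this kind come from convention mismatches. Footnote~\ref{fnsign} records a sign $(-1)^{k(n-1)}$ for the rational shuffle theorem under the definitions of Iraci et al. A $k$-independent sign calibrated at $k=1$ would be $(-1)^{n-1}$, which already misses this by $(-1)^{(k-1)(n-1)}$, for instance when $k=n=2$.

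Appealing to multiplicativity does not close the gap either. On the action side, the only relation between slopes you invoke is $\nabla$-conjugation (Lemma~\ref{lemma:nabla}). It sends $(m,n)$ to $(m+n,n)$ and never changes $n$, so the $n=1$ family cannot reach $n\ge2$. The elements $C_\alpha[-MX^{m,n}]$ with $k\ge2$ can be produced algebraically from primitive ones by commutators in $\mathcal E$. But the parking function side carries no known compatible structure, which is exactly why the compositional theorem is hard. So agreement in your base cases gives no information about the general case.

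The fix is to calibrate nothing. Take Mellit's theorem in the exact normalization of Conjecture~3.3 of Bergeron et al. Then apply Remark~2.11 of D'Adderio et al.\ as an exact identity, with its signs and the central specializations of Proposition~\ref{prop:Eactonsym} built in. This is how the paper obtains the statement.
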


The symmetric function considered in this paper is
$(-1)^{k-1}p_k[-MX^{m,n}]\cdot1$. When $n=1$,
\[
(-1)^{k-1}p_k[-MX^{m,1}]\cdot1
=
(-1)^{k-1}\nabla^m p_k.
\]
In particular, $m=1$ gives
$(-1)^{k-1}\nabla p_k$, which is the algebraic side of the square paths
theorem proved by Sergel \cite{Ser17}.

\subsection{\texorpdfstring{Negu\textcommabelow{t}}{Negut} elements and Theta operators}\label{subsec:negut}
We use the Negu\textcommabelow{t} elements introduced by
Negu\textcommabelow{t}~\cite[Proposition~6.1]{Negut}, in the reformulation of
Blasiak et al.~\cite[Section~3.6]{path}. The formulas below connect
their action on $1$ with the Theta operators of Section~\ref{subsec:symfun}.

Let $\mathcal E^+$ be the subalgebra of $\mathcal E$ generated by
$\Lambda(X^{a,b})$ for $a>0$. Write $\mathcal S$ for the shuffle algebra
and $\psi:\mathcal S\longrightarrow\mathcal E^+$ for the isomorphism
of Blasiak et al.~\cite[Proposition~3.5.1]{path}. Its degree $\ell$
part is represented by Laurent polynomials in $z_1,\ldots,z_\ell$,
modulo the kernel of the map $\mathbf H_{q,t}^{(\ell)}$, which is
defined from the Weyl symmetrization $\boldsymbol{\sigma}$ by
\begin{align*}
\boldsymbol{\sigma}(\phi)
&=
\sum_{\tau\in S_\ell}
\tau\!\left(
\frac{\phi(z)}
     {\prod_{1\leq a<b\leq\ell}(1-z_b/z_a)}
\right),\\
\mathbf H_{q,t}^{(\ell)}(\phi)
&=
\boldsymbol{\sigma}\!\left(
\phi(z)\prod_{1\leq a<b\leq\ell}
\frac{1-qt z_a/z_b}
     {(1-qz_a/z_b)(1-tz_a/z_b)}
\right).
\end{align*}
In particular,
$\psi(z_1^{c_1}\cdots z_\ell^{c_\ell})
=p_1[-MX^{1,c_1}]\cdots p_1[-MX^{1,c_\ell}]$.
For $\gamma=(\gamma_1,\ldots,\gamma_\ell)\in\mathbb Z^\ell$, put
\[
R_\gamma(z):=
\frac{z_1^{\gamma_1}\cdots z_\ell^{\gamma_\ell}}
     {\prod_{j=1}^{\ell-1}(1-qt z_j/z_{j+1})}.
\]
There is a Laurent polynomial $\mu_\gamma$ with
$\mathbf H_{q,t}^{(\ell)}(\mu_\gamma)
 =\mathbf H_{q,t}^{(\ell)}(R_\gamma)$.
The element $D_\gamma:=\psi(\mu_\gamma)\in\mathcal E^+$
is independent of this choice, since $\psi$ is defined on Laurent
polynomials modulo the kernel of $\mathbf H_{q,t}^{(\ell)}$; see
Blasiak et al.~\cite[Eqs.~(51)--(52)]{path}. We use $D_\gamma$ as an operator on
$\Lambda$ through Proposition~\ref{prop:Eactonsym}.

For the power sum of degree $k\geq1$, set
\[
\mathbf1=(1,\ldots,1)\in\mathbb Z^k,\qquad
\gamma^{(0)}=(1^k),\qquad
\gamma^{(i)}=(1^{i-1},2,1^{k-i-1},0)
\quad(1\leq i<k).
\]
Negu\textcommabelow{t}~\cite[Theorem~1.1, Eq.~(1.2)]{Negut}, together with
the reformulation of Blasiak et al.~\cite[Eq.~(42)]{tameLLT}
and the definition of $D_\gamma$ above~\cite[Eqs.~(51)--(52)]{path},
gives
\begin{equation}\label{eqnegut}
(-1)^{k-1}p_k[-MX^{1,d+1}]\cdot1
=
D_{\gamma^{(0)}+d\mathbf1}\cdot1+
\sum_{i=1}^{k-1}(qt)^{k-i}
D_{\gamma^{(i)}+d\mathbf1}\cdot1
\qquad(d\geq0).
\end{equation}

We also use the following skewing identity of
D'Adderio et al.~\cite[Theorem~4.9, Eq.~(14)]{DIIP26}:
for $j\geq0$ and $\alpha\in\mathbb Z^\ell$,
\begin{equation}\label{eqDIIP}
h_j^\perp D_\alpha
=
\sum_{\chi\in\{0,1\}^\ell}
D_{\alpha-\chi}h_{j-|\chi|}^\perp,
\end{equation}
where $h_a^\perp=0$ when $a<0$.

\begin{proposition}\label{propnegut}
For $k\geq1$, $d\geq0$, and $0\leq i<k$,
\[
s_{(k-1)^d}^\perp
\bigl(D_{\gamma^{(i)}+d\mathbf1}\cdot1\bigr)
=
\sum_{\beta\in\mathbb Z_{\geq0}^k, |\beta|=d}
D_{\gamma^{(i)}+\beta}\cdot1.
\]
\end{proposition}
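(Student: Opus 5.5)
The plan is to expand $s_{(k-1)^d}^\perp$ into products of the operators $h_j^\perp$, apply the skewing identity \eqref{eqDIIP} once for each factor, and then identify the resulting signed coefficients as a Kostka number that equals $0$ or $1$.

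\emph{Step 1: expand the skewing operator.} By the Jacobi--Trudi formula \eqref{eqJT} with $\lambda=(k-1)^d$,
\[
s_{(k-1)^d}=\det[h_{k-1-i+j}]_{1\le i,j\le d}=\sum_{\sigma\in S_d}\operatorname{sgn}(\sigma)\prod_{r=1}^d h_{k-1-r+\sigma(r)}.
\]
Since $f\mapsto f^\perp$ is linear and multiplicative, and the operators $h_a^\perp$ commute, $s_{(k-1)^d}^\perp$ is the same signed sum of products $h_{\lambda_1}^\perp\cdots h_{\lambda_d}^\perp$. Here $\lambda_r=k-1-r+\sigma(r)$, and $h_a^\perp=0$ for $a<0$.

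\emph{Step 2: iterate the skewing identity.} Apply \eqref{eqDIIP} with $\alpha\in\mathbb Z^k$ and act on $1$. Because $h_a^\perp 1=\delta_{a,0}$, only $\chi$ with $|\chi|=j$ survive, so
\[
h_j^\perp\bigl(D_\alpha\cdot1\bigr)=\sum_{\chi\in\{0,1\}^k,\ |\chi|=j}D_{\alpha-\chi}\cdot1.
\]
Iterating over the $d$ factors gives
\[
h_{\lambda_1}^\perp\cdots h_{\lambda_d}^\perp\bigl(D_\alpha\cdot1\bigr)=\sum_{A}D_{\alpha-\operatorname{col}(A)}\cdot1.
\]
The sum runs over $d\times k$ matrices $A$ with entries in $\{0,1\}$ and row sums $\lambda_1,\dots,\lambda_d$, and $\operatorname{col}(A)$ is the vector of column sums. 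The number of such matrices with $\operatorname{col}(A)=\nu$ is the coefficient of $x^\nu$ in $e_{\lambda_1}\cdots e_{\lambda_d}(x_1,\ldots,x_k)$.

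\emph{Step 3: compute the signed coefficients.} Reassemble the signed sum over $\sigma$ using the dual Jacobi--Trudi formula. With $\alpha=\gamma^{(i)}+d\mathbf1$, this gives
\[
s_{(k-1)^d}^\perp\bigl(D_\alpha\cdot1\bigr)=\sum_{\nu}c_\nu\,D_{\alpha-\nu}\cdot1,
\]
where $c_\nu$ is the coefficient of $x^\nu$ in $s_{((k-1)^d)'}(x_1,\ldots,x_k)=s_{(d^{k-1})}(x_1,\ldots,x_k)$.

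In $k$ variables the complementation identity gives
\[
s_{(d^{k-1})}(x_1,\ldots,x_k)=(x_1\cdots x_k)^d\,s_{(d)}(x_1^{-1},\ldots,x_k^{-1})=\sum_{\beta\in\mathbb Z_{\ge0}^k,\ |\beta|=d}x^{d\mathbf1-\beta}.
\]
This holds because the complement of $(d^{k-1})$ in the $k\times d$ rectangle is a single row of length $d$. Hence $c_\nu=1$ exactly when $\nu=d\mathbf1-\beta$ with $\beta\ge0$ and $|\beta|=d$, and $c_\nu=0$ otherwise. The bound $\beta_j\le d$ is automatic. Since $\alpha-\nu=\gamma^{(i)}+\beta$, this is the claimed identity.

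The main point needing care is Step 3. One must check that the signed sum over $\sigma$ of $0/1$-matrix counts really is the monomial coefficient of $\det[e_{\lambda_i-i+j}]$ in exactly $k$ variables. Only $k$ columns exist because $\chi\in\{0,1\}^k$, so the truncation to $k$ variables is forced and not an approximation.

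A second point is that \eqref{eqDIIP} is an operator identity, so it can be applied repeatedly to $D_{\alpha-\chi}h^\perp_{j-|\chi|}$. The intermediate terms $h^\perp_{j-|\chi|}$ with $|\chi|<j$ are acted on by the remaining operators and then killed on $1$. Tracking this gives exactly the matrix count in Step 2, because at each stage only the total skewing degree $|\lambda|=d(k-1)$, applied to $1$, matters.
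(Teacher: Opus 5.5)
Your proposal is correct and follows the paper's proof essentially step for step. You reduce \eqref{eqDIIP} on $1$ to $h_j^\perp(D_\alpha\cdot1)=\sum_{|\chi|=j}D_{\alpha-\chi}\cdot1$, iterate and use both Jacobi--Trudi formulas to get the coefficient of $z^\mu$ in $s_{\lambda'}(z_1,\ldots,z_k)$, and then evaluate $s_{(d^{k-1})}(z)=(z_1\cdots z_k)^d h_d(z_1^{-1},\ldots,z_k^{-1})$.
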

\begin{proof}
Since $h_a^\perp1=0$ for $a>0$, applying \eqref{eqDIIP} to $1$
gives, for every $\alpha\in\mathbb Z^k$,
\[
h_j^\perp(D_\alpha\cdot1)
=
\sum_{\chi\in\{0,1\}^k,\ |\chi|=j}
D_{\alpha-\chi}\cdot1.
\]
For $z=(z_1,\ldots,z_k)$, the coefficient of $z^\chi$
in $e_j(z)$ is $1$ precisely when
$\chi\in\{0,1\}^k$ and $|\chi|=j$.
Iterating the preceding identity and using both formulas
in~\eqref{eqJT} therefore gives
\begin{equation}\label{eqschur}
s_\lambda^\perp(D_\alpha\cdot1)
=
\sum_{\mu\in\mathbb Z_{\geq0}^k}
[z^\mu]s_{\lambda'}(z)\,
D_{\alpha-\mu}\cdot1.
\end{equation}
For $\lambda=(k-1)^d$, its conjugate is $(d^{k-1})$, and
\[
s_{(d^{k-1})}(z)
=(z_1\cdots z_k)^d h_d(z_1^{-1},\ldots,z_k^{-1})
=
\sum_{\beta\in\mathbb Z_{\geq0}^k, |\beta|=d}
z^{d\mathbf1-\beta}.
\]
Substituting $\alpha=\gamma^{(i)}+d\mathbf1$ and
$\mu=d\mathbf1-\beta$ into \eqref{eqschur} proves the claim.
\end{proof}

To state the Theta identity we need the following word notation from
D'Adderio et al.~\cite[Definition~5.13]{DIIP26}.
For a composition $\alpha=(\alpha_1,\ldots,\alpha_\ell)$, let
\[
w(\alpha)=\ell^{\alpha_1}
(\ell-1)^{\alpha_2}\cdots1^{\alpha_\ell}.
\]
For a word $u$, let $\operatorname{Des}(u)$ be its \emph{descent composition},
that is, the composition
of the lengths of its maximal weakly increasing blocks; a block ends
where $u_j>u_{j+1}$. Let $\mathcal W_d(\alpha)$ be the set of distinct
words obtained by inserting $d$ copies of the new largest letter
$\ell+1$ into $w(\alpha)$, keeping the old letters in order and
requiring the first letter to remain $\ell$.

\begin{theorem}[{D'Adderio et al.~\cite[Theorem~5.14]{DIIP26}}]\label{thmTheta}
For every composition $\alpha$ and every $d\geq0$,
\[
\Theta_{e_d}(D_\alpha\cdot1)
=
\sum_{u\in\mathcal W_d(\alpha)}
D_{\operatorname{Des}(u)}\cdot1.
\]
\end{theorem}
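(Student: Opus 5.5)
The plan is an induction on the total degree $|\alpha|+d$. The key fact used is that a homogeneous symmetric function $F$ of positive degree is determined by the family $\{h_j^\perp F\}_{j\ge1}$, since $\langle F,h_\mu\rangle=\langle h_{\mu\setminus\mu_1}^\perp h_{\mu_1}^\perp F,1\rangle$. So it suffices to show that both sides of Theorem~\ref{thmTheta} have the same images under $h_j^\perp$ for every $j\ge1$. The base case $d=0$ is immediate: $\mathcal W_0(\alpha)=\{w(\alpha)\}$ and $\operatorname{Des}(w(\alpha))=\alpha$, because $w(\alpha)$ has weakly increasing blocks exactly of lengths $\alpha_1,\dots,\alpha_\ell$ and strict descents $i\to i-1$ between them. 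The case $\alpha=\emptyset$ reduces to the convention $\Theta_{e_d}1=0$ for $d>0$.

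\emph{Right-hand side.} I will skew with \eqref{eqDIIP}. Applied to $1$, as in the proof of Proposition~\ref{propnegut}, it gives
\[
h_j^\perp\Bigl(\sum_{u\in\mathcal W_d(\alpha)}D_{\operatorname{Des}(u)}\cdot1\Bigr)
=\sum_{u\in\mathcal W_d(\alpha)}\ \sum_{\chi\in\{0,1\}^{\ell(\operatorname{Des}(u))},\,|\chi|=j}D_{\operatorname{Des}(u)-\chi}\cdot1 .
\]
Some entries of $\operatorname{Des}(u)-\chi$ may vanish. I will therefore first record, from the shuffle-algebra description of $D_\gamma$ via $R_\gamma$, how to remove a zero entry: a straightening relation that rewrites such a $D_\gamma$ as a combination of $D$'s indexed by compositions.

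\emph{Left-hand side.} I need a commutation rule for $h_j^\perp$ past $\Theta_{e_d}$. This should follow from the plethystic definition $\Theta_f=\mathbf\Pi f[X/M]\mathbf\Pi^{-1}$ together with the standard identity $h_j^\perp\mathbf\Pi=\mathbf\Pi\cdot(\text{translation by }1)$ on $\Lambda$. The expected shape is
\[
h_j^\perp\Theta_{e_d}=\sum_{r\ge0}\Theta_{e_{d-r}}\,\Phi_{j,r},
\]
where $\Phi_{j,r}$ is a combination of operators $h_{j-r}^\perp$, possibly twisted by $e_r[X]$-type corrections. Combining this with \eqref{eqDIIP} applied to $D_\alpha\cdot1$ and with the induction hypothesis for the smaller pairs $(\alpha-\chi,\,d-r)$, the left-hand side becomes an explicit sum of terms $D_\beta\cdot1$.

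\emph{Combinatorial matching.} What remains is to match the two expansions. Lowering one part of $\operatorname{Des}(u)$ by $1$ should correspond to deleting one letter from a weakly increasing block of $u$. Deleting an old letter lowers $\alpha$; deleting an inserted letter $\ell+1$ lowers $d$, which accounts for the terms $\Theta_{e_{d-r}}$. I would try to construct a sign-free bijection between
\[
\{(u,\chi)\}\quad\text{and}\quad\{(r,\chi',u')\},\qquad u'\in\mathcal W_{d-r}(\alpha-\chi'),
\]
that respects the descent compositions. The requirement that the first letter remains $\ell$ should resolve the ambiguity when a block would otherwise start with $\ell+1$. The cancellation coming from zero parts must also be absorbed into this matching.

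I expect this matching step to be the main obstacle, and in particular the bookkeeping for deleting letters whose removal merges two blocks. If the direct commutation approach becomes unwieldy, I would instead work in the shuffle algebra. There one would show that $\Theta_{e_d}$, restricted to $\mathcal E^+\cdot1$, corresponds under $\psi$ to inserting $d$ extra variables with a kernel of the form $\prod(1-qtz_a/z_b)^{-1}$. One would then verify that symmetrizing $R_\alpha$ against this kernel, via $\mathbf H^{(\ell+d)}_{q,t}$, telescopes into the sum of $R_{\operatorname{Des}(u)}$ over the insertion words $u\in\mathcal W_d(\alpha)$.
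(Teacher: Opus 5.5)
The paper does not prove this statement: it quotes it from D'Adderio et al.~\cite[Theorem~5.14]{DIIP26} and uses it only through \eqref{eqThetaGamma}. Your proposal therefore has to stand on its own, and as written it does not. The base cases are fine, and so is the reduction to comparing $h_j^\perp$-images. But the steps that carry the content of the theorem are missing, and one is based on a false identity.

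\textbf{The commutation rule.} You never state $h_j^\perp\Theta_{e_d}$ precisely. The identity you plan to derive it from, $h_j^\perp\mathbf\Pi=\mathbf\Pi\cdot(\text{translation by }1)$, cannot hold as written. The left side lowers degree by exactly $j$, while $F[X]\mapsto F[X+1]=\sum_{i\ge0}h_i^\perp F$ is not homogeneous. The ``correction terms'' are not cosmetic either, because the naive rule $h_j^\perp\Theta_{e_d}=\sum_r\Theta_{e_{d-r}}h_{j-r}^\perp$ already fails in degree $2$. Expanding $p_1^2$ in $\widetilde H_{(2)},\widetilde H_{(11)}$, with $\Pi_{(2)}=1-q$ and $\Pi_{(11)}=1-t$, gives $\Theta_{e_1}e_1=\mathbf\Pi(p_1^2)/M=s_{11}$. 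Hence $h_2^\perp\Theta_{e_1}e_1=0$, whereas $\Theta_{e_1}h_2^\perp e_1+h_1^\perp e_1=1$. Unless the true operators $\Phi_{j,r}$ turn out to be built from $h^\perp$'s again, \eqref{eqDIIP} tells you nothing about $\Phi_{j,r}(D_\alpha\cdot1)$. In that case the left-hand side cannot even be expanded into terms $D_\beta\cdot1$.

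\textbf{The induction does not close, and the matching is absent.} Skewing with \eqref{eqDIIP} produces $D_{\alpha-\chi}\cdot1$ with a zero entry wherever $\alpha_i=\chi_i=1$, including in the first position or an interior position. For such vectors the word model breaks down. The first-letter condition in $\mathcal W_d(\cdot)$ is meaningless when the first entry is $0$. Even for $d=0$ the formula cannot return the vector, since $\operatorname{Des}$ never produces zero parts. So your induction hypothesis, stated for compositions, does not apply to $\Theta_{e_{d-r}}(D_{\alpha-\chi}\cdot1)$. The only zero-removal rule available is the terminal one (Blasiak et al.~\cite[Lemma~3.6.2]{path}). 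The ``straightening relation'' for leading and interior zeros is announced but not given. You would need it, or a weak-composition version of the theorem with a modified word model, on both sides.

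Finally, you do not construct the bijection between $\{(u,\chi)\}$ and $\{(r,\chi',u')\}$, which you yourself identify as the main obstacle. The shuffle-algebra fallback simply assumes the crux: that $\Theta_{e_d}$ acts on $\mathcal E^+\cdot1$ by inserting $d$ variables against a specific kernel. What you have is a plausible research plan, not a proof.
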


\begin{example}
Take $\alpha=(2,1)$ and $d=1$. Then $w(\alpha)=221$,
and the three permitted insertions are
\[
\begin{array}{c|c}
u&\operatorname{Des}(u)\\ \hline
2321&(2,1,1)\\
2231&(3,1)\\
2213&(2,2)
\end{array}
\]
Thus Theorem~\ref{thmTheta} gives
$\Theta_{e_1}(D_{(2,1)}\cdot1)
=(D_{(2,1,1)}+D_{(3,1)}+D_{(2,2)})\cdot1$.
For $k=3$ we have $\gamma^{(1)}=(2,1,0)$. By
Blasiak et al.~\cite[Lemma~3.6.2]{path}, a terminal zero may be omitted
when $D_\gamma$ acts on $1$; thus $D_{(2,1,0)}\cdot1=D_{(2,1)}\cdot1$.
\end{example}

\section{Row strict tableaux and
\texorpdfstring{$(km,kn)$}{(km,kn)}-preference functions}\label{sec:rst}
To compare parking and preference functions with the same
touch statistic, we use row strict tableaux (RSTs). We extend
the construction for Dyck paths by Blasiak et al.~\cite{Delta,path,dens}
to preference functions,
whose tableau rows may begin at negative positions.
Proposition~\ref{propdinvh} identifies the resulting tableau
statistics with area and dinv.

\subsection{The RST model}
We recall the tableau conventions of Blasiak et al.~\cite{Delta,path,dens}.

Let $\alpha\leq\beta$ be vectors in $\mathbb Z^{km}$. The shape $\beta/\alpha$ is a tuple of one-row skew shapes. In row $i$, the right edges of the boxes have integer coordinates
$\alpha_i+1,\alpha_i+2,\ldots,\beta_i$.
We call these the \emph{real boxes}. The boxes immediately to the left and right have right edges at coordinates $\alpha_i$ and $\beta_i+1$, respectively, and are called \emph{virtual boxes}. If $\alpha_i=\beta_i$, the row has no real boxes; its two virtual boxes are regarded as adjacent.

For $\sigma\in S_{km}$, a \emph{$\sigma$-triple} is a triple $(a,b,c)$ in which $a$ and $c$ are adjacent boxes in row $j$, while $b$ is a real box in row $i<j$. The boxes $a$ and $c$ may be virtual. If $\sigma(i)<\sigma(j)$, then $b$ has the same right-edge coordinate as $c$; if $\sigma(i)>\sigma(j)$, then $b$ has the same right-edge coordinate as $a$. We write $h_\sigma(\beta/\alpha)$ for the total number of such triples.

For a totally ordered alphabet $\mathcal A$, let
$\SSYT_{-}(\beta/\alpha,\mathcal A)$ be the set of RSTs:
fillings of the real boxes by elements of $\mathcal A$ that
increase strictly from left to right in every row. We set
$\SSYT_{-}(\beta/\alpha,\mathcal A)=\varnothing$ if
$\alpha\not\leq\beta$. An individual filling is denoted by $S$.
A $\sigma$-triple $(a,b,c)$ is \emph{increasing} in $S$ if
\[
  S(a)<S(b)<S(c),
\]
where the left and right virtual boxes have entries $-\infty$ and $+\infty$, respectively. Let $h_\sigma(S)$ be the number of increasing $\sigma$-triples in $S$.
For $S\in\SSYT_{-}(\beta/\alpha,\mathbb Z_{>0})$, we use
the weight notation $x^{\wt(S)} :=\prod_{u\in\beta/\alpha}x_{S(u)}$.

\begin{proposition}[{Blasiak et al.~\cite[Proposition~4.5.2]{path}}]\label{propNsym}
For $\alpha\leq\beta$ in $\mathbb Z^{km}$ and $\sigma\in S_{km}$, the generating function
\[
  N^\sigma_{\beta/\alpha}(X;q)
  :=\sum_{S\in\SSYT_{-}(\beta/\alpha,\mathbb Z_{>0})}
       q^{h_\sigma(S)}x^{\wt(S)}
\]
is symmetric in $X$.
\end{proposition}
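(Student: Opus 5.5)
We would prove the symmetry of $N^\sigma_{\beta/\alpha}(X;q)$ by reducing it to the symmetry of LLT polynomials. The alternative is a direct involution on two adjacent letters in the spirit of Haglund, Haiman, and Loehr. Since the statement is quoted from Blasiak et al., any argument that rewrites $h_\sigma(S)$ as an inversion count of a tuple of one-row skew shapes is enough.

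\emph{Step 1: from triples to pairs.} Triples through virtual boxes are counted using the conventions $-\infty$ and $+\infty$. Fix rows $i<j$ and a real box $b$ in row $i$. Suppose first $\sigma(i)<\sigma(j)$. Then the unique triple containing $b$ takes $c$ to be the box of row $j$ with the same right-edge coordinate as $b$, and $a$ to be its left neighbour. This triple is increasing if and only if $S(a)<S(b)<S(c)$. Row $j$ is strictly increasing, so this says exactly that $S(b)$ falls into the gap between the two entries of row $j$ at coordinates $x-1$ and $x$, where $x$ is the coordinate of $b$. When $\sigma(i)>\sigma(j)$ the same holds with the gap shifted by one coordinate.

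Summing over $b$, $h_\sigma(S)$ becomes a sum over pairs of rows of a count of pairs of entries $(u,v)$ with $u$ in row $i$, $v$ in row $j$, contents differing by $0$ or $1$ according to $\sigma$, and $S(u)$ and $S(v)$ in a prescribed order. This is the standard translation used for LLT polynomials. Shifting the rows of row $i$ by a fractional amount $\epsilon_{\sigma(i)}$, ordered by $\sigma$, turns these conditions into the usual LLT inversions: pairs of cells with contents differing by less than $1$ and entries in the wrong order.

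\emph{Step 2: identification.} After the $\omega$-type swap that exchanges row strict and column strict fillings, $N^\sigma_{\beta/\alpha}$ is, up to a monomial factor $q^{c}$ with $c$ independent of $S$, the LLT polynomial of the tuple of single-row (equivalently single-column) skew shapes with the shifted contents. We would check that $c$ depends only on $\alpha,\beta,\sigma$ by comparing both counts on the filling obtained by placing all rows in increasing order of a fixed reading word. Negative starting positions $\alpha_i<0$ cause no trouble, since only content differences enter.

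\emph{Step 3: symmetry.} LLT polynomials of tuples of ribbons, in particular of rows, are symmetric; this is Lascoux--Leclerc--Thibon, and it also follows from the Haglund--Haiman--Loehr involution. For a self-contained route, we would restrict to the two letters $r,r+1$, since symmetry only needs invariance under $x_r\leftrightarrow x_{r+1}$. Freezing all other entries, the cells filled with $r$ or $r+1$ decompose into chains of mutually attacking cells linked by the content conditions. On each chain the generating function is a $q$-binomial-type expression that is symmetric under swapping the numbers of $r$'s and $(r+1)$'s. The row strictness constraint forces, within a row, an $r$ immediately followed by an $r+1$; such adjacent pairs are rigid and must be shown to contribute a constant to $h_\sigma$.

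\emph{Main obstacle.} We expect the hardest part to be Step 1. One must verify carefully that the virtual-box conventions, including rows with $\alpha_i=\beta_i$ where the two virtual boxes are adjacent, produce exactly the LLT inversion count up to a shape-dependent constant. Off-by-one errors between the two cases of $\sigma$ are the main danger. We would first test the dictionary on $km=2$ with single-box rows before trusting it in general.
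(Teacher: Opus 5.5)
Your strategy of rewriting $h_\sigma$ as an LLT inversion count and invoking LLT symmetry is the right one. The paper does not reprove this statement; it quotes it from Blasiak et al., where it rests on the symmetry of LLT polynomials. Your shifted coordinates $\tilde x=x+\epsilon_{\sigma(i)}$, with $\epsilon_{\sigma(i)}$ increasing in $\sigma(i)$, do make every $\sigma$-triple a pair $a,c$ of adjacent boxes in row $j$ straddling $\tilde x(b)$. However, two steps do not work as written.

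First, in Step 2 the identification must be with the tuple of single \emph{columns} obtained by transposing each row (contents $-\tilde x$). It cannot be with single rows, and the two are not equivalent. The row-tuple LLT polynomial counts weakly increasing fillings. Already a single row of two boxes gives $N^\sigma_{\beta/\alpha}=e_2$, while the LLT polynomial of that row is $h_2$. Passing through rows would need the $\omega$/superization relation for LLT polynomials, which also replaces $q$ by $q^{-1}$ and is an additional theorem. If your ``$\omega$-type swap'' means transposition, drop ``single-row''.

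Second, the constancy of the exponent $c$ cannot be checked by evaluating both sides on one filling. That only computes $c$ once you already know it does not depend on $S$. The missing observation is that $S(a)<S(c)$ in every $\sigma$-triple, by row strictness and the $\pm\infty$ conventions. Hence a triple never fails both of its inequalities, and
\[
\mathbf 1[S(a)<S(b)<S(c)]=\mathbf 1[S(a)<S(b)]+\mathbf 1[S(b)<S(c)]-1 .
\]
Every pair of real boxes $u,v$ in different rows with $0<\tilde x(v)-\tilde x(u)<1$ lies in exactly one triple. Summing therefore gives $h_\sigma(S)=\operatorname{inv}(T)+c$. Here $\operatorname{inv}(T)$ is the number of such pairs with $S(u)<S(v)$, which is exactly the attacking-inversion count of the column-strict filling $T$ of the transposed columns. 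The constant $c$ is the number of triples with $a,c$ both virtual minus the number with $a,c$ both real. This yields $N^\sigma_{\beta/\alpha}(X;q)=q^{c}\,\mathcal G(X;q)$ for a vertical-strip LLT polynomial $\mathcal G$. Step 3 can then simply cite LLT symmetry. The two-letter sketch there is too vague to stand as a proof and is not needed.
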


\subsection{Connection with
\texorpdfstring{$(km,kn)$}{(km,kn)}-preference functions}
Each vertical run of a preference function gives one tableau
row. We shift the row by $c_j$, which is the height of the lowest
lattice point on or above the main diagonal in its column.
\begin{definition}\label{defalpha}
Let $k,m,n$ be positive integers with $\gcd(m,n)=1$. For $j\geq 0$, set
$c_j:=\left\lceil\frac{nj}{m}\right\rceil$.
For $0\leq x<km$, set $d_{x+1}:=c_x-\frac{nx}{m}+\varepsilon x$, where
$0<\varepsilon<1/(km^2)$. The numbers $d_1,\ldots,d_{km}$ are distinct,
and their relative order does not depend on $\varepsilon$.
Let $\sigma\in S_{km}$ be the permutation such that $\sigma(i)$ is the
position of $d_{km-i+1}$ when $d_1,\ldots,d_{km}$ are listed in
increasing order.

Let $\pi$ be a $(km,kn)$-preference function. For $1\leq i\leq km$, define
integers $\alpha_i\leq\beta_i$ by requiring that the vertical run of $\pi$ in
column $x=km-i$ go from height $c_{km-i}+\alpha_i$ to height
$c_{km-i}+\beta_i$; an empty run has $\alpha_i=\beta_i$.
Place the labels of the North steps in this column, from bottom to top,
in the real boxes of row $i$, from left to right. This gives an
RST $S\in\SSYT_{-}(\beta/\alpha,\mathbb Z_{>0})$.
\end{definition}

\begin{remark}
The value $\alpha_i$ is the signed vertical distance from the lowest
lattice point on or above the main diagonal in column $x=km-i$ to the
lower endpoint of the vertical run. Likewise, $\beta_i$ is the signed
distance to its upper endpoint.
\end{remark}

\begin{remark}
Unlike in the Dyck path case treated by
Blasiak et al.~\cite{Delta,path,dens}, both $\alpha_i$ and
$\beta_i$ can be negative.
Since $\pi$ starts at the origin and $c_0=0$, we have $\alpha_{km}=0$.
\end{remark}

\begin{proposition}\label{propdinvh}
Let $k,m,n$ be positive integers with $\gcd(m,n)=1$, and let $\pi$ be a
$(km,kn)$-preference function. With $\alpha,\beta,\sigma$, and $S$ as
in Definition~\ref{defalpha}, set
\[
r_*:=
\min_{1\leq i\leq km}
\bigl\{m(c_{km-i}+\alpha_i)-n(km-i)\bigr\}.
\]
Then
\[
\dinv(\pi)=h_\sigma(S), \qquad \area(\pi)=\sum_{i=1}^{km}\alpha_i-kr_*.
\]
\end{proposition}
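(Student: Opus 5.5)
The plan is to prove the two identities separately: the area formula by direct computation, and the dinv formula by a bijection between $\sigma$-triples and the four terms in the definition of $\dinv(\pi)$.

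\emph{Area.} The North steps in column $x=km-i$ start at heights $y=c_x+\alpha_i,\ldots,c_x+\beta_i-1$. Their ranks are $my-nx$, and along the run these ranks increase by $m$ per step. The base diagonal corresponds to rank $-ns$, so $-ns=r_*$, since the minimum rank is attained at the bottom of some run. Hence
\[
\lfloor a+s\rfloor=\lfloor (my-nx-r_*)/n\rfloor,
\]
and this counts the lattice cells of column $x$ at height below $y$ that lie above the base diagonal. Summing over North steps column by column telescopes. The result is the total number of cells between the path and the base diagonal, which I will rewrite as $\sum_i \alpha_i$ plus a boundary correction depending only on $r_*$.

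For that correction, note that in each column the number of full cells between the base diagonal and the height $c_x$ is $\lceil nx/m\rceil-\lceil (nx+r_*)/m\rceil$ up to shifts. Summed over $x=0,\ldots,km-1$, and using that $x\mapsto nx \bmod m$ is a bijection on each block of $m$ consecutive columns, this gives $-kr_*$. The final check is that the unit-square count equals $\sum_i\alpha_i$ (the sum of the lower endpoints of the runs relative to $c_x$) rather than a sum involving $\beta$. This holds because the run tops in one column equal the run bottoms in the next column, and the heights $c_x$ exactly track the diagonal.

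\emph{Dinv.} I will follow the Dyck-path proof of Blasiak et al.~\cite{path,dens}, which identifies $h_\sigma$ with the Bergeron et al.\ dinv, and then check how it extends when runs start below the diagonal. The triples will be classified by type.
\begin{enumerate}[(i)]
\item Triples in which $a$ and $c$ are both real boxes of row $j$ and $b$ is real: the ordering rule given by $\sigma$ (via the numbers $d_{x+1}$, i.e.\ the fractional positions relative to the diagonal) matches exactly the attack condition $0\le a_j-a_i<m/n$ or $0<a_j-a_i\le m/n$ between the North step $b$ and the North step of the pair $(a,c)$ at the aligned height. Here "increasing" means exactly that the label comparison holds, so these triples together with the boundary ones count $\tdinv$.
\item Triples that involve virtual boxes: these are increasing regardless of the labels, because of the entries $\pm\infty$. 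Together with the attack counts, they must reproduce $\pdinv-\maxtdinv$.
\end{enumerate}
It therefore suffices to show that the number of label-independent triples, minus the number of attack pairs whose triples are sometimes non-increasing, equals $\pdinv(\pi)-\maxtdinv(\pi)-B(\pi)$. Equivalently, since $h_\sigma$ of the maximal labeling counts all triples:
\[
h_\sigma(\beta/\alpha)=\pdinv(\pi)-B(\pi)+\bigl(\text{attacks}\bigr)\quad\text{appropriately};
\]
more precisely, I aim for $\dinv=\tdinv+(\text{triples with a virtual box that are always increasing})$. This reduces to the path identity
\[
\#\{\text{virtual-box triples}\}=\pdinv(\pi)-\maxtdinv(\pi)-B(\pi).
\]

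I will prove this path identity by induction on the path. The base cases are the Dyck case, which is known from \cite{path}, or the highest path. Moving a single corner (swapping an $EN$ pair) changes both sides by the same amount. The change in $\pdinv$ comes from the arm/leg inequalities of the one added cell, and the change in the triple count can be computed locally from the rows $i$ and $j$ that are involved.

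\emph{Main obstacle.} The hardest step is the correction $B(\pi)$. When a run has its upper endpoint strictly below the main diagonal, so $\beta_i<0$, the rows have virtual boxes at negative coordinates. A row $j$ whose two virtual boxes are adjacent then creates or loses triples in a way with no Dyck analogue. I expect that each such North step causes exactly one missing triple relative to $\pdinv-\maxtdinv$. I would first verify this on the example in Figure~\ref{figPathStats}, where $B=1$, and then prove it by tracking the corner move that pushes a step across the diagonal.
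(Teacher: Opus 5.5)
\textbf{Verdict: the area half works, but the dinv half has a genuine gap, because the reduction you set up is false.}

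\textbf{Area.} Your area computation is fine in outline. Count full cells column by column. The strip to the left of $x$ contributes $\alpha_{km-x}+\bigl(c_x-\lceil (nx+r_*)/m\rceil\bigr)$, and the bracket sums to $-kr_*$ over $x=1,\ldots,km$. This is a column version of the paper's row-wise floor sum. One correction: $\lfloor a_i+s\rfloor$ counts full cells in the \emph{row} of the $i$th step, to its right, not cells of its column; the total is the same.

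\textbf{Dinv.} Your bookkeeping in (i)--(ii) is wrong in two ways.
\begin{enumerate}[(a)]
\item A triple with $a$ and $c$ both real carries two attacks, $(a,b)$ and $(b,c)$. So increasing triples do not count $\tdinv$.
\item A triple with exactly one virtual box, say $a$ virtual and $c$ real, is increasing iff $S(b)<S(c)$. So it is not label-independent; only triples with $a$ and $c$ both virtual are.
\end{enumerate}
As a result your target identities fail. In Example~\ref{examplePathStats}, $\dinv-\tdinv=5-8=-3$ and $\pdinv-\maxtdinv-B=13-15-1=-3$, which cannot be a number of triples. Also $h_\sigma(\beta/\alpha)=12=\pdinv-B$, with no attack term.

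The missing observation is the following. The attacks of $\pi$ are exactly the real pairs $(a,b)$ and $(b,c)$ occurring in $\sigma$-triples, with $b$ the later step. Since $S(a)<S(c)$ always holds, a triple fails at most one of its two inequalities. Hence non-increasing triples correspond bijectively to attacks with non-increasing labels. This gives $h_\sigma(S)=h_\sigma(\beta/\alpha)-(\maxtdinv-\tdinv)$, and reduces everything to the unlabeled identity $h_\sigma(\beta/\alpha)=\pdinv(\pi)-B(\pi)$.

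\textbf{The unlabeled identity.} Your corner-move induction is only asserted (``changes both sides by the same amount''), with no local computation. The base case you would import from Blasiak et al.\ is the Dyck statement $h_\sigma(\beta/\alpha)=\pdinv$, not the identity you formulated.

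The paper needs no induction. It pads $\pi$ to the Dyck path $\pi^+=\mathrm N^{kn}\pi\mathrm E^{km}$, whose Ferrers diagram, and hence path dinv, equals that of $\pi$. It applies the Dyck result to $\pi^+$. It then checks that the only new triples use the $kn$ added boxes in the row of column $x=0$. There $\sigma^+(2km)=1$ forces the old box $b$ to align with the left member $a$. An original step from $(j,y)$ to $(j,y+1)$ aligns with a new choice of $a$ exactly when $y+1<c_j$. So each step counted by $B(\pi)$ contributes exactly one new triple.

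Your intuition that each such step accounts for one triple is right, with two corrections. The triple is missing relative to $\pdinv$, not relative to $\pdinv-\maxtdinv$. And $B$ counts individual steps (boxes with negative right edge), not runs with $\beta_i<0$.
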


\begin{proof}
We first prove the equality for $\dinv(\pi)$. Adjoin $kn$ North steps before
$\pi$ and $km$ East steps after it to obtain the $(2km,2kn)$-Dyck path $\pi^+:=\mathrm N^{kn}\,\pi\,\mathrm E^{km}$.
Let $\alpha^+,\beta^+,\sigma^+$ be its RST data from
Definition~\ref{defalpha}. The Ferrers diagrams used for
$\pdinv(\pi^+)$ and $\pdinv(\pi)$ coincide, so these statistics
are equal. By Blasiak et al.~\cite[Proposition~5.4.4]{path},
\[
h_{\sigma^+}(\beta^+/\alpha^+)
=\pdinv(\pi^+)=\pdinv(\pi).
\]

We claim that
\[
h_{\sigma^+}(\beta^+/\alpha^+)
=h_\sigma(\beta/\alpha)+B(\pi).
\]
The original boxes are shifted $kn$ columns to the right, while
the relative order of the numbers $d_{x+1}$ attached to their rows
is unchanged. Thus
triples involving only original boxes are unchanged. The appended
East steps give empty rows below them and create no triples.

Only the $kn$ boxes added to the row from column $x=0$ can contribute new
triples. This row has index $2km$, and $d_1=0<d_j$ for $j>1$;
hence $\sigma^+(2km)=1$. Every original real box $b$ in a new
triple is therefore aligned with the left member $a$ of an
adjacent pair in this row. The right edges of the new choices for
$a$ have coordinates $0,1,\ldots,kn-1$; the boundary at $kn$ corresponds
to the original left boundary and has already been counted.

An original North step from $(j,y)$ to $(j,y+1)$ gives a shifted
real box whose right edge has coordinate $kn+y+1-c_j$. Since $y\geq0$
and $c_j\leq kn$, this coordinate is positive. It therefore
aligns with one of the new choices for $a$ exactly when
\[
kn+y+1-c_j\leq kn-1
\quad\Longleftrightarrow\quad
y+1<c_j
\quad\Longleftrightarrow\quad
m(y+1)-nj<0.
\]
The last equivalence uses $c_j=\lceil nj/m\rceil$ and the fact
that $y+1$ is an integer. The last inequality says that the upper
endpoint $(j,y+1)$ of the North step lies strictly below the main
diagonal. Thus each step counted by $B(\pi)$
produces exactly one new triple, and there are no other new triples. This proves the claim and hence
\[
h_\sigma(\beta/\alpha)=\pdinv(\pi)-B(\pi).
\]

We next account for the labels. In every $\sigma$-triple $(a,b,c)$,
row strictness and the conventions for virtual boxes give $S(a)<S(c)$.
The triple fails to be increasing precisely when
$S(a)\geq S(b)$ or $S(b)\geq S(c)$. These two cases cannot occur
together, and a virtual box cannot give a failed inequality.

We claim that the attacks of $\pi$ correspond bijectively to the pairs
$(a,b)$ and $(b,c)$ of real boxes in $\sigma$-triples $(a,b,c)$, the step
of $b$ being the later step of the attack. For a real box $u$, write
$\operatorname{rank}(u)$ for the rank of its North step. The real box with
right-edge coordinate $p$ in row $km-x$ comes from the North step that
begins at $(x,c_x+p-1)$, so its rank is $m(p-1)+mc_x-nx$. Let $u$ be a
real box in row $j$ and $v$ a real box in row $i<j$, with right-edge
coordinates $p_u$ and $p_v$. The step of $v$ lies in a column to the right
of the step of $u$, so it comes later along $\pi$, and
\[
\operatorname{rank}(v)-\operatorname{rank}(u)=m(p_v-p_u+\theta),
\qquad
\theta:=\Bigl(c_{km-i}-\frac{n(km-i)}{m}\Bigr)
-\Bigl(c_{km-j}-\frac{n(km-j)}{m}\Bigr).
\]
Here $-1<\theta<1$ and $\theta\in\frac1m\mathbb Z$. Since
$d_{km-i+1}-d_{km-j+1}=\theta+\varepsilon(j-i)$ and
$0<\varepsilon(j-i)<1/m$, Definition~\ref{defalpha} gives
$\sigma(i)>\sigma(j)$ exactly when $\theta\geq0$. Hence
$0\leq\operatorname{rank}(v)-\operatorname{rank}(u)<m$ holds exactly when
either $p_v=p_u$ and $\sigma(i)>\sigma(j)$, or $p_v=p_u+1$ and
$\sigma(i)<\sigma(j)$; by the definition of $\sigma$-triples, this says
that $(u,v)$ is the pair $(a,b)$ of a $\sigma$-triple. In the same way,
$0<\operatorname{rank}(u)-\operatorname{rank}(v)\leq m$ holds exactly when
$(v,u)$ is the pair $(b,c)$ of a $\sigma$-triple. Since an area word entry
is the rank divided by $n$, these are the two kinds of attacks in the
definition of $\maxtdinv(\pi)$. Two North steps in the same column never
form an attack, because the later one has a rank that is larger by a
positive multiple of $m$. This proves the claim. Under this bijection, an
attack has increasing labels exactly when $S(a)<S(b)$, respectively
$S(b)<S(c)$. Thus $\maxtdinv(\pi)$ counts
all these pairs, while $\tdinv(\pi)$ counts those with strictly
increasing labels. Consequently,
\[
\maxtdinv(\pi)-\tdinv(\pi)
=h_\sigma(\beta/\alpha)-h_\sigma(S).
\]
Combining this equality with $h_\sigma(\beta/\alpha)=\pdinv(\pi)-B(\pi)$ gives
\begin{align*}
h_\sigma(S)
&=h_\sigma(\beta/\alpha)
  -\bigl(\maxtdinv(\pi)-\tdinv(\pi)\bigr)\\
&=\pdinv(\pi)-B(\pi)-\maxtdinv(\pi)+\tdinv(\pi)\\
&=\dinv(\pi).
\end{align*}
It remains to prove the equality for $\area(\pi)$. Recall that a North step beginning at
$(j,y)$ has rank $my-nj$. By the definition of area, its contribution
relative to the lowest rank $r_*$ is
\[
\left\lfloor\frac{my-nj-r_*}{n}\right\rfloor
=
\left\lfloor\frac{my-r_*}{n}\right\rfloor-j.
\]
The starting heights of the North steps are
$0,1,\ldots,kn-1$, each occurring once, while column $x=km-i$
contains $\beta_i-\alpha_i$ North steps. Therefore
\[
\area(\pi)
=
\sum_{y=0}^{kn-1}
\left\lfloor\frac{my-r_*}{n}\right\rfloor
-
\sum_{i=1}^{km}(km-i)(\beta_i-\alpha_i).
\]

Because $\gcd(m,n)=1$, the residues of $my$ modulo $n$, as $y$
ranges from $0$ to $kn-1$, contain each residue $k$ times. It follows
that
\[
\sum_{y=0}^{kn-1}
\left(
\left\lfloor\frac{my-r_*}{n}\right\rfloor
-
\left\lfloor\frac{my}{n}\right\rfloor
\right)
=
k\sum_{r=0}^{n-1}
\left\lfloor\frac{r-r_*}{n}\right\rfloor
=-kr_*.
\]
For the final equality, increasing $r_*$ by one decreases exactly one
of the $n$ terms by one, and the sum is zero when $r_*=0$.

For each $y$, the number $\lfloor my/n\rfloor$ counts the integers
$1\leq j<km$ satisfying $nj\leq my$. For a fixed $j$, this condition
holds for exactly $kn-c_j$ values of $y$ in
$\{0,\ldots,kn-1\}$. Hence
\[
\sum_{y=0}^{kn-1}\left\lfloor\frac{my}{n}\right\rfloor
=
\sum_{j=1}^{km-1}(kn-c_j)
=
\sum_{i=1}^{km-1}(kn-c_{km-i}).
\]
Let $y_i:=c_{km-i}+\alpha_i$ and $y_i':=c_{km-i}+\beta_i$ be the heights
of the lower and upper endpoints of the run in column $km-i$. The East
step after this run ends at the lower endpoint of the run in column
$km-i+1$, so $y_i'=y_{i-1}$ for $2\leq i\leq km$, and $y_1'=kn$. Thus
\begin{align*}
\sum_{i=1}^{km}(km-i)(\beta_i-\alpha_i)
&=\sum_{i=1}^{km}(km-i)(y_i'-y_i)\\
&=(km-1)kn-\sum_{i=1}^{km-1}y_i\\
&=(km-1)kn
  -\sum_{i=1}^{km-1}(c_{km-i}+\alpha_i).
\end{align*}
Substituting these identities into the area formula yields
\[
\area(\pi)
=
\sum_{i=1}^{km-1}\alpha_i-kr_*
=
\sum_{i=1}^{km}\alpha_i-kr_*,
\]
where the last equality uses $\alpha_{km}=0$.
\end{proof}

Together with Proposition~\ref{propNsym}, Proposition~\ref{propdinvh}
shows that, for a fixed path, the sum of $q^{\dinv(\pi)}x_\pi$ over all
labelings of the path is the symmetric function
$N^\sigma_{\beta/\alpha}(X;q)$.

\begin{example}\label{example68}
Continue with the path $\pi$ of Example~\ref{examplePathStats}.
Figure~\ref{figPathTableau} shows its associated RST $S$ beside
the path. The column $x=0$ gives the top row $i=6$, and
$x=5$ gives the bottom row $i=1$. The resulting data are
\[
\alpha=(0,-2,-1,0,0,0),
\beta=(1,1,0,0,1,2),
(\sigma(1),\ldots,\sigma(6))=(4,6,2,3,5,1).
\]

\begin{center}
\begin{minipage}{\linewidth}
\centering
\begin{tikzpicture}[x=5.5mm,y=5.5mm]
  \draw[step=1,gray!35,very thin] (0,0) grid (6,8);
  \draw[blue,dashed,line width=.85pt] (0,0)--(6,8);
  \draw[blue!65,densely dashed,line width=.7pt] (1,0)--(6,6.6667);
  \draw[line width=1pt]
    (0,0)--(0,2)--(1,2)--(1,3)--(2,3)--(3,3)
    --(3,4)--(4,4)--(4,7)--(5,7)--(5,8)--(6,8);
  \foreach \x/\y/\z in
    {0/.5/2,0/1.5/6,1/2.5/3,3/3.5/1,
     4/4.5/4,4/5.5/5,4/6.5/7,5/7.5/8}
    \node[font=\scriptsize,fill=white,inner sep=.6pt] at ({\x+.5},\y) {$\z$};
  \node[font=\small] at (3,-.7) {$\pi$};

  \draw[blue!55,densely dashed,line width=.4pt]
    (11,-.35)--(11,8.35);
  \node[blue!70,font=\scriptsize,anchor=south]
    at (11,8.43) {$x=0$};
  \def\RBox#1#2#3{%
    \draw ({10+#2},#1) rectangle ({11+#2},{#1+1});%
    \node[font=\scriptsize] at ({10.5+#2},{#1+.5}) {$#3$};}
  \def\VLeft#1#2{%
    \node[font=\tiny,anchor=center,inner sep=0pt] at ({10.5+#2},{#1+.5}) {$-\infty$};}
  \def\VRight#1#2{%
    \node[font=\tiny,anchor=center,inner sep=0pt] at ({10.5+#2},{#1+.5}) {$+\infty$};}

  \VLeft{7}{0}\RBox{7}{1}{2}
  \RBox{7}{2}{6}\VRight{7}{3}

  \VLeft{5.6}{0}\RBox{5.6}{1}{3}
  \VRight{5.6}{2}

  \VLeft{4.2}{0}\VRight{4.2}{1}
  \draw (11,4.2)--(11,5.2);

  \VLeft{2.8}{-1}\RBox{2.8}{0}{1}
  \VRight{2.8}{1}

  \VLeft{1.4}{-2}\RBox{1.4}{-1}{4}
  \RBox{1.4}{0}{5}\RBox{1.4}{1}{7}
  \VRight{1.4}{2}

  \VLeft{0}{0}\RBox{0}{1}{8}
  \VRight{0}{2}

  \node[font=\small] at (10.5,-.7) {$S$};
\end{tikzpicture}
\captionof{figure}{Example of a $(6,8)$-preference function and its associated RST}\label{figPathTableau}
\end{minipage}
\end{center}

The run in column $x=4$ goes from $y=4$ to $y=7$.
Since $\lceil 4n/m\rceil=\lceil 16/3\rceil=6$, it gives row $i=2$
of $S$, whose boxes have right edge coordinates $-1,0,1$
and entries $4,5,7$. This explains $\alpha_2=-2$
and $\beta_2=1$. The empty run in column $x=2$ gives
the empty row $i=4$, with $\alpha_4=\beta_4=0$.

The minimum North step rank is $r_*=-4$. In tableau coordinates,
Proposition~\ref{propdinvh} gives
\[
\area(\pi)
=(0-2-1+0+0+0)-2(-4)
=5.
\]
This agrees with the area calculation in Example~\ref{examplePathStats}.

The same example gives $\pdinv(\pi)=13$, $B(\pi)=1$,
$\maxtdinv(\pi)=15$, and $\tdinv(\pi)=8$.
The proof of Proposition~\ref{propdinvh} therefore gives
$13-1=12$ $\sigma$-triples of $\beta/\alpha$.
Exactly $15-8=7$ of them fail the increasing label condition,
leaving $5$ increasing triples in $S$. The triple with entries
$(2,3,6)$ is among these $5$. Consequently,
\[
\dinv(\pi)=13-1-15+8=5=h_\sigma(S).
\]
\end{example}

\section{Main proof}\label{sec:main}
We first identify $E_{k,\ell}[-MX^{m,n}]\cdot1$ with a sum over
parking functions of touch $\ell$ (Theorem~\ref{thmEnkPF}), using Mellit's
compositional shuffle theorem (Theorem~\ref{kmknshuffle}).
We then encode the row strict tableaux of Section~\ref{sec:rst} by words in
$U,V,E$. Proposition~\ref{propqcyc} compares words with the
same profile but different initial states and gives the
sum over preference functions in Theorem~\ref{thmEnkPr}.
Summing over $\ell$ proves Theorem~\ref{thmmain}.
The final subsection applies the combinatorial skewing argument to this
result and then uses the Negu\textcommabelow{t} element identities of Section~\ref{subsec:negut}
to prove Theorem~\ref{thmDelta}.

\subsection{A touch refinement}
As in Sergel's proof~\cite{Ser17}, we first evaluate
$E_{k,\ell}[-MX^{m,n}]\cdot1$ using parking functions.
\begin{theorem}\label{thmEnkPF}
For $1\le\ell\le k$,
\begin{equation}\label{eqEnkPF}
E_{k,\ell}[-MX^{m,n}]\cdot 1
=
\sum_{\substack{\pi\in\PF_{km,kn}\\
                 \touch(\pi)=\ell}}
q^{\dinv(\pi)}t^{\area(\pi)}x_\pi.
\end{equation}
\end{theorem}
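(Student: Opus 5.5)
The proof combines Proposition~\ref{propEtoC} with Mellit's compositional shuffle theorem, Theorem~\ref{kmknshuffle}. By Proposition~\ref{propEtoC},
\[
E_{k,\ell}=\sum_{\substack{\alpha\vDash k\\ \ell(\alpha)=\ell}}C_\alpha[X;q].
\]
The map $f\mapsto f[-MX^{m,n}]$ is the composite of the plethysm $f\mapsto f[-MX]$, which is linear over $\mathbb Q(q,t)$, and the algebra isomorphism $f\mapsto f(X^{m,n})$. The coefficients $q$ appearing in $C_\alpha[X;q]$ are scalars of $\mathbb Q(q,t)$, so they pass through both maps. The action of $\mathcal E$ on $\Lambda$ from Proposition~\ref{prop:Eactonsym} is also linear. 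Hence
\[
E_{k,\ell}[-MX^{m,n}]\cdot1=\sum_{\substack{\alpha\vDash k\\ \ell(\alpha)=\ell}}C_\alpha[-MX^{m,n}]\cdot1 .
\]

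Next, apply Theorem~\ref{kmknshuffle} to each summand. This turns the right side into
\[
\sum_{\substack{\alpha\vDash k\\ \ell(\alpha)=\ell}}\ \sum_{\substack{\pi\in\PF_{km,kn}\\ \comp(\pi)=\alpha}}q^{\dinv(\pi)}t^{\area(\pi)}x_\pi .
\]
Every $(km,kn)$-parking function has a touch composition $\comp(\pi)\vDash k$, and it is unique. So the double sum equals the single sum over all $\pi\in\PF_{km,kn}$ with $\ell(\comp(\pi))=\ell$.

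It remains to check that $\ell(\comp(\pi))=\touch(\pi)$ for Dyck paths; this is the only step needing care. For a Dyck path $s=0$, so the base diagonal is the main diagonal, and $\touch(\pi)$ counts North steps starting on $y=\frac nm x$. Since $\gcd(m,n)=1$, the lattice points on this diagonal are exactly $(jm,jn)$ for $0\le j\le k$. The path lies weakly above the diagonal, so it must leave each intersection point $(j_rm,j_rn)$ with $r<\ell$ by a North step; otherwise an East step would go strictly below. The endpoint $(km,kn)$ is not the start of any step. Conversely, any North step beginning on the diagonal begins at one of these intersection points. This gives a bijection between the touching North steps and $\{j_0,\dots,j_{\ell-1}\}$, so $\touch(\pi)=\ell$, as already observed in Section~\ref{subsec:comb}.

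One point to confirm is that the $\dinv$ and $\area$ statistics of Section~\ref{subsec:comb}, restricted to parking functions, match those in Mellit's theorem. For parking functions $B(\pi)=0$, and $\dinv$ reduces to the statistic of Bergeron et al., as noted after Example~\ref{examplePathStats}. Also $\area$ is the usual count of full cells above the diagonal. With this, \eqref{eqEnkPF} follows.
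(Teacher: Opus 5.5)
Your proposal is correct and takes the same approach as the paper. You expand $E_{k,\ell}$ via Proposition~\ref{propEtoC}, apply the linear map $f\mapsto f[-MX^{m,n}]\cdot1$ and Theorem~\ref{kmknshuffle} termwise, and regroup using that each parking function has a unique touch composition with $\touch(\pi)$ parts; your extra check of $\touch(\pi)=\ell(\comp(\pi))$ only spells out what the paper already observes in Section~\ref{subsec:comb}.
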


\begin{proof}
By Proposition~\ref{propEtoC}, $E_{k,\ell}$ is the sum of
$C_\alpha[X;q]$ over all $\alpha\vDash k$ with $\ell(\alpha)=\ell$.
Apply the linear map $f\mapsto f[-MX^{m,n}]\cdot1$ and
Theorem~\ref{kmknshuffle} to each term. Every $\pi\in\PF_{km,kn}$ has
exactly one touch composition $\comp(\pi)\vDash k$, and
$\touch(\pi)$ is its number of parts (Section~\ref{subsec:comb}).
This gives~\eqref{eqEnkPF}.
\end{proof}

The preference function counterpart extends Sergel's touch
refinement~\cite[Theorem~4.3]{Ser17} to $(km,kn)$-paths.
\begin{theorem}\label{thmEnkPr}
For $1\le\ell\le k$,
\begin{equation}\label{eqEnkPr}
\frac{[km]_q}{[\ell]_q}
E_{k,\ell}[-MX^{m,n}]\cdot 1
=
\sum_{\substack{\pi\in\Pref_{km,kn}\\
                 \touch(\pi)=\ell}}
q^{\dinv(\pi)}t^{\area(\pi)}x_\pi.
\end{equation}
\end{theorem}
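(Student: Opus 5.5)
By Theorem~\ref{thmEnkPF}, the statement is equivalent to the combinatorial identity \eqref{eqmaincomb}:
\[
[\ell]_q\sum_{\substack{\pi\in\Pref_{km,kn}\\ \touch(\pi)=\ell}}q^{\dinv(\pi)}t^{\area(\pi)}x_\pi
=[km]_q\sum_{\substack{\pi\in\PF_{km,kn}\\ \touch(\pi)=\ell}}q^{\dinv(\pi)}t^{\area(\pi)}x_\pi .
\]
The plan is to prove this identity, fixing the underlying path data and working with row strict tableaux via Proposition~\ref{propdinvh}.

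First I will organize preference functions by \emph{cyclic rotation}. A $(km,kn)$-path of touch $\ell$ can be cut at any lattice point and the two pieces swapped, provided the cut is made on a line parallel to the diagonal through a vertex. Each preference function arises from a parking function by such a cyclic shift relative to its base diagonal. Concretely, cutting $\pi$ at a North step on the base diagonal and rotating gives a Dyck path, hence a parking function with the same area, since area is measured from the base diagonal.

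Conversely, a parking function of touch $\ell$ lifts to a family of preference functions. These are indexed by the starting column (or state) at which the closed walk is read. Doubling the cyclic word to handle periodicity, there are $km$ rotations of the column sequence, and $\ell$ of them are Dyck (start at a touch point). This already matches the ratio $[km]_q/[\ell]_q$ at $q=1$.

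To refine this to $q$, I will encode the RST data $(\alpha,\beta,\sigma)$ of Definition~\ref{defalpha} as a closed walk on the $km$ states indexed by the order of the numbers $d_{x+1}$. Each column contributes a letter $U$ or $V$ (an up or down step in row shift) and each label an $E$. Rotating the starting state permutes the columns cyclically. The effect on $\sigma$ is a cyclic relabelling, and $h_\sigma(S)$ changes by a controllable amount.

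The key claim, Proposition~\ref{propqcyc}, is this. For a fixed cyclic word, let $A$ be the multiset of $h_\sigma(S)$-values over the $km$ starting states, and let $B$ be the multiset over the $\ell$ Dyck starting states. Then the generating functions satisfy $[\ell]_q\sum_A q^{h}=[km]_q\sum_B q^{h}$. I would prove this by showing that as the starting state advances by one step along the walk, dinv increases by exactly one, except when passing a touch point. At a touch point it drops by $km-1$ (resp. resets), so the dinv values along each arc between consecutive touches form consecutive runs. Summing these runs gives $q$-integers that redistribute into $[km]_q/[\ell]_q$ times the Dyck sum. This mirrors Sergel's argument in the square case.

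Area is handled by Proposition~\ref{propdinvh}: $\sum\alpha_i-kr_*$ is invariant under rotation, since the shift of $\alpha$ by the rotation is compensated by the change in $r_*$.

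The main obstacle is precisely the step of showing that $h_\sigma(S)$ shifts by exactly one under a single rotation step. One must track how the $\sigma$-triples involving the row moved from top to bottom change, including the $B(\pi)$ boundary triples that arise when rows start at negative positions. I would attack this with the $\pi^+$ embedding trick from the proof of Proposition~\ref{propdinvh}. This reduces the count to Dyck paths in a larger rectangle, where the triple count against a single moved row can be computed column by column using the rank differences $\theta$.

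Finally, summing \eqref{eqmaincomb} against Theorem~\ref{thmEnkPF} gives \eqref{eqEnkPr}.
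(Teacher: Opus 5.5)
\paragraph{The $q$-refinement along rotation orbits fails.}
Your reduction to \eqref{eqmaincomb} via Theorem~\ref{thmEnkPF} is the same as the paper's. Your rotation count is also correct at $q=1$: cyclically rotating the columns preserves labels, area and touch, and exactly $\ell$ of the $km$ rotations are Dyck.

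The gap is the $q$-refinement. You claim that for a single rotation orbit $[\ell]_q\sum_A q^{h}=[km]_q\sum_B q^{h}$, with dinv rising by one at each rotation step between touches. This claim is false, so no tracking of triples, with or without the $\pi^+$ embedding, can establish it.

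Take $k=3$, $m=n=1$, and the parking function $NNENEE$ with labels $1,2$ in column $0$ and $3$ in column $1$. Its area word is $(0,1,1)$ and $\dinv=1$. Its two rotations are:
\begin{itemize}
\item $NEENNE$ with columns $(3),\varnothing,(1,2)$ and area word $(0,-1,0)$;
\item $ENNENE$ with columns $\varnothing,(1,2),(3)$ and area word $(-1,0,0)$.
\end{itemize}
By $\dinv=\tdinv+\#\{i:a_i<0\}$, both rotations have $\dinv=2$. This orbit therefore contributes $q+2q^2\neq[3]_q\,q$.

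Now label the same path with $1,3$ in column $0$ and $2$ in column $1$. Its rotations, in the same order, have dinv $0,3,1$, contributing $1+q+q^3\neq[3]_q$. Only the union of the two orbits, $1+2q+2q^2+q^3=[3]_q(1+q)$, satisfies the identity. In particular, dinv does not increase by one per rotation step in either orbit, and in either direction.

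\paragraph{What is missing.}
With alphabet $\{1,2,3\}$, these six labeled paths are exactly the proper words of one profile in the sense of Section~\ref{subsec:words}. There $\phi_0=1+q$ and $\phi_1=q+2q^2+q^3=q[2]_q\phi_0$.

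So you need to work with this coarser class. Fix the multiset of transitions $U_{r,i},V_{r,i},E_r$ between states. A state must record both the level and the label currently inspected, which your $km$-state encoding does not. With the profile fixed, the words with a given starting state are all the corresponding Eulerian circuits, not just rotations of one word.

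The paper then proceeds in three steps:
\begin{itemize}
\item It shows that $J=h_\sigma-I$ is constant on $\mathfrak W_a(\mathfrak p)$, using the exchange $ACB\mapsto BCA$ and the potential $G$.
\item It computes $\sum q^{I}$ by a $q$-weighted BEST theorem.
\item It solves the tree recurrence to get $T_{r,i}=K_{\mathfrak p}\,q^{\sum_{j\geq1}g_{r+jm,i}}$.
\end{itemize}
Combining the exponents gives $[\ell]_q\phi_a=q^{P_a}[\nu_a]_q\phi_0$. The redistribution into $[km]_q/[\ell]_q$ happens only across the whole profile class, through these $q$-enumerations, and not along individual rotation orbits.
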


By Theorem~\ref{thmEnkPF}, Theorem~\ref{thmEnkPr} is
equivalent to
\begin{equation}\label{eqmaincomb}
[\ell]_q
\sum_{\substack{\pi\in\Pref_{km,kn}\\
                 \touch(\pi)=\ell}}
q^{\dinv(\pi)}t^{\area(\pi)}x_\pi
=
[km]_q
\sum_{\substack{\pi\in\PF_{km,kn}\\
                 \touch(\pi)=\ell}}
q^{\dinv(\pi)}t^{\area(\pi)}x_\pi.
\end{equation}

We now deduce the $(km,kn)$-square paths theorem from
Theorem~\ref{thmEnkPr}, which is proved in Section~\ref{subsec:words}.
\begin{theorem}\label{thmpn}
For all positive integers $k,m,n$ with $\gcd(m,n)=1$,
\begin{equation}\label{eqpn}
(-1)^{k-1}\frac{[km]_q}{[k]_q}
p_k[-MX^{m,n}]\cdot 1
=
\sum_{\pi\in\Pref_{km,kn}}
q^{\dinv(\pi)}t^{\area(\pi)}x_\pi.
\end{equation}
\end{theorem}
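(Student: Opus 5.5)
The plan is to deduce Theorem~\ref{thmpn} directly from Theorem~\ref{thmEnkPr} (taken as proved in Section~\ref{subsec:words}) together with the Can--Loehr expansion, Proposition~\ref{propptoEnk}. No further combinatorics is needed. The argument is linearity of the map $f\mapsto f[-MX^{m,n}]\cdot1$ followed by a partition of $\Pref_{km,kn}$ according to the touch statistic.

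\textbf{Step 1: expand the power sum.} Start from Proposition~\ref{propptoEnk},
\[
(-1)^{k-1}p_k=\sum_{\ell=1}^k\frac{[k]_q}{[\ell]_q}E_{k,\ell}.
\]
The coefficients $[k]_q/[\ell]_q$ lie in $\mathbb Q(q)\subset\mathbb Q(q,t)$. The plethystic substitution $f\mapsto f[-MX]$ is $\mathbb Q(q,t)$-linear. The isomorphism $\Lambda\to\Lambda(X^{m,n})$ and the action of Proposition~\ref{prop:Eactonsym} are also linear over the scalars $\mathbb Q(q,t)$. Applying the composite map therefore gives
\[
(-1)^{k-1}p_k[-MX^{m,n}]\cdot1=\sum_{\ell=1}^k\frac{[k]_q}{[\ell]_q}E_{k,\ell}[-MX^{m,n}]\cdot1 .
\]

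\textbf{Step 2: apply Theorem~\ref{thmEnkPr} term by term.} Multiply both sides by $[km]_q/[k]_q$. The factor $[k]_q$ cancels, and each summand becomes
\[
\frac{[km]_q}{[\ell]_q}E_{k,\ell}[-MX^{m,n}]\cdot1 .
\]
By \eqref{eqEnkPr}, this summand equals the generating function of preference functions with $\touch(\pi)=\ell$.

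\textbf{Step 3: sum over the touch statistic.} Section~\ref{subsec:comb} shows that every $\pi\in\Pref_{km,kn}$ satisfies $1\le\touch(\pi)\le k$. This uses $\gcd(m,n)=1$: the base diagonal meets the heights of North-step starting points at most $k$ times, and the minimum defining $s$ is attained at least once. Hence the sets $\{\pi:\touch(\pi)=\ell\}$ for $1\le\ell\le k$ partition $\Pref_{km,kn}$. Summing the identities of Step 2 over $\ell$ gives exactly \eqref{eqpn}.

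The deduction itself is routine. The real obstacle sits in Theorem~\ref{thmEnkPr}, equivalently \eqref{eqmaincomb}, which the paper defers to the word and closed-walk comparison of Section~\ref{subsec:words}. Within this proof, the only points needing care are two checks: that the scalar factors commute past the operator action (they do, since they lie in $\mathbb Q(q,t)$), and that the touch range $1\le\touch(\pi)\le k$ is exhaustive, so that no preference function is missed.
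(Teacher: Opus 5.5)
Your proposal is correct and matches the paper's proof: apply the linear map $f\mapsto f[-MX^{m,n}]\cdot1$ to the Can--Loehr expansion (Proposition~\ref{propptoEnk}), multiply by $[km]_q/[k]_q$, identify each term by Theorem~\ref{thmEnkPr}, and sum over $\ell$ using $1\le\touch(\pi)\le k$ to partition $\Pref_{km,kn}$. As you note, all the substantive work lies in Theorem~\ref{thmEnkPr}.
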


\begin{proof}
Apply the linear map
$f\mapsto f[-MX^{m,n}]\cdot1$ to Proposition~\ref{propptoEnk}:
\[
(-1)^{k-1}p_k[-MX^{m,n}]\cdot1
=
\sum_{\ell=1}^k
\frac{[k]_q}{[\ell]_q}
E_{k,\ell}[-MX^{m,n}]\cdot1.
\]
Multiplication by $[km]_q/[k]_q$ changes the coefficient
of the term indexed by $\ell$ to $[km]_q/[\ell]_q$.
Theorem~\ref{thmEnkPr} identifies that term with the
weighted sum over preference functions of touch $\ell$.
Thus
\[
(-1)^{k-1}\frac{[km]_q}{[k]_q}
p_k[-MX^{m,n}]\cdot1
=
\sum_{\ell=1}^k
\sum_{\substack{\pi\in\Pref_{km,kn}\\\touch(\pi)=\ell}}
q^{\dinv(\pi)}t^{\area(\pi)}x_\pi.
\]
Every preference function $\pi$ satisfies
$1\leq\touch(\pi)\leq k$ by Section~\ref{subsec:comb}. Hence the inner
sums partition $\Pref_{km,kn}$, giving the right-hand side
of~\eqref{eqpn}.
\end{proof}

\subsection{Encoding row strict tableaux by words}\label{subsec:words}
Fix the finite alphabet $\{1,\ldots,L\}$; that is, we work with the
variables $x_1,\ldots,x_L$. In row $km-x$, let $y$ denote the
coordinate of the left edge of a real box or an integer boundary
between boxes, and define the rank of that boundary by
\[
\rho(x,y):=m(c_x+y)-nx.
\]
This formula applies to the boundary of an empty row and to the
terminal boundary of a nonempty row. We do not assign ranks to
virtual boxes. Set
\[
\rho_*:=\min_{0\leq x<km}\rho(x,\alpha_{km-x})=r_*.
\]
The minimum on the left agrees with the minimum rank $r_*$ of a North
step in the path. Indeed, along a nonempty row ranks increase by $m$
at each real box, so its first North step has the smallest rank in
that row. An empty row followed by another row has a successor
boundary of rank $n$ less. Each boundary in a terminal string of
empty rows has rank at least $n>0$, whereas the first North step has
rank at most $0$. Thus an empty row cannot attain the minimum.

A \emph{state} is a pair $(r,i)$, where $r=\rho(x,y)-\rho_*$ is the
\emph{level} of the current boundary and $i$ is
the label currently inspected. The value $i=L+1$ marks the end
of a row. The letters $U_{r,i}$, $V_{r,i}$, and $E_r$ record
transitions between states, and a sequence of these letters is a
\emph{word}. A word is a \emph{walk} if each letter departs from the
state at which the previous letter arrives, and a walk is \emph{closed}
if its last letter arrives at the state from which its first letter
departs. We call a word \emph{proper} if it is obtained by
scanning the RST associated with a $(km,kn)$-preference function as
in Definition~\ref{defalpha}.

Scan the rows from top to bottom, so that $x$ runs from $0$ to
$km-1$. In row $km-x$, start at $y=\alpha_{km-x}$ and inspect
$i=1,\ldots,L$ in order. The boundary moves one unit to the right
when the entry $i$ occurs. At state $(r,i)$, record the following
letter and transition:
\begin{enumerate}
    \item $U_{r,i}$ records that the real box immediately to the
    right of the current boundary contains $i$. Move the boundary
    one unit to the right. This letter makes the transition
    \[
    U_{r,i}:=\bigl((r,i)\longrightarrow(r+m,i+1)\bigr).
    \]

    \item $V_{r,i}$ records that the real box immediately to the
    right of the current boundary does not contain $i$, or that no
    real box lies to its right. Keep the boundary fixed. This
    letter makes the transition
    \[
    V_{r,i}:=\bigl((r,i)\longrightarrow(r,i+1)\bigr).
    \]

    \item $E_r$ records that all labels $1,\ldots,L$ in the row
    have been inspected. The current state is $(r,L+1)$, at the
    boundary $y=\beta_{km-x}$. This letter makes the transition
    \[
    E_r:=\bigl((r,L+1)\longrightarrow(r-n,1)\bigr).
    \]
\end{enumerate}
The same rules apply to empty rows. We will check below that the
final $E$-letter returns to the initial state, so the word is closed.

Moving the boundary one unit to the right increases its rank by $m$.
The East step after the run in column $x$ ends where the run in
column $x+1$ begins, so
\[
\rho(x+1,\alpha_{km-x-1})
=
\rho(x,\beta_{km-x})-n
\qquad(0\leq x<km-1).
\]
The final $E$-transition also decreases the rank by $n$ and returns
to the first row, since
\[
\rho(km-1,\beta_1)
=mkn-n(km-1)=n,
\qquad
\rho(0,\alpha_{km})=0.
\]
Each row contributes one $U$- or $V$-letter for every
$1\leq i\leq L$, followed by an $E$-letter. Thus a proper word is a
closed walk. Its $U$-letters give the entries of each row in
scan order, so the word recovers the RST.

The \emph{profile} $\mathfrak p$ records the number of occurrences of
each letter. For a fixed profile, write
\[
u_{r,i}:=\#\{U_{r,i}\},\qquad
v_{r,i}:=\#\{V_{r,i}\},\qquad
w_r:=\#\{E_r\},\qquad
g_{r,i}:=u_{r,i}+v_{r,i}
\quad(1\leq i\leq L).
\]
Thus $g_{r,i}$ is the number of letters departing from the state $(r,i)$.
Set $g_{r,L+1}:=w_r$, and interpret counts at negative levels as zero.
A profile is \emph{realizable}
if it is the profile of at least one proper word.

We will also use the converse: every closed walk with a
realizable profile that begins at $(a,1)$ is a proper word.
Between consecutive $E$-letters, the second coordinate runs through
$1,\ldots,L+1$. The $U$-letters in that segment therefore specify a
row with strictly increasing entries. Since the profile is realizable,
the walk has $km$ $E$-letters and $kn$ $U$-letters. Its successive
rows give a $(km,kn)$-path ending with an East step and a valid
labeling.

It remains to check the levels. After $y$ $U$-letters and $x$
$E$-letters, the level is $a+my-nx$. This is the level 
$my-nx-r_*$ of the corresponding boundary when $r_*=-a$.
All levels in the walk are nonnegative: the same transitions occur
in a proper word realizing the profile. That word has a row beginning
at level $0$, by the minimum rank argument above. The new walk also
visits this row start, because it uses the same letters and every
arrival at a state is paired with a departure. A row starting at
level $0$ cannot be empty, since its $E$-transition would arrive at
level $-n$. Thus the reconstructed path has minimum North-step rank
$r_*$. Its boundaries and labels agree with those of its associated
RST, proving the converse. In particular, rotating a word at a row
start, or constructing a closed walk from a realizable profile, still
gives a proper word.

\begin{lemma}\label{lemmabalance}
For $1\leq i\leq L$,
\[
g_{r,i+1}=v_{r,i}+u_{r-m,i}
=g_{r,i}-u_{r,i}+u_{r-m,i}.
\]
\end{lemma}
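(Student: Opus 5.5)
The identity is a flow-conservation statement at the state $(r,i+1)$, read off from a proper word realizing the profile; so I fix such a word and count arrivals at $(r,i+1)$. For $1\le i\le L$, the only transitions that arrive at a state whose second coordinate is $i+1$ are the $U$- and $V$-letters departing from second coordinate $i$; an $E$-letter always arrives at second coordinate $1$, so it plays no role. A $V_{r',i}$ arrives at $(r',i+1)$, and a $U_{r',i}$ arrives at $(r'+m,i+1)$. Hence the letters arriving at $(r,i+1)$ are exactly the occurrences of $V_{r,i}$ and of $U_{r-m,i}$. Their number is $v_{r,i}+u_{r-m,i}$, with $u_{r-m,i}=0$ when $r-m<0$ by the stated convention.

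Next I use that a proper word is a closed walk. In a closed walk, each arrival at a state is followed by a departure from that same state. The last letter's arrival is matched with the first letter's departure, since they share the initial state. This pairing is a bijection between arrivals and departures at every state, so the number of letters arriving at $(r,i+1)$ equals the number departing from it.

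The departures from $(r,i+1)$ number $g_{r,i+1}$. If $i+1\le L$, these are the $U_{r,i+1}$ and $V_{r,i+1}$ letters. If $i+1=L+1$, they are the $E_r$ letters, and the convention $g_{r,L+1}=w_r$ covers this case uniformly. Combining the two counts gives $g_{r,i+1}=v_{r,i}+u_{r-m,i}$. The second equality is then just $v_{r,i}=g_{r,i}-u_{r,i}$, which is the definition of $g_{r,i}$.

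The only point needing care is that these counts depend only on the profile and not on the chosen word. This holds because the profile records the multiplicity of every letter. The identity therefore holds for every realizable profile, which is the setting of the lemma, and the balance argument applies to any closed walk with that profile.
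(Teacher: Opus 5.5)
Your proof is correct and follows the paper's argument. Both identify the arrivals at $(r,i+1)$ as exactly the occurrences of $V_{r,i}$ and $U_{r-m,i}$, and equate their number with the departures $g_{r,i+1}$ because a proper word is a closed walk. You additionally spell out the arrival--departure bijection and the $i=L$ case via $g_{r,L+1}=w_r$, which the paper leaves implicit.
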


\begin{proof}
Every arrival at state $(r,i+1)$ is made either by a $V_{r,i}$ from
level $r$ or by a $U_{r-m,i}$ from level $r-m$. These possibilities
are disjoint, and counting them gives the result.
\end{proof}

\begin{remark}
For $i=L$, Lemma~\ref{lemmabalance} includes
$w_r=v_{r,L}+u_{r-m,L}$. Likewise, the letters arriving at
$(r,1)$ are precisely the $E_{r+n}$ letters. Hence
\begin{equation}\label{eqbal2}
g_{r,1}=w_{r+n}=g_{r+n,L+1}.
\end{equation}
\end{remark}

Let $\nu_r$ be the number of rows whose initial boundary has
level $r$. The transition between rows, including the
transition from the last row to the first, gives
\[
\nu_r:=g_{r,1}=w_{r+n}.
\]
Each of the $km$ rows has one initial state, so
\[
\sum_{r\geq0}\nu_r=km.
\]
By definition, $\nu_0$ counts rows whose initial boundary has level
$0$. Such a row is nonempty: otherwise its $E$-transition would
arrive at level $-n$. Under the RST correspondence, its initial
boundary is the starting point of the first North step in that
column. Level $0$ means that this step has rank $r_*$, so these
rows correspond exactly to the North steps counted by $\touch(\pi)$. Hence
\[
\nu_0=\touch(\pi).
\]

\begin{example}\label{exampleRowWords}
Continue with the path of Example~\ref{examplePathStats}.
Figure~\ref{figRowWords} displays the rows of its RST beside the
corresponding parts of its word.
\begin{center}
\begin{minipage}{\linewidth}
\centering
\begin{tikzpicture}[x=5.5mm,y=5.5mm]
  \draw[blue!55,densely dashed,line width=.4pt]
    (2,-.35)--(2,8.35);
  \node[blue!70,font=\scriptsize,anchor=south]
    at (2,8.43) {$x=0$};
  \def\RBox#1#2#3{%
    \draw ({1+#2},#1) rectangle ({2+#2},{#1+1});%
    \node[font=\scriptsize] at ({1.5+#2},{#1+.5}) {$#3$};}
  \def\VLeft#1#2{%
    \node[font=\tiny,anchor=center,inner sep=0pt] at ({1.5+#2},{#1+.5}) {$-\infty$};}
  \def\VRight#1#2{%
    \node[font=\tiny,anchor=center,inner sep=0pt] at ({1.5+#2},{#1+.5}) {$+\infty$};}
  \VLeft{7}{0}\RBox{7}{1}{2}
  \RBox{7}{2}{6}\VRight{7}{3}
  \VLeft{5.6}{0}\RBox{5.6}{1}{3}
  \VRight{5.6}{2}
  \VLeft{4.2}{0}\VRight{4.2}{1}
  \draw (2,4.2)--(2,5.2);
  \VLeft{2.8}{-1}\RBox{2.8}{0}{1}
  \VRight{2.8}{1}
  \VLeft{1.4}{-2}\RBox{1.4}{-1}{4}
  \RBox{1.4}{0}{5}\RBox{1.4}{1}{7}
  \VRight{1.4}{2}
  \VLeft{0}{0}\RBox{0}{1}{8}
  \VRight{0}{2}
  \node[anchor=center,align=center,font=\scriptsize,inner sep=0pt] at (12,7.5)
    {$\begin{gathered}V_{4,1}\,U_{4,2}\,V_{7,3}\,V_{7,4}\,V_{7,5}\\[-2pt]
     U_{7,6}\,V_{10,7}\,V_{10,8}\,E_{10}\end{gathered}$};
  \node[anchor=center,align=center,font=\scriptsize,inner sep=0pt] at (12,6.1)
    {$\begin{gathered}V_{6,1}\,V_{6,2}\,U_{6,3}\,V_{9,4}\,V_{9,5}\\[-2pt]
     V_{9,6}\,V_{9,7}\,V_{9,8}\,E_9\end{gathered}$};
  \node[anchor=center,align=center,font=\scriptsize,inner sep=0pt] at (12,4.7)
    {$\begin{gathered}V_{5,1}\,V_{5,2}\,V_{5,3}\,V_{5,4}\,V_{5,5}\\[-2pt]
     V_{5,6}\,V_{5,7}\,V_{5,8}\,E_5\end{gathered}$};
  \node[anchor=center,align=center,font=\scriptsize,inner sep=0pt] at (12,3.3)
    {$\begin{gathered}U_{1,1}\,V_{4,2}\,V_{4,3}\,V_{4,4}\,V_{4,5}\\[-2pt]
     V_{4,6}\,V_{4,7}\,V_{4,8}\,E_4\end{gathered}$};
  \node[anchor=center,align=center,font=\scriptsize,inner sep=0pt] at (12,1.9)
    {$\begin{gathered}V_{0,1}\,V_{0,2}\,V_{0,3}\,U_{0,4}\,U_{3,5}\\[-2pt]
     V_{6,6}\,U_{6,7}\,V_{9,8}\,E_9\end{gathered}$};
  \node[anchor=center,align=center,font=\scriptsize,inner sep=0pt] at (12,.5)
    {$\begin{gathered}V_{5,1}\,V_{5,2}\,V_{5,3}\,V_{5,4}\,V_{5,5}\\[-2pt]
     V_{5,6}\,V_{5,7}\,U_{5,8}\,E_8\end{gathered}$};
  \node[font=\small] at (2.5,-.75) {RST $S$};
\node[font=\small] at (12,-.75) {word};
\end{tikzpicture}
\captionof{figure}{An example of an RST and its word}\label{figRowWords}
\end{minipage}
\end{center}
From top to bottom, the rows correspond to the path columns
$x=0,1,\ldots,5$. Each line
ends with the $E$ that moves the scan to the next row.
The top row begins at level $4=\rho(0,0)-\rho_*$. Its entries
$2$ and $6$ give $U_{4,2}$ and $U_{7,6}$: each $U$
crosses a real box and raises the boundary level by $m=3$.
The other labels give $V$'s. The final $E_{10}$
lowers the level by $n=4$, so the next row begins at level
$6$, where its entry $3$ gives $U_{6,3}$. The empty third
row has only $V$'s before $E_5$. The first row's $V_{4,1}$
and the fourth row's $U_{1,1}$ are the two arrivals at
$(4,2)$, so $g_{4,2}=v_{4,1}+u_{1,1}=2$ by
Lemma~\ref{lemmabalance}.
The initial levels are $4,6,5,1,0,5$, so the profile has
\[
(\nu_0,\nu_1,\nu_2,\nu_3,\nu_4,\nu_5,\nu_6)=(1,1,0,0,1,2,1).
\]
In particular, $\nu_5=w_9=2$, $\nu_6=w_{10}=1$, and
$\nu_0=\touch(\pi)=1$.
\end{example}

For a proper word $w$, define
\[
h_\sigma(w)
:=
\#\bigl\{(V_{z,i},U_{r,i}):
V_{z,i}\text{ occurs before }U_{r,i},\
r<z\leq r+m\bigr\}.
\]

\begin{proposition}\label{propshift}
Let $\pi$ be a $(km,kn)$-preference function, $S$ its associated
RST, and $w$ its word. Then
\[
x^{\wt(S)}
=
\prod_{i=1}^{L}x_i^{\sum_r u_{r,i}},
\qquad
\area(\pi)
=
\sum_{r,i}u_{r,i}\left\lfloor\frac{r}{n}\right\rfloor,
\qquad
h_\sigma(w)=h_\sigma(S).
\]
\end{proposition}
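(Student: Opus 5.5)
The three identities are checked separately, using the fact established above that the $U$-letters of a proper word are exactly the real boxes of $S$, read in scan order.

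For the weight, each $U_{r,i}$ is a real box of $S$ containing the entry $i$, and every real box produces exactly one $U$-letter. Hence $\sum_r u_{r,i}$ is the number of entries equal to $i$, which gives the first identity.

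For the area, the $U_{r,i}$ letter departing from level $r$ corresponds to the box whose left boundary is at level $r=\rho(x,y)-\rho_*$. This box comes from the North step beginning at $(x,c_x+y)$. That step has rank $m(c_x+y)-nx=\rho(x,y)$, so its area word entry satisfies $n(a_j+s)=\rho(x,y)-r_*=r$, using $\rho_*=r_*$ and $ns=-r_*$. Summing $\lfloor a_j+s\rfloor=\lfloor r/n\rfloor$ over all $U$-letters gives the area. The one point to confirm is that $-ns=\min_j na_j=r_*$, which holds by definition.

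For $h_\sigma$, I would biject increasing $\sigma$-triples of $S$ with pairs $(V_{z,i},U_{r,i})$ such that the $V$ comes earlier and $r<z\leq r+m$. Let $(a,b,c)$ be an increasing triple, with $b$ a real box in an earlier-scanned row (row index $i<j$ means column further right; I must get the scan order right, so in fact the row of $b$ is scanned \emph{later} if rows are scanned top to bottom with $x$ increasing). I will set this up carefully. Take the box $b$ with entry $S(b)=i$; it is one $U_{r,i}$ letter. The pair $(a,c)$ in the other row is a boundary between adjacent boxes, and the condition $S(a)<i<S(c)$ says precisely that when that row's scan inspects label $i$, the boundary sits between $a$ and $c$ and no box there contains $i$. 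This is a $V_{z,i}$ letter, where $z$ is the level of that boundary. Conversely, each $V_{z,i}$ letter determines a unique boundary, namely the one at which label $i$ is inspected, and each $U_{r,i}$ letter determines a box $b$. The remaining task is to show that the alignment condition of a $\sigma$-triple (right edge of $b$ matching $c$ or $a$ according to $\sigma(i)$ versus $\sigma(j)$) is equivalent to $r<z\leq r+m$ together with the order condition. I would reuse the rank computation from the proof of Proposition~\ref{propdinvh}. There the rank difference is $m(p_v-p_u+\theta)$ with $\sigma(i)>\sigma(j)$ exactly when $\theta\geq 0$. Rewriting this in terms of the left-boundary level of $b$ and the level of the boundary $(a,c)$, the two alignment cases should combine into the single window $0<z-r\leq m$, where strictness at one end comes from the $\varepsilon$ tie-breaking.

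The main obstacle is the orientation. The row of $a,c$ has larger index $j$, hence a smaller $x$, so it is scanned \emph{before} the row of $b$. This matches ``$V$ occurs before $U$''. I must also exclude spurious pairs: those where both letters lie in the same row, and those where the $V$ row comes later. Same-row pairs never qualify, because after a $U_{r,i}$ the levels in that row are at least $r+m$, and before it the $V_{z,i}$ has the same $i$ only if it is that same step. For the cross-row exclusion, the condition $r<z\le r+m$ is exactly the rank window. So I would carefully verify the boundary cases $z-r=m$ and $z=r$, checking that they match the virtual-box conventions, including empty rows, where $a$ and $c$ are adjacent virtual boxes.
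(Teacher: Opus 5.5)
Your proposal is correct and matches the paper's proof. The weight and area identities are read off from the $U$-letters exactly as you describe. For $h_\sigma$, the paper likewise writes $z-r=m(y'-y)+\bigl(n(x-x')-m(c_x-c_{x'})\bigr)$, with the integer in parentheses strictly between $-m$ and $m$. It then uses the $\varepsilon$ tie-break to show that the two alignment cases $y'=y$ and $y'=y+1$ are exactly the window $0<z-r\le m$, and that $V_{z,i}$ marks the adjacent pair bracketing $i$.

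Your concern about pairs whose $V$-row comes later is moot, because $h_\sigma(w)$ only counts $V$ before $U$. Same-row pairs are vacuous simply because each row contains exactly one letter with second index $i$.
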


\begin{proof}
An occurrence of $U_{r,i}$ records a real box containing $i$.
Counting these occurrences by label gives the asserted expression
for $x^{\wt(S)}$.

For $\area(\pi)$, consider the North step corresponding to an
occurrence of $U_{r,i}$. It begins at $(x,c_x+y)$, so its entry
in the area word is $\rho(x,y)/n$. Since $s=-r_*/n$, this step contributes
\[
\left\lfloor\frac{\rho(x,y)}{n}+s\right\rfloor
=
\left\lfloor\frac{\rho(x,y)-r_*}{n}\right\rfloor
=
\left\lfloor\frac{r}{n}\right\rfloor.
\]
Summing over all $U$-letters gives the asserted expression for
$\area(\pi)$.

It remains to compare the two $h_\sigma$ statistics. Consider an
occurrence of $V_{z,i}$ in the earlier row $km-x'$ and an occurrence
of $U_{r,i}$ in the later row $km-x$, where $x'<x$. Let $y'$ and
$y$ be the coordinates of their boundaries. Their levels
satisfy
\[
z-r
=
n(x-x')-m(c_x-c_{x'})+m(y'-y),
\qquad
-m<n(x-x')-m(c_x-c_{x'})<m.
\]
Moreover,
\[
d_{x+1}-d_{x'+1}
=
-\frac{n(x-x')-m(c_x-c_{x'})}{m}
+\varepsilon(x-x').
\]
Since $0<\varepsilon<1/(km^2)$ by Definition~\ref{defalpha}, we have
$0<m\varepsilon(x-x')<1$ for every $0\leq x'<x<km$. If the
integer $n(x-x')-m(c_x-c_{x'})$ is positive, then
$d_{x+1}<d_{x'+1}$. By the definition of $\sigma$, the later real
box must align with the right member of the adjacent pair in the
earlier row. This means $y'=y$. If the integer is nonpositive,
then $d_{x+1}>d_{x'+1}$; the positive term
$\varepsilon(x-x')$ resolves the equality case. The later box
must now align with the left member, which means $y'=y+1$.
Since the integer lies strictly between $-m$ and $m$, the condition
$0<z-r\leq m$ holds exactly in these two cases. Any other integer
value of $y'-y$ puts $z-r$ outside this interval.

The letter $V_{z,i}$ marks the adjacent pair of boxes in the earlier
row that brackets $i$: entries to its left are smaller than $i$ and
entries to its right are larger. This remains true at the ends of a
row by the conventions for virtual entries $-\infty$ and $+\infty$.
Thus a pair $(V_{z,i},U_{r,i})$ counted by $h_\sigma(w)$ determines
an increasing $\sigma$-triple of $S$. Conversely, in every increasing
$\sigma$-triple whose middle box contains $i$, the adjacent pair in
the earlier row brackets $i$. Its scan therefore records the unique
corresponding $V_{z,i}$, while the middle box records $U_{r,i}$.
This proves $h_\sigma(w)=h_\sigma(S)$.
\end{proof}

\begin{example}\label{exampleWordStats}
Continue with the same path $\pi$. Its RST $S$ is shown in
Figure~\ref{figPathTableau}, and its word $w$ in Figure~\ref{figRowWords}.

The $U$-levels in these six rows are
$(4,7),(6),\varnothing,(1),(0,3,6),(5)$.
By Proposition~\ref{propshift}, each $U_{r,i}$ contributes
$\lfloor r/4\rfloor$ to the area. The row contributions are
$2,1,0,0,1,1$, so $\area(\pi)=5$, as in
Example~\ref{example68}. The $U$-letters contain each label
$1,\ldots,8$ once. They therefore give
$x^{\wt(S)}=x_1x_2\cdots x_8=x_\pi$.

The pairs that contribute to $h_\sigma(w)$ are
\[
(V_{7,3},U_{6,3}),\quad
(V_{4,1},U_{1,1}),\quad
(V_{5,5},U_{3,5}),\quad
(V_{4,5},U_{3,5}),\quad
(V_{9,7},U_{6,7}).
\]
Each of them pairs a $V$ in an earlier row with a $U$ in a later row
and corresponds to an increasing $\sigma$-triple in $S$, as
proved in Proposition~\ref{propshift}. Thus
$h_\sigma(w)=5=h_\sigma(S)=\dinv(\pi)$.
\end{example}

Let $\mathfrak W_a(\mathfrak p)$ be the set of proper words with
profile $\mathfrak p$ and initial state $(a,1)$. Set
\[
\phi_a(\mathfrak p;q)
:=
\sum_{w\in\mathfrak W_a(\mathfrak p)}q^{h_\sigma(w)}.
\]

For $a\geq0$, set
\[
P_a:=\sum_{b<a}\nu_b.
\]

\begin{proposition}\label{propqcyc}
Let $\mathfrak p$ be a realizable profile with $\nu_0=\ell$.
For every $a\geq0$,
\[
[\ell]_q\phi_a(\mathfrak p;q)
=
q^{P_a}[\nu_a]_q\phi_0(\mathfrak p;q).
\]
\end{proposition}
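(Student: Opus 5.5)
We compare closed walks with the same profile that begin at different initial states. A proper word in $\mathfrak W_a(\mathfrak p)$ is a closed walk that starts at a row start $(a,1)$. By the converse established before Lemma~\ref{lemmabalance}, every rotation of such a walk to another row start is again a proper word with the same profile. The plan is to understand how $h_\sigma$ changes under rotation. We then sum over all pairs (word, marked row start) in two ways, in the spirit of the cyclic lemma behind Sergel's proof.

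\textbf{Step 1: how $h_\sigma$ changes under a one-row rotation.} Take a proper word $w=w_1w_2$, where $w_1$ is its first row segment (ending with an $E$-letter), starting at level $a$. Compare $h_\sigma(w)$ with $h_\sigma(w_2w_1)$. Pairs $(V_{z,i},U_{r,i})$ lying inside $w_1$ or inside $w_2$ are unchanged. Pairs with $V$ in $w_1$ and $U$ in $w_2$ are lost, and pairs with $V$ in $w_2$ and $U$ in $w_1$ are gained. Summing over $i$, the net change should be computable from the profile alone, using Lemma~\ref{lemmabalance} and \eqref{eqbal2}. For each label $i$, count the $V_{z,i}$ in $w_2$ with $r<z\le r+m$ for each $U_{r,i}$ in $w_1$, and conversely. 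Telescoping the balance relations $g_{r,i+1}=g_{r,i}-u_{r,i}+u_{r-m,i}$ in $i$ should reduce the net change to something like
\[
h_\sigma(w_2w_1)-h_\sigma(w)=\#\{\text{rows of }w\text{ starting at levels }<a'\}-\#\{\text{rows starting at levels }<a\}
\]
plus a correction when two row starts share a level. Here $a'$ is the new initial level. Concretely, I expect the rotation to shift $h_\sigma$ by $P_{a'}-P_a$, up to a term counting rows at the same level that lie earlier in the word.

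\textbf{Step 2: double counting.} Consider the set $\mathcal C$ of pairs $(w,j)$, where $w\in\mathfrak W_0(\mathfrak p)$ and $j$ marks one of the $\nu_a$ row starts at level $a$. Rotating $w$ to start at row $j$ gives an element of $\mathfrak W_a(\mathfrak p)$. Conversely, each $w'\in\mathfrak W_a(\mathfrak p)$, together with a choice of one of its $\nu_0=\ell$ row starts at level $0$, rotates back into $\mathfrak W_0(\mathfrak p)$. Because the rotated words are distinct as words, this correspondence is a bijection between ($\mathfrak W_0$-words with a marked level-$a$ row) and ($\mathfrak W_a$-words with a marked level-$0$ row). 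Weight a pair by $q^{h_\sigma}$ of the rotated word. Step~1 should give that, among the $\nu_a$ level-$a$ rows of a fixed $w$ (ordered by position), the rotations have $h_\sigma$ equal to $h_\sigma(w)+P_a+c$ with $c$ running over $0,1,\dots,\nu_a-1$, producing $q^{P_a}[\nu_a]_q$. Symmetrically, the $\ell$ level-$0$ rotations of $w'$ produce $h_\sigma(w')-P_a+\{0,\dots,\ell-1\}$, giving the factor $[\ell]_q$. Equating the two weighted counts yields $[\ell]_q\phi_a=q^{P_a}[\nu_a]_q\phi_0$.

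\textbf{Main obstacle.} The main obstacle is Step~1. The claim there is that the change in $h_\sigma$ under rotation depends only on the profile and on the relative order of equal-level row starts, producing exactly consecutive exponents $0,\dots,\nu_a-1$. The condition $r<z\le r+m$ with $U_{r,i}$ going up by $m$ and $E$ going down by $n$ must be tracked carefully. I would first try to prove that moving a single row segment from the front to the back changes $h_\sigma$ by $\#\{\text{row starts with level}<a\}$ plus the number of row starts at level exactly $a$ (or $0$) that it passes. I would verify this with the label-by-label telescoping of Lemma~\ref{lemmabalance}, and check it against Example~\ref{exampleRowWords}. If the single-step formula involves both level $a$ and the next level, I would instead compose rotations through intermediate row starts.
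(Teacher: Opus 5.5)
\textbf{There is a genuine gap.} It is in Step~1, and it breaks the approach: the change of $h_\sigma$ under rotation is not determined by levels. Moving a first letter to the end changes $h_\sigma$ by $\sum_{z=r+1}^{r+m}v_{z,i}$ for $U_{r,i}$, by $-\sum_{s=r-m}^{r-1}u_{s,i}$ for $V_{r,i}$, and by $0$ for $E_r$. These increments differ from the telescoping increments $\delta_{\mathfrak p}$ of Lemma~\ref{lemmaDeltaG} by the change in $I$: $-v_{r,i}$ for each moved $U_{r,i}$ and $+u_{r,i}$ for each moved $V_{r,i}$. That change depends on which letters the moved rows contain, not only on their end states.

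The paper's running example already refutes your formula. In the profile of Example~\ref{exampleTreeProfile}, $\mathfrak W_0(\mathfrak p)$ consists of the level-$0$ rotations of the word of Example~\ref{exampleRowWords} and of the word in Figure~\ref{figAlternativeWord}. These have $h_\sigma=3$ and $h_\sigma=2$. Rotating each to its unique level-$1$ row (here $\nu_1=1$ and $P_1=1$) gives $h_\sigma=3$ and $h_\sigma=4$, whereas your Step~1 predicts $4$ and $3$. The two level-$5$ rotations of the first word have $h_\sigma=5$ and $7$, which are not consecutive. All six row-start rotations of that word give $2q^3+2q^5+2q^7$ rather than $q^3[6]_q$. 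So no cyclic lemma holds word by word or orbit by orbit. Proposition~\ref{propqcyc} holds only after summing over the whole fiber; in this example, $q^3+q^4=q^{P_1}(q^2+q^3)$.

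Step~2 is also not set up consistently. If each pair is weighted by $q^{h_\sigma}$ of the rotated word, the $\mathfrak W_a$ side contributes $\ell\,\phi_a(\mathfrak p;q)$, an integer multiple, not $[\ell]_q\phi_a(\mathfrak p;q)$. Your two shift rules also cannot both hold, since rotating back must undo the shift $P_a+c$.

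The missing idea is to control the distribution over all of $\mathfrak W_a(\mathfrak p)$ rather than along one rotation orbit. The paper splits $h_\sigma=J+I$. It proves that $J$ is constant on $\mathfrak W_a(\mathfrak p)$, using the exchange $ACB\mapsto BCA$ and the potential $G$, with $\eta_a-\eta_0=G_{a,1}-G_{0,1}$. It then enumerates $q^{I}$ by a $q$-weighted BEST theorem:
\begin{itemize}
\item queue orders contribute $q$-binomial factors;
\item last departures form trees rooted at $(a+n,L+1)$;
\item the final $E_{a+n}$ supplies $[\nu_a]_q$;
\item a tree recurrence gives tree sums $K_{\mathfrak p}(q)\,q^{\sum_{j\ge1}g_{r+jm,i}}$.
\end{itemize}
The factor $q^{P_a}$ then comes from combining $G_{a,1}$ with $\sum_{j\ge1}g_{a+jm,1}$, not from the order of equal-level rows within a single word.
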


We now deduce Theorem~\ref{thmEnkPr} from
Proposition~\ref{propqcyc}.

\begin{proof}[Proof of Theorem~\ref{thmEnkPr}]
Fix a realizable profile $\mathfrak p$ with $\nu_0=\ell$.
Only finitely many $\nu_a$ are nonzero, so summing
Proposition~\ref{propqcyc} over $a\geq0$ gives
\[
[\ell]_q\sum_{a\geq0}\phi_a(\mathfrak p;q)
=
\phi_0(\mathfrak p;q)
\sum_{a\geq0}q^{P_a}[\nu_a]_q.
\]
Since $P_0=0$ and $P_{a+1}=P_a+\nu_a$, the nonzero terms
\[
q^{P_a}[\nu_a]_q
=
q^{P_a}+q^{P_a+1}+\cdots+q^{P_a+\nu_a-1}
\]
occupy consecutive, disjoint sets of powers of $q$. They cover the
powers from $q^0$ through $q^{km-1}$ because
$\sum_{a\geq0}\nu_a=km$. Hence
\[
\sum_{a\geq0}q^{P_a}[\nu_a]_q=[km]_q.
\]
Consequently,
\begin{equation}\label{eqfixedprofile}
[\ell]_q\sum_{a\geq0}\phi_a(\mathfrak p;q)
=
[km]_q\phi_0(\mathfrak p;q).
\end{equation}

The first row of an RST associated with a path begins at absolute rank
$\rho(0,\alpha_{km})=0$, so its initial level is $a=-r_*$. The
converse construction above shows that, as $a$ varies, the sets
$\mathfrak W_a(\mathfrak p)$ account for exactly the preference
functions with profile $\mathfrak p$. The words in
$\mathfrak W_0(\mathfrak p)$ account for the parking functions.
Indeed, $a=0$ means $r_*=0$. Ranks increase along a vertical run,
so it is enough to check its lower endpoint. The run in column
$km-i$ begins at $(km-i,c_{km-i}+\alpha_i)$, so nonnegative rank means
$m(c_{km-i}+\alpha_i)-n(km-i)\geq0$. Since $c_{km-i}=\lceil n(km-i)/m\rceil$
and $\alpha_i$ is an integer, this is equivalent to $\alpha_i\geq0$. The
condition $\alpha_i\geq0$ for every $i$ says that the path stays weakly
above the main diagonal.

Proposition~\ref{propshift} shows that the area and label monomial
are constant on a fixed profile. It also gives
$h_\sigma(w)=h_\sigma(S)$, which equals $\dinv(\pi)$ by
Proposition~\ref{propdinvh}. Multiplying~\eqref{eqfixedprofile} by
the common factor $t^{\area(\pi)}x_\pi$ therefore gives
\eqref{eqmaincomb} for this profile. Summing over all realizable
profiles with $\nu_0=\ell$ proves~\eqref{eqmaincomb}.
Theorem~\ref{thmEnkPF} identifies its sum over parking functions with
$E_{k,\ell}[-MX^{m,n}]\cdot1$, giving~\eqref{eqEnkPr}. Finally,
$L$ was arbitrary, so the identity holds in the full alphabet $X$.
\end{proof}

\subsection{Proof of Proposition~\ref{propqcyc}}
Set
\[
I(w):=
\#\left\{
\bigl(U_{r,i},V_{r,i}\bigr)\text{ as a pair of occurrences in }w:
U_{r,i}\text{ occurs before }V_{r,i}
\right\}.
\]
Both letters in such a pair depart from the state $(r,i)$.

To compare $h_\sigma$ on words with the same profile, we first compute
how the statistic $J$ below changes when the first letter of a word is
moved to the end. This also controls the exchange $ACB\mapsto BCA$ in
Lemma~\ref{lemmaExchange}.

Set $J(w):=h_\sigma(w)-I(w)$. For ordered pairs of letter
occurrences, define
\[
\kappa(V_{z,i},U_{r,i})
:=\mathbf{1}_{r<z\leq r+m},
\qquad
\kappa(U_{r,i},V_{z,i})
:=-\mathbf{1}_{z=r},
\]
and set $\kappa(e,f):=0$ for all other pairs. The first
value counts the pairs in $h_\sigma$, while the second
subtracts the pairs in $I$. Thus, if
$w=e_1e_2\cdots e_N$, then
\[
J(w)=\sum_{1\leq j<j'\leq N}\kappa(e_j,e_{j'}).
\]
For a profile $\mathfrak p$, set
\[
\delta_{\mathfrak{p}}(e)
:=\sum_{f\text{ in }\mathfrak{p}}
\bigl(\kappa(f,e)-\kappa(e,f)\bigr).
\]
When $e_1e_2\cdots e_N$ has profile $\mathfrak p$, this is
$J(e_2\cdots e_Ne_1)-J(e_1e_2\cdots e_N)$: moving the first
letter to the end reverses its order with every other letter.
Also set
\[
G_{r,i}
:=
\sum_{s<r}g_{s,i}
-
\sum_{j\geq1}g_{r+jm,i}.
\]
\begin{lemma}\label{lemmaDeltaG}
Suppose a profile $\mathfrak p$ has equally many arrivals and
departures at every state. Then
\begin{align}
\delta_{\mathfrak p}(U_{r,i})&=\sum_{z=r}^{r+m}v_{z,i},\label{eqDeltaU}\\
\delta_{\mathfrak p}(V_{r,i})&=-\sum_{z=r-m}^{r}u_{z,i},\label{eqDeltaV}\\
\delta_{\mathfrak p}(E_r)&=0.\label{eqDeltaE}
\end{align}
For every letter $e$, its change is the difference between the
values of $G$ at its arrival and departure states:
\[
\delta_{\mathfrak p}(e)
=G_{\text{arrival state of }e}
-G_{\text{departure state of }e}.
\]
\end{lemma}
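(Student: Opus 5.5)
The plan is to compute $\delta_{\mathfrak p}$ directly from the definition of $\kappa$ for each of the three letter types, and only then use the balance hypothesis, through Lemma~\ref{lemmabalance} and \eqref{eqbal2}, to recognise the result as a difference of $G$-values. Formulas \eqref{eqDeltaU}--\eqref{eqDeltaE} need no hypothesis on $\mathfrak p$. The telescoping identity with $G$ is where the balance assumption is used.

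\emph{The three formulas.} For $e=U_{r,i}$, the only $f$ with $\kappa(f,e)\neq0$ are $f=V_{z,i}$ with $r<z\leq r+m$. These contribute $\sum_{z=r+1}^{r+m}v_{z,i}$. The only $f$ with $\kappa(e,f)\neq0$ are $f=V_{r,i}$, each with value $-1$, so subtracting them adds $v_{r,i}$. This gives \eqref{eqDeltaU}. For $e=V_{r,i}$, the terms $\kappa(f,e)$ come from $f=U_{r,i}$ and total $-u_{r,i}$. The terms $\kappa(e,f)$ come from $f=U_{z,i}$ with $z<r\leq z+m$, that is $r-m\leq z<r$. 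Subtracting them gives \eqref{eqDeltaV}. Since $\kappa$ vanishes whenever an $E$-letter is involved, \eqref{eqDeltaE} holds. Note also that $\kappa(e,e)=0$, so the sum over $f$ in $\mathfrak p$ may include $e$ itself harmlessly.

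\emph{The $G$-identity.} The arrival counts at $(R,i+1)$ are $g_{R,i+1}=v_{R,i}+u_{R-m,i}$ by Lemma~\ref{lemmabalance}, which holds under the balance hypothesis. Substituting this into the definition of $G$ gives
\[
G_{R,i+1}=\sum_{s<R}v_{s,i}+\sum_{s<R-m}u_{s,i}-\sum_{j\geq1}v_{R+jm,i}-\sum_{j\geq0}u_{R+jm,i},
\]
to be compared with $G_{r,i}=\sum_{s<r}(u_{s,i}+v_{s,i})-\sum_{j\geq1}(u_{r+jm,i}+v_{r+jm,i})$.

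For $V_{r,i}$, take $R=r$. The $v$-terms cancel, and the $u$-terms leave $-\sum_{r-m\leq s<r}u_{s,i}-u_{r,i}$, which is \eqref{eqDeltaV}.

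For $U_{r,i}$, take $R=r+m$. The $u$-terms cancel after reindexing $j\mapsto j+1$. The $v$-terms leave $\sum_{s=r}^{r+m-1}v_{s,i}+v_{r+m,i}$, which is \eqref{eqDeltaU}.

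For $E_r$, from $(r,L+1)$ to $(r-n,1)$, use \eqref{eqbal2}, $g_{s,1}=g_{s+n,L+1}$. Shifting indices by $n$ turns $G_{r-n,1}$ into $G_{r,L+1}$, so the difference is $0$, matching \eqref{eqDeltaE}.

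The main obstacle is the bookkeeping at the boundary in the $E$ case. The shifted sum for $G_{r-n,1}$ runs over $s<r-n$, including negative $s$ where $g_{s,1}=0$ by convention, while the corresponding terms of $G_{r,L+1}$ are $w_{s+n}$ with $0\leq s+n<n$. So I must check that $w_{r'}=0$ for $r'<n$. This follows from the balance hypothesis: $E_{r'}$ arrives at the negative level $r'-n$, where there are no departures, hence no arrivals. A similar check handles negative indices in the $U$ and $V$ computations, where all counts at negative levels are zero.
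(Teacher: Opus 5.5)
Your proposal is correct and follows the paper's proof. You compute $\delta_{\mathfrak p}$ for each letter type directly from $\kappa$, which needs no hypothesis. You then use Lemma~\ref{lemmabalance} and \eqref{eqbal2}, valid under the balance hypothesis, to identify each value as the corresponding difference of $G$-values, with the same cancellations and telescoping as the paper. Your extra check that $w_{r'}=0$ for $r'<n$ makes explicit a boundary point that the paper's index shift in the $E$ case uses without comment.
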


\begin{proof}
We compute this change for each letter type. When the first
letter is $U_{r,i}$, moving it past a $V_{r,i}$ removes
a contribution of $-1$, and moving it past a $V_{z,i}$
with $r<z\leq r+m$ creates a contribution of $1$.
No other pair changes $J$, giving \eqref{eqDeltaU}.

When the first letter is $V_{r,i}$, moving it past a
$U_{z,i}$ with $r-m\leq z<r$ removes a contribution
of $1$. At $z=r$, the reversed pair instead creates a
contribution of $-1$. Each such pair therefore changes
$J$ by $-1$; all other pairs do not affect it. This gives
\eqref{eqDeltaV}.
Pairs containing an $E$-letter contribute zero, so
\eqref{eqDeltaE} follows.

We now prove the last equality in Lemma~\ref{lemmaDeltaG}. The balance equations
in Lemma~\ref{lemmabalance} and \eqref{eqbal2} hold for the
profile in the statement, since each state has equally many
arrivals and departures.

A letter $U_{r,i}$ goes from $(r,i)$ to
$(r+m,i+1)$. By Lemma~\ref{lemmabalance},
\begin{align*}
G_{r+m,i+1}-G_{r,i}
&=
\left(
\sum_{s<r+m}g_{s,i+1}
-\sum_{s<r}g_{s,i}
\right)
+\sum_{j\geq1}
\bigl(g_{r+jm,i}-g_{r+m+jm,i+1}\bigr)\\
&=
\sum_{s<r+m}(v_{s,i}+u_{s-m,i})
-\sum_{s<r}(v_{s,i}+u_{s,i})+\sum_{j\geq1}
\bigl(v_{r+jm,i}-v_{r+(j+1)m,i}\bigr)\\
&=
\sum_{z=r}^{r+m-1}v_{z,i}
+\sum_{j\geq1}
\bigl(v_{r+jm,i}-v_{r+(j+1)m,i}\bigr)\\
&=
\sum_{z=r}^{r+m-1}v_{z,i}+v_{r+m,i}
=
\sum_{z=r}^{r+m}v_{z,i}
=\delta_{\mathfrak p}(U_{r,i}).
\end{align*}
Here the $u$-terms cancel after shifting their indices, and
the remaining tail of $v$-terms telescopes to $v_{r+m,i}$.

A letter $V_{r,i}$ goes from $(r,i)$ to $(r,i+1)$. 
\begin{align*}
G_{r,i+1}-G_{r,i}
&=
\sum_{s<r}\bigl(g_{s,i+1}-g_{s,i}\bigr)
-\sum_{j\geq1}
\bigl(g_{r+jm,i+1}-g_{r+jm,i}\bigr)\\
&=
\sum_{s<r}(u_{s-m,i}-u_{s,i})
-\sum_{j\geq1}
\bigl(u_{r+(j-1)m,i}-u_{r+jm,i}\bigr)
&&\text{(By Lemma~\ref{lemmabalance})}\\
&=
-\sum_{z=r-m}^{r-1}u_{z,i}-u_{r,i} =
-\sum_{z=r-m}^{r}u_{z,i} =
\delta_{\mathfrak p}(V_{r,i}).
\end{align*}
The two tails of $u$-terms cancel except at levels
$r-m,\ldots,r$.

Finally, $E_r$ goes from $(r,L+1)$ to $(r-n,1)$.
\begin{align*}
G_{r-n,1}-G_{r,L+1}
&=
\sum_{s<r-n}g_{s,1}
-\sum_{j\geq1}g_{r-n+jm,1}
-G_{r,L+1}\\
&=
\sum_{s<r}g_{s,L+1}
-\sum_{j\geq1}g_{r+jm,L+1}
-G_{r,L+1}
&&\text{(By \eqref{eqbal2})}\\
&=0 =\delta_{\mathfrak p}(E_r).
\end{align*}

These three calculations prove the last equality.
\end{proof}

Since these differences of $G$ telescope along a walk, they sum to zero
around a closed walk. This gives the following exchange lemma.

\begin{lemma}\label{lemmaExchange}
Suppose $A$ and $B$ are walks from a state to a
common terminal state, and $C$ is a walk from that terminal state
back to their initial state. In any proper word containing $ACB$ as consecutive
factors, replacing $ACB$ by $BCA$ gives a proper word with
the same profile, initial state, and value of $J$.
\end{lemma}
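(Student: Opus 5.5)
Write the proper word as $w=XACBY$, with $X$ and $Y$ possibly empty, and let $w'=XBCAY$. The plan is to treat the walk and profile claims first and then compute $J(w')-J(w)$ with the cyclic-shift function $\delta_{\mathfrak p}$ of Lemma~\ref{lemmaDeltaG}.

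First, $w'$ is a walk. Let $A$ and $B$ go from $s$ to $s'$, and let $C$ go from $s'$ back to $s$. Then $ACB$ goes from $s$ to $s'$, and so does $BCA$, since $B$ goes from $s$ to $s'$, $C$ from $s'$ to $s$, and $A$ from $s$ to $s'$. So $BCA$ connects to $X$ and $Y$ exactly as $ACB$ did. The initial state is unchanged: if $X$ is nonempty it is clear, and if $X$ is empty, $A$ and $B$ depart from the same state. The multiset of letters is unchanged, so $w'$ has the same profile and is closed. By the converse established above (a closed walk with a realizable profile beginning at $(a,1)$ is proper), $w'$ is proper.

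For $J$, write $J(w)=\sum_{j<j'}\kappa(e_j,e_{j'})$. Only pairs whose relative order changes contribute to $J(w')-J(w)$: a letter of $A$ paired with a letter of $B$ or of $C$, and a letter of $C$ paired with a letter of $B$. Pairs within a single factor, or involving $X$ or $Y$, keep their order. For factors $P$ and $Q$, set $K(P,Q)=\sum_{e\in P,f\in Q}\kappa(e,f)$. Then
\[
J(w')-J(w)=\bigl[K(B,C)+K(B,A)+K(C,A)\bigr]-\bigl[K(A,C)+K(A,B)+K(C,B)\bigr].
\]
For a subword $P$, set $D(P):=\sum_{e\in P}\sum_{f\in\mathfrak p}(\kappa(f,e)-\kappa(e,f))=\sum_{e\in P}\delta_{\mathfrak p}(e)$. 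The profile of a closed walk balances arrivals and departures, so Lemma~\ref{lemmaDeltaG} applies. Since $A$ is a walk from $s$ to $s'$, its differences telescope to $D(A)=G_{s'}-G_s$. Likewise $D(B)=G_{s'}-G_s$, and $D(C)=G_s-G_{s'}$.

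The remaining task is to express $J(w')-J(w)$ through the $D$'s. I would use the cyclic structure, in which the rest of the word $R=YX$ (read cyclically) is a walk from $s'$ to $s$, like $C$. The quantity $D(P)$ equals the sum of $K(f,P)-K(P,f)$ over all letters $f$ of the word, and this sum splits over the factors $A,B,C,R$ together with internal terms of $P$ that cancel. One can expect a cleaner route. Move the first letter to the end repeatedly to rotate $w$ to $ACBR$, and rotate $w'$ to $BCAR$. By the definition of $\delta_{\mathfrak p}$, a single rotation changes $J$ by $\delta$ of the moved letter, so rotating by a whole factor $P$ changes $J$ by $D(P)$. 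Rotating $X$ to the end changes both $J(w)$ and $J(w')$ by $D(X)$, since $X$ is the same prefix of both words. So it suffices to compare $J(ACBY')$ with $J(BCAY')$, where $Y'=YX$.

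Starting from $ACBY'$, rotate $A$ to get $CBY'A$, which adds $D(A)$. Then rotate $CBY'$ to get $ACBY'\to\cdots$; instead, it is more efficient to compare via $BCA$. Rotating $BCAY'$ by $B$ gives $CAY'B$, adding $D(B)=D(A)$. The goal then reduces to an identity $J(CBY'A)-J(CAY'B)=0$, which is of the same type as the original exchange problem. Because of this circularity, I expect the main obstacle to be the bookkeeping.

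I would try instead to prove the pairwise identity directly:
\[
K(B,C)+K(C,A)-K(A,C)-K(C,B)+K(B,A)-K(A,B)=0.
\]
To do this, write $K(P,Q)-K(Q,P)=:\Delta(P,Q)$ and use $D(P)=\sum_Q\Delta(Q,P)$, where $Q$ ranges over the factors $A,B,C,R$ (the $P$-with-$P$ term vanishes by antisymmetry). The needed combination is $\Delta(B,A)+\Delta(C,A)+\Delta(B,C)$. The inputs are $D(A)=D(B)=-D(C)$, together with the equalities $D(R)=D(C)$ and $D(A)+D(B)+D(C)+D(R)=0$. These yield three independent linear relations on the six $\Delta$'s, which may fall short by one. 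I would apply them also to the auxiliary closed walks $AC$, $BC$, and $CR$ viewed as words with their own balanced profiles. This works because $\delta$ computed against a balanced subprofile also telescopes by Lemma~\ref{lemmaDeltaG}, with the $G$ built from that subprofile. It gives $\Delta(A,C)=\Delta(B,C)$-type relations that close the system.
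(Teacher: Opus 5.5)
Your handling of the walk, profile, initial state and properness is correct and matches the paper. Your reduction $J(w')-J(w)=\Delta(B,C)+\Delta(B,A)+\Delta(C,A)$ is also right. The gap is the final step: you assert it rather than prove it, and the details you give do not work.

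\begin{itemize}
\item The full-profile relations are not merely ``one short''. A combination $\sum_P x_P D(P)$ gives $\Delta(P,Q)$ the coefficient $x_Q-x_P$. Making the three $R$-terms vanish forces $x_A=x_B=x_C=x_R$, and then nothing survives. So the target is not in their span, and the $X$, $Y$, $R$ and rotation bookkeeping is a dead end.
\item $CR$ is not a walk, since $C$ and $R$ both run from $s'$ to $s$.
\item A relation $\Delta(A,C)=\Delta(B,C)$ is false in general. Take $m=n=1$, $L=2$, $A=V_{5,1}$, $B=V_{5,1}V_{5,2}E_5U_{4,1}$ and $C=U_{5,2}E_6$. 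Both $A$ and $B$ run from $(5,1)$ to $(5,2)$, and $C$ returns. Then $\Delta(A,C)=0$, but $\Delta(B,C)=1$ because of the pair $\kappa(U_{5,2},V_{5,2})=-1$.
\end{itemize}

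What closes the argument is one balanced subprofile used against two walks. Build $G$ from the profile of the closed walk $BC$, which is balanced. Lemma~\ref{lemmaDeltaG} holds for every letter, whether or not it occurs in that profile. So sum $\delta_{\mathfrak p_{BC}}$ over the letters of $A$ and, separately, over the letters of $C$, and telescope:
\[
\Delta(B,A)+\Delta(C,A)=G_{s'}-G_{s},
\qquad
\Delta(B,C)=\Delta(B,C)+\Delta(C,C)=G_{s}-G_{s'}.
\]
Adding these gives exactly $\Delta(B,C)+\Delta(B,A)+\Delta(C,A)=0$. (In the example above, both sides are $-1$ and $1$.)

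This is precisely the paper's proof. There it is written as $J(ACB)-J(BCA)=-\sum_{e\in AC}\delta_{\mathfrak p_{BC}}(e)$, which vanishes because the $G$-differences telescope around the closed walk $AC$. Only $A$, $B$, $C$ enter, and no rotation or reference to the rest of the word is needed.
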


\begin{proof}
Both $AC$ and $BC$ are closed walks, so the replacement gives a
closed walk with the same profile and initial state; it is a proper word
by the converse construction in Section~\ref{subsec:words}. Pairs
involving a letter
outside the replacement contribute equally to $J$ in both
words. For the pairs inside the replacement,
\[
\begin{aligned}
J(ACB)-J(BCA)
&=
\sum_{\substack{e\in AC\\ f\in BC}}
\bigl(\kappa(e,f)-\kappa(f,e)\bigr)\\
&=
\sum_{e\in AC}
\bigl(G_{\text{departure state of }e}
      -G_{\text{arrival state of }e}\bigr)
=0.
\end{aligned}
\]
The first equality groups the pairs whose order changes; terms
with both letters in $C$ cancel in pairs. For each letter
$e$ of $AC$, the inner sum over $BC$ is the negative
of $\delta_{\mathfrak p}(e)$ for the profile $\mathfrak p$ of $BC$. This profile
is balanced because $BC$ is closed, so Lemma~\ref{lemmaDeltaG}
gives the second equality, with $G$ computed from the counts
of $BC$. Its state differences sum to zero because
$AC$ is closed. The argument also covers an empty $C$.
\end{proof}

\begin{proposition}\label{propstep1}
Fix a realizable profile $\mathfrak{p}$. For each $a$ with
$\mathfrak{W}_a(\mathfrak{p})\neq\varnothing$, there is a quantity
$\eta_a$ such that
\[
h_\sigma(w)-I(w)=\eta_a
\qquad
\text{for every }w\in\mathfrak{W}_a(\mathfrak{p}).
\]
In particular, this difference depends on the fixed profile and the
starting level $a$, but not on the order of the letters in $w$.
\end{proposition}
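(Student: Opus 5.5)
We want to show that $J(w)=h_\sigma(w)-I(w)$ is constant on $\mathfrak W_a(\mathfrak p)$. The natural route is a connectivity argument. Any two words in $\mathfrak W_a(\mathfrak p)$ are closed walks with the same letters and the same initial state $(a,1)$. We will show they are joined by a chain of exchanges $ACB\mapsto BCA$ of the type in Lemma~\ref{lemmaExchange}, each of which preserves $J$. This is the standard BEST-type fact: closed Eulerian trails with a fixed multiset of edges and a fixed starting vertex are connected under transpositions of two subwalks that leave from a common vertex and return to a common vertex.

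Concretely, fix $w,w'\in\mathfrak W_a(\mathfrak p)$ and let $w$ and $w'$ agree on a longest common prefix $P$, ending at some state $s$. Write $w=PeX$ and $w'=Pe'X'$ with $e\neq e'$, both departing from $s$. Since $w$ contains every letter of $\mathfrak p$, the letter $e'$ occurs somewhere in $X$, say $X=Ye'Z$, and $Y$ ends at $s$. Then $w=P\,(eY)\,(e'Z)$.

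We want to bring $e'$ forward by a move $ACB\mapsto BCA$. Take $A=eY$, a walk from $s$ to $s$, and split $e'Z$ at a later return of $w$ to $s$ if one exists, so that $B$ is a walk from $s$ to $s$ starting with $e'$. Then $ACB\mapsto BCA$ with empty $C$ applies, since $A$ and $B$ share their initial and terminal state $s$. The result is a proper word by the converse construction, it agrees with $w'$ on the longer prefix $Pe'$, and Lemma~\ref{lemmaExchange} gives equal $J$. If $w$ never returns to $s$ after $e'$, then $e'Z$ is the final segment ending at $(a,1)$. In that case we instead use the closed-walk structure and rotate within the cycle. Let $A=eY$, let $B$ be the portion of $e'Z$ up to its last visit to $s$ if any, and absorb the remainder into $C$. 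Alternatively we choose $C$ so that $A$ and $B$ end at the common terminal state $(a,1)$ and $C$ is empty. The Lemma is phrased to permit this by allowing a nonempty $C$ returning to the common initial state.

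Inducting on the length of the common prefix then transforms $w$ into $w'$ through proper words in $\mathfrak W_a(\mathfrak p)$ with constant $J$. This gives $\eta_a:=J(w)$ for any $w\in\mathfrak W_a(\mathfrak p)$.

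The main obstacle is the case analysis in the exchange step: producing, in every configuration, factors $A,B,C$ exactly matching the hypotheses of Lemma~\ref{lemmaExchange} while keeping the initial state $(a,1)$ fixed. First try to show that $e'$ lies on a subwalk of $X$ returning to $s$. The case where it does not is handled as follows. Because the portion of $w'$ after $P$ is a closed walk from $s$ to $(a,1)$ using the remaining letters, a standard Euler-tour argument shows that the remaining letters of $w$ after $P$ can always be regrouped as $ACB$ with $A$ starting with $e$ and $B$ starting with $e'$, both from $s$, ending at a common state, and $C$ returning to $s$. Here we use that arrivals equal departures at each state.
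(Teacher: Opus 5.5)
\textbf{There is a gap, in exactly the case you flag as the main obstacle.} Your overall strategy matches the paper's: induct on the length of the common prefix, and extend it by one exchange $ACB\mapsto BCA$ from Lemma~\ref{lemmaExchange}. Your first case, where $w$ returns to $s$ after $e'$, is correct: take $A=eY$, $C$ empty, and $B$ from $e'$ to the next return to $s$.

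The other case is the nontrivial one, and none of the choices you offer works there. The exchange acts on \emph{consecutive factors} of the current word $w$, not on a regrouping of its letters. Since $A$ and $B$ must depart from a common state, an empty $C$ forces $A$ and $B$ to be loops at $s$. So:
\begin{itemize}
\item With $A=eY$ and $C$ empty, you need $w$ to return to $s$ after $e'$, which is exactly what fails here.
\item $A=eY$ ends at $s$, not at $(a,1)$, so you cannot make $A$ and $B$ end at $(a,1)$ with $C$ empty.
\item ``Absorbing the remainder into $C$'' would put $C$ after $B$, which is the wrong order.
\item Rotating the whole word changes the initial state, and by Remark~\ref{remark1} this changes $J$ in general.
\end{itemize}
Your last paragraph only asserts that a suitable $ACB$ exists ``by a standard Euler-tour argument.''

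\textbf{The missing construction.} What works is to rotate only the loop $eY$ and splice it in at a later meeting state. Let $t$ be the first state reached by $w$ after the letter $e'$ that is visited by the closed segment $eY$. Split $eY=AC$ at a visit to $t$, so that $A$ starts with $e$ and runs from $s$ to $t$, and $C$ returns from $t$ to $s$. Let $B$ be the part of $w$ from $e'$ up to its arrival at $t$. Then $w=P\,A\,C\,B\cdots$ satisfies the hypotheses of Lemma~\ref{lemmaExchange}, and $BCA$ begins with $e'$, so the common prefix grows.

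\textbf{Why $t$ exists.} This is the point that needs proof, and it does not follow from the balance of arrivals and departures. A word that leaves $s$ by $e'$ and never meets the loop $eY$ again is perfectly compatible with a balanced profile. What rules it out is the word $w'$. Suppose no such $t$ existed. Then every letter of $w$ after $e'$ would depart from and arrive at states not visited by $eY$, while every letter of $eY$ departs from a state visited by $eY$. After $Pe'$, the word $w'$ stands at the arrival state of $e'$, which lies outside $eY$. Its remaining letters are those of $eY$ together with those of $w$ after $e'$. It could therefore only use the latter, never reach a state of $eY$, and never use the nonempty segment $eY$, which is a contradiction.
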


\begin{proof}
Take two proper words with the stated profile and initial state.
We use Lemma~\ref{lemmaExchange} to increase their common prefix
until the words agree.

Remove their longest common prefix. Let
$e$ be the next letter of the second word. Since the
profiles agree, an occurrence of $e$ appears later in
the first word. The letters preceding that occurrence,
after the common prefix, form a closed segment at the
current state: the segment ends where $e$ departs.

Follow the first word after $e$ until it first reaches a
state visited by this closed segment. Such a meeting must
occur. Otherwise every letter after $e$ in the first word departs from
a state that the segment does not visit and arrives at such a state.
After the common prefix and $e$, the second word still has to use the
letters of the segment and the letters after $e$ in the first word. It
stands at the arrival state of $e$, which the segment does not visit,
so it can only use letters that come after $e$ in the first word, and
these never lead back to a state of the segment. Thus the letters of the segment, which is not
empty, would never be used. Split the segment as $AC$ at that meeting
state, and let $B$ run from $e$ to the meeting.
Then $A$ and $B$ have the same departure and arrival
states, and $C$ returns to their common departure state.
By Lemma~\ref{lemmaExchange}, the exchange
$ACB\mapsto BCA$ preserves the profile and $J$, while
extending the common prefix.
Repeating the exchange transforms the first word into
the second.
Thus $J$ is constant on
$\mathfrak W_a(\mathfrak p)$, and we set
$\eta_a:=J(w)$ for any $w\in\mathfrak W_a(\mathfrak p)$.
\end{proof}

\begin{remark}\label{remark1}
Suppose $\nu_a>0$. Then a word in
$\mathfrak W_0(\mathfrak p)$ visits $(a,1)$, and rotating it to start
at that state gives a word in $\mathfrak W_a(\mathfrak p)$, by the converse
construction in Section~\ref{subsec:words}. Perform the rotation one letter
at a time, moving the first letter to the end at each step. By
Lemma~\ref{lemmaDeltaG}, each move changes $J$ by the difference between
$G$ at the arrival and departure states of the moved letter. The moved
letters form a walk from $(0,1)$ to $(a,1)$, so these differences
telescope, giving
\[
\eta_a-\eta_0=G_{a,1}-G_{0,1}.
\]
\end{remark}

Proposition~\ref{propstep1} fixes $J=h_\sigma-I$ at each initial
state. To count $h_\sigma$, it remains to enumerate $I$, beginning
with the order of departures at individual states.

At a state $(r,i)$, $1\leq i\leq L$, there are $u_{r,i}$
letters $U_{r,i}$ and $v_{r,i}$ letters
$V_{r,i}$. The order in which these letters occur in a proper word is
the \emph{queue} at that state. Set
\[
I_{r,i}(w):=
\#\left\{
U_{r,i}\text{ occurring before }V_{r,i}
\text{ in this queue}
\right\}.
\]
Every pair counted by $I(w)$ occurs in one queue, so
\[
I(w)=\sum_{r,i}I_{r,i}(w).
\]

If the last departure is $U_{r,i}$, removing it does not
change $I_{r,i}$. The remaining queue has $u_{r,i}-1$
letters $U_{r,i}$ and $v_{r,i}$ letters $V_{r,i}$, so
\[
\sum_{\text{last letter }U_{r,i}}q^{I_{r,i}}
=
\begin{bmatrix}
u_{r,i}+v_{r,i}-1\\u_{r,i}-1
\end{bmatrix}_q
=
\frac{[g_{r,i}-1]_q!}
{[u_{r,i}]_q![v_{r,i}]_q!}\,[u_{r,i}]_q.
\]
If the last departure is $V_{r,i}$, all $u_{r,i}$
letters $U_{r,i}$ precede it. Removing it decreases
$I_{r,i}$ by $u_{r,i}$, and the remaining queue has
$u_{r,i}$ letters $U_{r,i}$ and $v_{r,i}-1$ letters
$V_{r,i}$. Hence
\[
\sum_{\text{last letter }V_{r,i}}q^{I_{r,i}}
=
q^{u_{r,i}}
\begin{bmatrix}
u_{r,i}+v_{r,i}-1\\u_{r,i}
\end{bmatrix}_q
=
\frac{[g_{r,i}-1]_q!}
{[u_{r,i}]_q![v_{r,i}]_q!}\,
q^{u_{r,i}}[v_{r,i}]_q.
\]
For $g_{r,i}>0$, write the common factor as
\[
\gamma_{r,i}(q):=
\frac{[g_{r,i}-1]_q!}
{[u_{r,i}]_q![v_{r,i}]_q!}.
\]
The remaining factors, $[u_{r,i}]_q$ and
$q^{u_{r,i}}[v_{r,i}]_q$, depend only on the type of
the last departure.

At $(r,L+1)$, all $w_r$ departures have type $E_r$,
so there is one queue of weight $1$. For $w_r>0$,
we factor its weight in the same way as the other queues:
\[
1=\frac{1}{[w_r]_q}\,[w_r]_q.
\]
The factor $1/[w_r]_q$ depends only on the profile,
while $[w_r]_q$ becomes the weight of its last departure.
Set
\[
\Gamma_{\mathfrak p}(q):=
\prod_{\substack{r,\ 1\leq i\leq L\\g_{r,i}>0}}
\gamma_{r,i}(q),
\qquad
D_{\mathfrak p}(q):=
\prod_{\substack{r\\w_r>0}}[w_r]_q.
\]
These products depend only on the profile. Their quotient
collects the factors from the queues that do not depend
on the type of the last departure.

The last departures cannot be chosen independently.
Following them from a visited state must eventually reach
the departure state of the final letter. Fix a state $z$.
At each visited state
$x\neq z$, select one departing letter type so that
following the selected transitions from any such state
eventually reaches $z$. These transitions form a spanning tree of the
visited states in which every edge points toward $z$; we call it a
\emph{tree rooted at} $z$. Such trees appear in the BEST theorem of
van Aardenne-Ehrenfest and de Bruijn~\cite{AEdB51} and Tutte and
Smith~\cite{TS41} (see also Stanley~\cite[Section~5.6]{EC2}), and
Lemma~\ref{lemmaQueueTree} below is a $q$-weighted form of that theorem.
Assign the weights
\[
\operatorname{wt}(U_{r,i}):=[u_{r,i}]_q,\qquad
\operatorname{wt}(V_{r,i}):=
q^{u_{r,i}}[v_{r,i}]_q,\qquad
\operatorname{wt}(E_r):=[w_r]_q,
\]
and define
\[
T_z(q):=
\sum_{\substack{\text{trees}\\\text{rooted at }z}}
\ \prod_{\text{selected letters }e}\operatorname{wt}(e).
\]

\begin{example}\label{exampleTreeProfile}
Continue with the profile $\mathfrak p$ of
Example~\ref{exampleRowWords} and fix the initial state $(4,1)$.
The only states with both $U$- and $V$-departures are
$(4,2)$ and $(5,8)$. In the word of
Example~\ref{exampleRowWords}, their last departures are
$V_{4,2}$ and $U_{5,8}$, respectively. Thus their queues
are $U_{4,2}V_{4,2}$ and $V_{5,8}U_{5,8}$.

Keep $U_{5,8}$ as the last departure at $(5,8)$, but choose
$U_{4,2}$ as the last departure at $(4,2)$. Its queue is now
$V_{4,2}U_{4,2}$. Reading the queues from $(4,1)$ gives the
word on the left of Figure~\ref{figAlternativeWord}. The first
line of this word corresponds to the top row of the RST in the middle
and to the first path column on the right. The remaining lines
correspond in the same order.

\begin{center}
\begin{minipage}{\linewidth}
\centering
\begin{tikzpicture}[x=5.5mm,y=5.5mm]
  \node[anchor=center,align=center,font=\scriptsize,inner sep=0pt] at (3.5,7.5)
    {$\begin{gathered}V_{4,1}\,V_{4,2}\,V_{4,3}\,V_{4,4}\,V_{4,5}\\[-2pt]
     V_{4,6}\,V_{4,7}\,V_{4,8}\,E_4\end{gathered}$};
  \node[anchor=center,align=center,font=\scriptsize,inner sep=0pt] at (3.5,6.1)
    {$\begin{gathered}V_{0,1}\,V_{0,2}\,V_{0,3}\,U_{0,4}\,U_{3,5}\\[-2pt]
     V_{6,6}\,U_{6,7}\,V_{9,8}\,E_9\end{gathered}$};
  \node[anchor=center,align=center,font=\scriptsize,inner sep=0pt] at (3.5,4.7)
    {$\begin{gathered}V_{5,1}\,V_{5,2}\,V_{5,3}\,V_{5,4}\,V_{5,5}\\[-2pt]
     V_{5,6}\,V_{5,7}\,V_{5,8}\,E_5\end{gathered}$};
  \node[anchor=center,align=center,font=\scriptsize,inner sep=0pt] at (3.5,3.3)
    {$\begin{gathered}U_{1,1}\,U_{4,2}\,V_{7,3}\,V_{7,4}\,V_{7,5}\\[-2pt]
     U_{7,6}\,V_{10,7}\,V_{10,8}\,E_{10}\end{gathered}$};
  \node[anchor=center,align=center,font=\scriptsize,inner sep=0pt] at (3.5,1.9)
    {$\begin{gathered}V_{6,1}\,V_{6,2}\,U_{6,3}\,V_{9,4}\,V_{9,5}\\[-2pt]
     V_{9,6}\,V_{9,7}\,V_{9,8}\,E_9\end{gathered}$};
  \node[anchor=center,align=center,font=\scriptsize,inner sep=0pt] at (3.5,.5)
    {$\begin{gathered}V_{5,1}\,V_{5,2}\,V_{5,3}\,V_{5,4}\,V_{5,5}\\[-2pt]
     V_{5,6}\,V_{5,7}\,U_{5,8}\,E_8\end{gathered}$};
\node[font=\small] at (3.5,-.75) {word};

  \draw[blue!55,densely dashed,line width=.4pt]
    (12,-.35)--(12,8.35);
  \node[blue!70,font=\scriptsize,anchor=south]
    at (12,8.43) {$x=0$};
  \def\RBox#1#2#3{%
    \draw ({11+#2},#1) rectangle ({12+#2},{#1+1});%
    \node[font=\scriptsize] at ({11.5+#2},{#1+.5}) {$#3$};}
  \def\VLeft#1#2{%
    \node[font=\tiny,anchor=center,inner sep=0pt] at ({11.5+#2},{#1+.5}) {$-\infty$};}
  \def\VRight#1#2{%
    \node[font=\tiny,anchor=center,inner sep=0pt] at ({11.5+#2},{#1+.5}) {$+\infty$};}
  \VLeft{7}{0}\VRight{7}{1}
  \draw (12,7)--(12,8);
  \VLeft{5.6}{-2}\RBox{5.6}{-1}{4}
  \RBox{5.6}{0}{5}\RBox{5.6}{1}{7}\VRight{5.6}{2}
  \VLeft{4.2}{0}\VRight{4.2}{1}
  \draw (12,4.2)--(12,5.2);
  \VLeft{2.8}{-1}\RBox{2.8}{0}{1}
  \RBox{2.8}{1}{2}\RBox{2.8}{2}{6}\VRight{2.8}{3}
  \VLeft{1.4}{0}\RBox{1.4}{1}{3}\VRight{1.4}{2}
  \VLeft{0}{0}\RBox{0}{1}{8}\VRight{0}{2}
  \node[font=\small] at (11.5,-.75) {RST};

  \begin{scope}[shift={(17,0)}]
    \draw[step=1,gray!35,very thin] (0,0) grid (6,8);
    \draw[blue,dashed,line width=.85pt] (0,0)--(6,8);
    \draw[blue!65,densely dashed,line width=.7pt] (1,0)--(6,6.6667);
    \draw[line width=1pt]
      (0,0)--(1,0)--(1,3)--(2,3)--(3,3)
      --(3,6)--(4,6)--(4,7)--(5,7)--(5,8)--(6,8);
    \foreach \x/\y/\z in
      {1/.5/4,1/1.5/5,1/2.5/7,
       3/3.5/1,3/4.5/2,3/5.5/6,4/6.5/3,5/7.5/8}
      \node[font=\scriptsize,fill=white,inner sep=.6pt] at ({\x+.5},\y) {$\z$};
  \end{scope}
  \node[font=\small] at (20,-.75) {preference function};
\end{tikzpicture}
\captionof{figure}{A second $(6,8)$-preference function, its RST, and its word}\label{figAlternativeWord}
\end{minipage}
\end{center}

The new preference function has vertical run lengths
$(0,3,0,3,1,1)$. From left to right, its first three columns
carry $\varnothing,(4,5,7),\varnothing$, and its last three
carry $(1,2,6),(3),(8)$.
In particular, the first line of the word has no $U$, matching the empty
top RST row and the empty run in column $x=0$. The fourth line of the
word contains $U_{1,1},U_{4,2},U_{7,6}$, matching the entries
$1,2,6$ in the fourth RST row and the run in column $x=3$.
This is again a $(6,8)$-preference function.

The root is $(a+n,L+1)=(8,9)$. A tree rooted there must select
$U_{5,8}$ at $(5,8)$; at $(4,2)$ it may select either
$U_{4,2}$ or $V_{4,2}$, of weights $1$ and $q$.
The seven states $(5,i)$, $1\leq i\leq7$, together with
$(9,8)$, have two $V$-departures each; $(9,9)$ has two
$E_9$-departures. Each of these nine states supplies a
factor $[2]_q$.
Consequently,
\[
T_{8,9}(q)=[2]_q^9(1+q)=[2]_q^{10}.
\]
Reading the queues for these two choices gives the word of
Example~\ref{exampleRowWords} and the word in
Figure~\ref{figAlternativeWord}; choosing $V_{5,8}$ last
does not give a tree rooted at $(8,9)$. Thus these are exactly
the two proper words with this profile and initial state.
The new word has $I=0$ and
$h_\sigma=4$; the word of Example~\ref{exampleRowWords}
has $I=1$ and $h_\sigma=5$. Hence
\[
\sum_{w\in\mathfrak W_4(\mathfrak p)}q^{I(w)}
=1+q,
\qquad
\phi_4(\mathfrak p;q)=q^4+q^5.
\]
\end{example}

\begin{lemma}\label{lemmaQueueTree}
If $\nu_a>0$, then
\begin{equation}\label{eqle1}
\sum_{w\in\mathfrak W_a(\mathfrak p)}q^{I(w)}
=
\Gamma_{\mathfrak p}(q)\,
\frac{[\nu_a]_q}{D_{\mathfrak p}(q)}\,
T_{a+n,L+1}(q).
\end{equation}
\end{lemma}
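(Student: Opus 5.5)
The plan is a $q$-weighted BEST theorem argument. By the converse construction in Section~\ref{subsec:words}, $\mathfrak W_a(\mathfrak p)$ is exactly the set of closed walks with profile $\mathfrak p$ that begin at $(a,1)$. Such a walk is an Eulerian circuit of the multigraph on the visited states whose edges are the letters of $\mathfrak p$; the edges are distinguished only by type. Since the words are sequences of letters, no edge labels need to be attached. A walk starting at $(a,1)$ is therefore determined by the collection of queues, one at each state: the ordered sequence of departure types from that state. The final letter of the walk arrives at $(a,1)$, so by \eqref{eqbal2} it is some $E_{a+n}$ departing from $(a+n,L+1)$. This explains why the root of the tree is $z=(a+n,L+1)$.

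First I would prove the standard BEST correspondence in this setting. Fix an assignment of queues. It comes from a closed walk starting at $(a,1)$ and ending with a departure from $z$ if and only if two conditions hold. First, at every visited state other than $z$, the last-departure types form a tree rooted at $z$. Second, at $z$ the last departure is $E_{a+n}$, which is automatic because every departure from $z$ has this type. Next comes the starting convention. The walk begins at $(a,1)$, not at $z$. Its departures at $(a,1)$ are ordered by the queue, but the first letter must be one of the $g_{a,1}=\nu_a$ departures.

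I would handle this by standard rotation. Pick which of the $\nu_a$ occurrences of $E_{a+n}$ at $z$ is the last letter; rotating the circuit so that it ends with that occurrence gives a bijection. The distinct words then differ by which $E$ at $z$ is the final one. Equivalently, the $w_{a+n}=\nu_a$ identical letters $E_{a+n}$ form a queue of weight $1$, and the choice of position yields the count $[\nu_a]_q/[w_{a+n}]_q\cdot[w_{a+n}]_q$. This needs care, since $I$ does not see $E$-letters.

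Given the bijection, the weighted count factorizes queue by queue, because $I(w)=\sum I_{r,i}(w)$ and the queues are independent once the last departures are fixed. At a state $(r,i)$ with $i\le L$ whose last departure is selected, the computation before the lemma gives $\gamma_{r,i}(q)\operatorname{wt}(e)$. At a state $(r,L+1)$ other than $z$, the weight is $1=[w_r]_q^{-1}\operatorname{wt}(E_r)$. Multiplying over the non-root states and summing over trees produces $\Gamma_{\mathfrak p}\,T_z/\prod_{(r,L+1)\neq z}[w_r]_q$.

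The root $z$ has no selected letter. Its contribution must be $[\nu_a]_q/[w_{a+n}]_q$ times the weight $1$ of the queue of its $E$-letters, which gives $[\nu_a]_q/D_{\mathfrak p}$ overall. Here one must check that $z$ is a state with $i=L+1$, so no $\gamma$ factor is missing there, and that $w_{a+n}=\nu_a$ by \eqref{eqbal2}. If $z$ had $U/V$ departures, the root-queue weight would instead need the full $q$-binomial; this case does not arise.

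The main obstacle is making the multiplicity $[\nu_a]_q$ exact. Words are sequences of letter types, not labeled edges, so the naive BEST count must be corrected. The expected statement is that closed walks from $(a,1)$, read as type sequences, correspond bijectively to pairs consisting of a queue system with a valid tree and a choice of which of the $\nu_a$ entries into $(a,1)$ is the last step. Under the $q$-weights of $I$, which ignore $E$-letters, this choice contributes $\nu_a$ rather than $[\nu_a]_q$.

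So I would first try to show that $[\nu_a]_q$ arises from something else. One candidate is the queue at $(a,1)$: the first departure is forced to be the first element of the queue, so the walk is determined by the queues alone, with no extra choice. Then, by the BEST theorem with root at $z$, the count would be $\Gamma_{\mathfrak p}T_z/\prod_{r\ne a+n}[w_r]_q$. This is the stated formula precisely when $[\nu_a]_q/[w_{a+n}]_q=1$, which holds. Thus the factor is simply the normalization of the root $E$-queue, and I would verify this bookkeeping against Example~\ref{exampleTreeProfile}.
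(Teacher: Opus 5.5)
Your final argument is the paper's: select the last departure type at every visited state other than $z=(a+n,L+1)$ to get a tree rooted at $z$, factor each queue as $\gamma_{r,i}(q)\operatorname{wt}(e)$ or $[w_r]_q^{-1}\operatorname{wt}(E_r)$, and observe that the forced queue at $z$ makes $[\nu_a]_q=[w_{a+n}]_q$ a pure normalization, so $\Gamma_{\mathfrak p}T_z/\prod_{r\neq a+n}[w_r]_q$ is exactly the right side of \eqref{eqle1}. Delete the rotation paragraph and the claimed bijection with ``a choice of which of the $\nu_a$ entries is last'': for sequences of letter types there is no such choice, and keeping it would introduce a spurious factor $\nu_a$. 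You should also write out the short two-way proof of the rooted-at-$z$ BEST correspondence, as the paper does: last-departure times strictly increase along selected transitions, which rules out cycles; conversely, reading from $(a,1)$ with one $E_{a+n}$ set aside can halt only at $z$, and leftover letters would force an unused departure at $z$ along the tree.
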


\begin{proof}
For $w\in\mathfrak W_a(\mathfrak p)$, let
$z=(a+n,L+1)$. The final letter is $E_{a+n}$.
At every other visited state, select the type of its
last departure in $w$. The selected transitions form
a tree rooted at $z$. Indeed, their departure times
strictly increase along any directed path, so they cannot
form a cycle. Since every other state has a selected
departure, each such path ends at $z$.

Conversely, choose such a tree and, at every state other
than $z$, a queue whose last letter has the selected
type. Set aside one $E_{a+n}$ as the final letter,
and put the remaining copies in the queue at $z$.
Starting at $(a,1)$, repeatedly take the next unused
letter from the queue at the current state.

We now check that this reading uses every letter before
the final $E_{a+n}$. Once that letter is set aside,
the counts have one extra departure at $(a,1)$ and
one extra arrival at $z$. Hence the reading can stop
only at $z$. If letters remained unused there, their
arrivals and departures would balance at every state.
A state with an unused departure would still have its
selected last departure unused. Following the selected
transitions would then force an unused departure at $z$,
contradicting the fact that the reading stopped at $z$. Thus every letter
is used, and appending $E_{a+n}$ gives a closed walk with profile
$\mathfrak p$ that begins at $(a,1)$; it is a proper word by the converse
construction in Section~\ref{subsec:words}.

The common factors of the queues contribute
$\Gamma_{\mathfrak p}(q)/D_{\mathfrak p}(q)$.
The selected last departures contribute the tree weight,
while the final $E_{a+n}$ contributes
$[w_{a+n}]_q=[\nu_a]_q$. Summing over the trees gives
\eqref{eqle1}.
\end{proof}

\begin{lemma}\label{lemmaTreeRecurrence}
For each visited state $(r,i+1)$, with $1\leq i\leq L$,
\begin{equation}\label{rec1}
[g_{r,i+1}]_qT_{r,i+1}(q)
=
[u_{r-m,i}]_qT_{r-m,i}(q)
+
q^{u_{r,i}}[v_{r,i}]_qT_{r,i}(q).
\end{equation}
Terms with zero letter count are omitted. At a visited state
$(r,1)$,
\begin{equation}\label{rec2}
[g_{r,1}]_qT_{r,1}(q)
=
[w_{r+n}]_qT_{r+n,L+1}(q).
\end{equation}
In particular, if $\nu_r>0$, then
$T_{r,1}(q)=T_{r+n,L+1}(q)$.
\end{lemma}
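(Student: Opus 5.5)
The plan is to recognize \eqref{rec1} and \eqref{rec2} as the balance equations of the weighted matrix-tree (Markov chain tree) theorem on the directed multigraph of visited states. In this graph, each letter type $e$ departing from a state is an edge of weight $\operatorname{wt}(e)$. The key preliminary observation is that the total weight of the edges leaving a visited state $(r,i)$ with $1\leq i\leq L$ is
\[
[u_{r,i}]_q+q^{u_{r,i}}[v_{r,i}]_q=[u_{r,i}+v_{r,i}]_q=[g_{r,i}]_q .
\]
If only one of $u_{r,i},v_{r,i}$ is positive, the other term vanishes. At $(r,L+1)$ the only edge is $E_r$, of weight $[w_r]_q=[g_{r,L+1}]_q$. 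So in both cases the left side of the recurrence is (total out-weight of $y$)$\cdot T_y$, where $y$ is the state in question.

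Next we identify the in-edges. Lemma~\ref{lemmabalance} shows that the letters arriving at $y=(r,i+1)$ are exactly the $U_{r-m,i}$, coming from $(r-m,i)$ with weight $[u_{r-m,i}]_q$, and the $V_{r,i}$, coming from $(r,i)$ with weight $q^{u_{r,i}}[v_{r,i}]_q$. By \eqref{eqbal2}, the only letters arriving at $(r,1)$ are the $E_{r+n}$, coming from $(r+n,L+1)$ with weight $[w_{r+n}]_q$. Hence both \eqref{rec1} and \eqref{rec2} take the single form
\[
\Bigl(\sum_{e\text{ out of }y}\operatorname{wt}(e)\Bigr)T_y
=\sum_{e:\,x\to y}\operatorname{wt}(e)\,T_x .
\]

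To prove this identity, we use the standard bijection with functional graphs. A \emph{configuration} selects one departing letter type at every visited state. Its selected edges form a functional graph, each of whose weakly connected components contains exactly one cycle. A configuration is \emph{good} if it is connected, so that it has a unique cycle, and that cycle passes through $y$. Adjoining an out-edge $e$ of $y$ to a tree rooted at $y$ gives a good configuration, and removing the selected edge at $y$ inverts this. So the left side is the weighted sum over good configurations. Likewise, adjoining an edge $e:x\to y$ to a tree rooted at $x$ gives a good configuration in which $e$ is the cycle edge entering $y$. Conversely, in a good configuration, the cycle edge entering $y$ is a unique edge $x\to y$, and deleting it leaves a tree rooted at $x$. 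Both maps multiply the weights, so both sides equal the same weighted sum.

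The only point needing care is the well-definedness of the $T_x$ involved. We need the visited states to be strongly connected, so that trees exist and the bijections land in spanning trees of the same vertex set. This holds because a proper word is a closed walk through all visited states. We also need the convention that terms with zero letter count are dropped. This matches the absence of the corresponding edge, since a vanishing $q$-integer weight would contribute zero anyway. The final claim then follows from \eqref{rec2} and $g_{r,1}=w_{r+n}=\nu_r>0$, by cancelling $[\nu_r]_q\neq0$.
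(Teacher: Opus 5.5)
Your proof is correct and takes essentially the same route as the paper. The paper proves $\deg^+(y)\,T_y(q)=\sum_{f:x\to y}\operatorname{wt}(f)\,T_x(q)$ with the same tree-and-cycle bijection: it adds an out-edge of $y$ to a tree rooted at $y$ and deletes the cycle edge entering $y$. Your set of good configurations merely names the intermediate object in that bijection. The paper then identifies the out-weight $[u_{r,i}]_q+q^{u_{r,i}}[v_{r,i}]_q=[g_{r,i}]_q$ and the in-edges exactly as you do.

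Your strong-connectivity caveat is harmless but unnecessary, because the bijection holds in any finite digraph.
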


\begin{proof}
For a visited state $y$, let $\deg^+(y)$ be the sum of
the weights of the letter types departing from $y$.
Then we claim that
\[
\deg^+(y)\,T_y(q)
=
\sum_{\substack{f\text{ a letter type}\\x\longrightarrow y}}
\operatorname{wt}(f)\,T_x(q).
\]
On the left, choose a tree rooted at $y$ and a letter
type $e$ departing from $y$. Adding $e$ creates
one directed cycle. Let $f:x\to y$ be the transition
on that cycle that arrives at $y$. Removing $f$
makes $x$ the new root, with $f$ as the chosen
letter type on the right. Conversely, add $f$ to a
tree rooted at $x$, then remove the transition on the
resulting cycle that departs from $y$. The product of
the selected weights is preserved, proving the claim.

Take $y=(r,i+1)$, where $1\leq i\leq L$.
The letters arriving at $y$ are $U_{r-m,i}$
and $V_{r,i}$, with weights
$[u_{r-m,i}]_q$ and $q^{u_{r,i}}[v_{r,i}]_q$, respectively.
When $i+1\leq L$, the total weight of types departing
from $y$ is
\[
[u_{r,i+1}]_q+
q^{u_{r,i+1}}[v_{r,i+1}]_q
=[g_{r,i+1}]_q.
\]
When $i=L$, only $E_r$ departs from $(r,L+1)$;
its weight is
$[w_r]_q=[g_{r,L+1}]_q$. Thus, in both cases,
we obtain \eqref{rec1}. Only
$E_{r+n}$ arrives at $(r,1)$, which gives \eqref{rec2}.
When $\nu_r=g_{r,1}=w_{r+n}>0$, we may cancel the common nonzero
factor to obtain $T_{r,1}(q)=T_{r+n,L+1}(q)$.
\end{proof}

\begin{lemma}\label{lemmaTreeRoots}
At every visited state,
\begin{equation}\label{eqclaim}
T_{r,i}(q)
=
K_{\mathfrak p}(q)\,
q^{\sum_{j\geq1}g_{r+jm,i}}, \text{ where }
K_{\mathfrak p}(q)
:=
T_{0,1}(q)\,
q^{-\sum_{j\geq1}g_{jm,1}}.
\end{equation}
In particular, whenever $\nu_a>0$,
\[
T_{a+n,L+1}(q)
=K_{\mathfrak p}(q)\,
q^{\sum_{j\geq1}g_{a+jm,1}}.
\]
\end{lemma}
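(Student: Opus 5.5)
The plan is to show that the vector $F_{r,i}:=q^{\sum_{j\geq1}g_{r+jm,i}}$, indexed by visited states, satisfies the same linear relations as the tree sums $T_{r,i}(q)$. Uniqueness of solutions to these relations up to a scalar then gives \eqref{eqclaim}, and the constant is fixed at the state $(0,1)$. This state is visited because $\nu_0=\ell>0$, and it gives exactly $K_{\mathfrak p}(q)=T_{0,1}(q)q^{-\sum_{j\ge1}g_{jm,1}}$.

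\emph{Step 1 (uniqueness).} In the proof of Lemma~\ref{lemmaTreeRecurrence} we showed that, for every visited $y$,
\[
\deg^+(y)\,T_y=\sum_{f:x\to y}\operatorname{wt}(f)\,T_x .
\]
This is the kernel equation of the weighted Laplacian of the directed graph of visited states, viewed over $\mathbb Q(q)$. The graph is strongly connected, since the visited states are those of a closed walk. By the weighted matrix-tree theorem, the cofactors of this Laplacian are the tree sums $T_y$. Each $T_y$ is nonzero, because at $q=1$ it counts rooted spanning trees, of which there is at least one. Hence the Laplacian has rank $|V|-1$, and its kernel is spanned by $(T_y)_y$. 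It therefore suffices to check that $F$ satisfies the local relations \eqref{rec1} and \eqref{rec2}. These are exactly the kernel equations, with zero-count terms harmless since $[0]_q=0$ and counts at unvisited states vanish.

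\emph{Step 2 (checking \eqref{rec1}).} Fix $r$ and $1\le i\le L$, and put $A=\sum_{j\ge1}v_{r+jm,i}$ and $C=\sum_{j\ge1}u_{r+jm,i}$. Then $\sum_{j\ge1}g_{r+jm,i}=A+C$, and
\[
\sum_{j\ge1}g_{r-m+jm,i}=g_{r,i}+A+C.
\]
By Lemma~\ref{lemmabalance}, $g_{s,i+1}=v_{s,i}+u_{s-m,i}$, so
\[
\sum_{j\ge1}g_{r+jm,i+1}=A+u_{r,i}+C .
\]
The right side of \eqref{rec1} for $F$ is then
\[
q^{A+C}\bigl([u_{r-m,i}]_q\,q^{u_{r,i}+v_{r,i}}+q^{u_{r,i}}[v_{r,i}]_q\bigr)
=q^{A+C+u_{r,i}}\,[u_{r-m,i}+v_{r,i}]_q ,
\]
using $q^{v}[u']_q+[v]_q=[u'+v]_q$. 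Since $u_{r-m,i}+v_{r,i}=g_{r,i+1}$, this equals $[g_{r,i+1}]_q F_{r,i+1}$, as required.

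\emph{Step 3 (checking \eqref{rec2}).} By \eqref{eqbal2}, $g_{r,1}=w_{r+n}$ and $g_{r+jm,1}=g_{r+n+jm,L+1}$. Hence $F_{r,1}=F_{r+n,L+1}$, and \eqref{rec2} holds for $F$.

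\emph{Conclusion.} Steps 1–3 give $T=K F$ for a scalar $K$, and evaluating at $(0,1)$ identifies $K=K_{\mathfrak p}(q)$. For the final assertion, suppose $\nu_a>0$. Lemma~\ref{lemmaTreeRecurrence} gives $T_{a+n,L+1}=T_{a,1}$, and \eqref{eqclaim} at $(a,1)$ gives the stated formula.

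The main obstacle is the rank statement in Step 1: that the tree sums span the kernel, so that local consistency forces proportionality. I would deduce it from the matrix-tree theorem together with the nonvanishing of the $T_y$ at $q=1$. The algebraic verifications in Steps 2 and 3 are short.
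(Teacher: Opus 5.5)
Your proof is correct. Steps 2 and 3 are the same verifications the paper carries out: the index shift, Lemma~\ref{lemmabalance} applied at the levels $r+jm$, and $q^{v}[u']_q+[v]_q=[u'+v]_q$, followed by \eqref{eqbal2} for the row transition.

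You differ from the paper in the uniqueness step.

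\begin{itemize}
\item The paper fixes a real $q>0$ and divides the recurrence for $T$ by the one for $F$. Each quotient $T_y/F_y$ is then a weighted average, with positive weights, of the quotients at the states arriving at $y$. A maximum principle, together with strong connectivity (following a closed walk backward), shows that the quotient is constant. The identity in $q$ follows because it holds for all $q>0$.
\item You work directly over $\mathbb{Q}(q)$. You read the relations as the left-kernel equations of the out-degree Laplacian and use Tutte's directed weighted matrix-tree theorem to identify its principal cofactors with the in-tree sums $T_y$. Since some $T_y(1)>0$, the Laplacian has rank $|V|-1$, so its left kernel is a line containing both $T$ and the nonzero vector $F$.
\end{itemize}

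Your route avoids specializing $q$ and the positivity argument, but it imports the matrix-tree theorem, which is essentially the Markov chain tree theorem. The paper's argument is self-contained and uses only the recurrence already established in Lemma~\ref{lemmaTreeRecurrence}.

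A minor wording point: at $q=1$ the tree sum counts trees weighted by the positive integers $u_{r,i},v_{r,i},w_r$, not trees alone. Positivity is all your argument needs, so this does not affect it.
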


\begin{proof}
The state $(0,1)$ is visited because $\nu_0=\ell>0$,
so the value there fixes the common factor. We first
verify that the proposed expression satisfies
\eqref{rec1}. Shifting the index gives
\[
\sum_{j\geq1}g_{r-m+jm,i}
=
g_{r,i}+\sum_{j\geq1}g_{r+jm,i}.
\]
Lemma~\ref{lemmabalance}, applied at each $r+jm$, gives
\[
\sum_{j\geq1}g_{r+jm,i+1}
=
\sum_{j\geq1}
\bigl(v_{r+jm,i}+u_{r+(j-1)m,i}\bigr)
=
u_{r,i}+\sum_{j\geq1}g_{r+jm,i}.
\]
Substitute these powers into the right side of
\eqref{rec1} and cancel $K_{\mathfrak p}(q)$. The result is
\[
\begin{aligned}
&[u_{r-m,i}]_q
q^{\sum_{j\geq1}g_{r-m+jm,i}}
+
q^{u_{r,i}}[v_{r,i}]_q
q^{\sum_{j\geq1}g_{r+jm,i}}\\
&\qquad=
q^{\sum_{j\geq1}g_{r+jm,i+1}}
\bigl(q^{v_{r,i}}[u_{r-m,i}]_q+[v_{r,i}]_q\bigr)\\
&\qquad=
q^{\sum_{j\geq1}g_{r+jm,i+1}}
[g_{r,i+1}]_q.
\end{aligned}
\]
Here the last equality follows from
$g_{r,i+1}=v_{r,i}+u_{r-m,i}$ and
$[v]_q+q^v[u]_q=[u+v]_q$. This is the
left side of \eqref{rec1} with the proposed
expression substituted.

For the transition between rows, \eqref{eqbal2} gives
\[
g_{r+jm,1}=g_{r+n+jm,L+1}
\qquad(j\geq1).
\]
Thus the powers at $(r,1)$ and $(r+n,L+1)$ agree,
as required by \eqref{rec2}.

It remains to show that the
recurrences allow only one common factor across all visited
states. Fix $q>0$; every letter type that occurs has positive weight.
Divide the recurrence for $T$ by the recurrence
just verified for the proposed powers of $q$.
At each state, the quotient of $T_{r,i}(q)$ by
$q^{\sum_{j\geq1}g_{r+jm,i}}$ is a weighted average
of the corresponding quotients at states from which
a letter arrives.

Choose a state where this quotient is maximal. Since
all weights are positive, every state with a transition
into it has the same quotient. A proper word with the
realizable profile is a closed walk through every
visited state. Following this walk backward reaches
all of them, so the same value propagates to every
state. At $(0,1)$ it is $K_{\mathfrak p}(q)$.
This proves \eqref{eqclaim} for all $q>0$, and
therefore as an identity in $q$.
For $\nu_a>0$, \eqref{rec2} gives
$T_{a+n,L+1}(q)=T_{a,1}(q)$. The stated expression follows
from \eqref{eqclaim} at $(a,1)$.
\end{proof}

\begin{example}\label{exampleTreeRootCheck}
For the profile of Example~\ref{exampleTreeProfile}, a tree rooted
at $(0,1)$ must select $V_{4,2}$ at $(4,2)$, of weight $q$.
At $(5,8)$, either $U_{5,8}$ or $V_{5,8}$ may be selected,
of weights $1$ and $q$. The other nine nonunit factors are
the $[2]_q$ factors identified in that example. Direct counting
therefore gives
\[
T_{0,1}(q)=q[2]_q^9(1+q)=q[2]_q^{10}.
\]
Here $\sum_{j\geq1}g_{3j,1}=\nu_6=1$, whereas
$\sum_{j\geq1}g_{4+3j,1}=0$. Thus Lemma~\ref{lemmaTreeRoots}
gives $K_{\mathfrak p}(q)=q^{-1}T_{0,1}(q)=[2]_q^{10}$ and
$T_{8,9}(q)=[2]_q^{10}$, in agreement with the direct tree count
in Example~\ref{exampleTreeProfile}.

Moreover, $\Gamma_{\mathfrak p}(q)=[2]_q^{-8}$,
$D_{\mathfrak p}(q)=[2]_q$, and $\nu_4=1$. In this case,
Lemma~\ref{lemmaQueueTree} gives
\[
\sum_{w\in\mathfrak W_4(\mathfrak p)}q^{I(w)}
=[2]_q^{-8}\frac{1}{[2]_q}[2]_q^{10}
=[2]_q=1+q,
\]
as found in Example~\ref{exampleTreeProfile}.
\end{example}

\begin{proposition}\label{propstep2}
For every $a\geq0$ and every realizable profile $\mathfrak{p}$,
\[
\sum_{w\in\mathfrak{W}_a(\mathfrak{p})}q^{I(w)}
=
C_{\mathfrak{p}}(q)\,
q^{\sum_{j\geq1}g_{a+jm,1}}[\nu_a]_q,
\]
where $C_{\mathfrak{p}}(q)$ is a rational function of $q$ that does not
depend on $a$.
\end{proposition}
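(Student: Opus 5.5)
The natural approach is to combine Lemma~\ref{lemmaQueueTree} with Lemma~\ref{lemmaTreeRoots}, and to set
\[
C_{\mathfrak p}(q):=\frac{\Gamma_{\mathfrak p}(q)\,K_{\mathfrak p}(q)}{D_{\mathfrak p}(q)}.
\]
This is manifestly a rational function of $q$ built only from the profile. The factor $\Gamma_{\mathfrak p}$ is a product of $q$-multinomial quotients, $D_{\mathfrak p}$ is a product of nonzero $q$-integers, and $K_{\mathfrak p}=T_{0,1}q^{-\sum_{j\ge1}g_{jm,1}}$ is a Laurent polynomial. Moreover, $(0,1)$ is visited because the profile is realizable: a proper word has a row starting at level $0$, by the minimum-rank argument of Section~\ref{subsec:words}. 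None of these quantities involves $a$.

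I would then split into two cases according to whether $\nu_a>0$.

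\emph{Case $\nu_a>0$.} Lemma~\ref{lemmaQueueTree} gives
\[
\sum_{w\in\mathfrak W_a(\mathfrak p)}q^{I(w)}=\Gamma_{\mathfrak p}\,\frac{[\nu_a]_q}{D_{\mathfrak p}}\,T_{a+n,L+1}.
\]
The ``in particular'' clause of Lemma~\ref{lemmaTreeRoots} replaces $T_{a+n,L+1}$ by $K_{\mathfrak p}\,q^{\sum_{j\ge1}g_{a+jm,1}}$. Substituting and regrouping yields exactly the claimed formula with the constant $C_{\mathfrak p}$ above.

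\emph{Case $\nu_a=0$.} The right side vanishes because $[0]_q=0$, so it suffices to show $\mathfrak W_a(\mathfrak p)=\varnothing$. A word in $\mathfrak W_a(\mathfrak p)$ begins with a letter departing from $(a,1)$, so $g_{a,1}>0$. But $g_{a,1}=\nu_a$ by the definition of $\nu$ and \eqref{eqbal2}. Hence no such word exists, and both sides are zero.

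The only point requiring care is this uniformity: Lemma~\ref{lemmaQueueTree} is stated only when $\nu_a>0$, and one must check that the constant obtained there does not secretly depend on $a$. That $a$-independence is exactly what Lemma~\ref{lemmaTreeRoots} supplies, since it fixes all tree sums $T_{r,i}$ up to the single common factor $K_{\mathfrak p}$ determined at $(0,1)$. With these two lemmas in hand, the proof is a direct substitution.
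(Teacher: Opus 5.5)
Your proposal is correct and matches the paper's proof. The paper also dismisses the case $\nu_a=0$ because $\mathfrak W_a(\mathfrak p)=\varnothing$, and for $\nu_a>0$ it combines Lemmas~\ref{lemmaQueueTree} and~\ref{lemmaTreeRoots} with exactly the constant $C_{\mathfrak p}(q)=\Gamma_{\mathfrak p}(q)K_{\mathfrak p}(q)/D_{\mathfrak p}(q)$; your extra remarks, that $g_{a,1}=\nu_a$ forces emptiness when $\nu_a=0$ and that $(0,1)$ is always visited, only make explicit what the paper leaves implicit.
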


\begin{proof}
If $\nu_a=0$, there is no word starting at $(a,1)$,
and both sides vanish. Suppose $\nu_a>0$. Combining
Lemmas~\ref{lemmaQueueTree} and \ref{lemmaTreeRoots} gives
\[
\sum_{w\in\mathfrak{W}_a(\mathfrak{p})}q^{I(w)}
=
\Gamma_{\mathfrak p}(q)\,
\frac{[\nu_a]_q}{D_{\mathfrak p}(q)}\,
K_{\mathfrak p}(q)\,
q^{\sum_{j\geq1}g_{a+jm,1}} =
C_{\mathfrak p}(q)\,
q^{\sum_{j\geq1}g_{a+jm,1}}[\nu_a]_q,
\]
where
\[
C_{\mathfrak p}(q)
:=
\frac{\Gamma_{\mathfrak p}(q)K_{\mathfrak p}(q)}
{D_{\mathfrak p}(q)}.
\]
Every factor in $C_{\mathfrak p}(q)$ depends
only on the profile.
\end{proof}

\begin{proof}[Proof of Proposition~\ref{propqcyc}]
Fix a realizable profile $\mathfrak p$. The assertion
is immediate when $\nu_a=0$, since
$\mathfrak W_a(\mathfrak p)=\varnothing$.
Assume $\nu_a>0$.

Proposition~\ref{propstep1} separates the order-dependent
part of $h_\sigma$ from $I$, and Proposition~\ref{propstep2}
enumerates the latter. Their two exponents have a simple
combined dependence on $a$. By Remark~\ref{remark1}
and the definition of $G$,
\begin{align*}
\eta_a+\sum_{j\geq1}g_{a+jm,1}
&=
\eta_0+G_{a,1}-G_{0,1}
+\sum_{j\geq1}g_{a+jm,1}\\
&=
\eta_0+\sum_{r<a}g_{r,1}
+\sum_{j\geq1}g_{jm,1}\\
&=
P_a+\eta_0+\sum_{j\geq1}g_{jm,1}.
\end{align*}
The last equality uses $g_{r,1}=\nu_r$ and the
convention that counts at negative levels vanish.
In particular, the contribution from moving the
initial state and the tail sum in Proposition~\ref{propstep2}
combine to give $P_a$.

Propositions~\ref{propstep1} and \ref{propstep2},
together with $\nu_0=\ell$, now give
\begin{align*}
[\ell]_q\phi_a(\mathfrak p;q)
&=
[\nu_0]_q\,C_{\mathfrak p}(q)\,
q^{\eta_a+\sum_{j\geq1}g_{a+jm,1}}[\nu_a]_q\\
&=
q^{P_a}[\nu_a]_q\,C_{\mathfrak p}(q)\,
q^{\eta_0+\sum_{j\geq1}g_{jm,1}}[\nu_0]_q\\
&=
q^{P_a}[\nu_a]_q\phi_0(\mathfrak p;q),
\end{align*}
as required.
\end{proof}

\subsection{Proof of Theorem~\ref{thmDelta}}\label{subsec:delta}
The proof of Iraci, Pagaria, and Paolini~\cite[Theorem~1.2, Section~5.2]{IPP26}
contains a combinatorial skewing argument for weighted preference
function sums. Applying it to Theorem~\ref{thmmain} gives the
following identity in our operator notation.

\begin{corollary}\label{cor:fall-decorated}
Let $k,m,n$ be positive integers with $\gcd(m,n)=1$, and let
$r\geq0$. Then
\begin{equation}\label{eq:fall-decorated}
(-1)^{k-1}\frac{[km]_q}{[k]_q}\,
s_{(km-1)^r}^{\perp}
\bigl(p_k[-MX^{m,n+rm}]\cdot1\bigr)
=
\sum_{\pi\in\operatorname{LRP}(km+r,kn+r)_{\ast r}}
q^{\dinv(\pi)}t^{\area(\pi)}x_\pi.
\end{equation}
\end{corollary}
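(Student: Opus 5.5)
We will derive Corollary~\ref{cor:fall-decorated} from Theorem~\ref{thmmain} applied to the coprime pair $(m,n+rm)$, using the combinatorial skewing argument of Iraci, Pagaria, and Paolini~\cite[Section~5.2]{IPP26}. Since $\gcd(m,n+rm)=\gcd(m,n)=1$, Theorem~\ref{thmmain} gives
\[
(-1)^{k-1}\frac{[km]_q}{[k]_q}\,p_k[-MX^{m,n+rm}]\cdot1
=\sum_{\pi\in\Pref_{km,k(n+rm)}}q^{\dinv(\pi)}t^{\area(\pi)}x_\pi .
\]
The prefactor is a scalar in $q$, so it commutes with $s_{(km-1)^r}^\perp$. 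It therefore suffices to show that applying $s_{(km-1)^r}^\perp$ to the right side produces the sum over $\operatorname{LRP}(km+r,kn+r)_{\ast r}$.

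The combinatorial step follows~\cite{IPP26}. We split the label alphabet into two ordered blocks: $r$ "big" letters placed above all ordinary letters, and the ordinary letters. By the standard coproduct argument, $s_\lambda^\perp F$ is the coefficient of $s_\lambda[Y]$ in $F[X+Y]$. The sum on the right is a sum over paths of the symmetric functions $N^\sigma_{\beta/\alpha}$ (Proposition~\ref{propdinvh} and Proposition~\ref{propNsym}), so we may extract coefficients through a superization or tableau argument.

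The extraction is expected to go as follows. The coefficient of $s_{(km-1)^r}[Y]$ forces the big labels to occupy exactly $r$ rows of length $km-1$ in a rigid configuration. In path language, these are $r$ blocks of $km-1$ North steps that each sit in distinct columns and are interleaved with the other steps. Contracting these blocks turns a $(km,k(n+rm))$-path into a $(km+r,kn+r)$-path ending in East. The $r$ contracted blocks become the $r$ decorated falls, that is, East steps followed by East steps.

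We then check that dinv and area transform to the statistics of~\cite[Section~3]{IPP26}, and that the bonus term $B(\pi)$ matches their convention. Because~\cite{IPP26} carried out exactly this skewing argument for general rectangular preference functions, which is how they derive their Theorem~1.2 from the case $k=1$, our plan is to verify that their argument uses only the preference-function identity itself, and not the value of $k$. Their bijection and its statistic bookkeeping then transfer verbatim once Theorem~\ref{thmmain} replaces their $k=1$ input.

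The main obstacle is this verification. We must confirm that the IPP skewing lemma is stated at the level of the combinatorial sums over $\Pref_{km,kN}$ with $N=n+rm$, and that the touch behavior and the base diagonal shift for paths below the diagonal do not interact with the decorated falls in a $k$-dependent way. We would first check the statistic correspondence on the data of Example~\ref{examplePathStats}, and check small cases such as $k=2$, $m=1$, $r=1$ against Theorem~\ref{thmDelta}'s expected form. Only after that would we cite~\cite[Section~5.2]{IPP26} to conclude.
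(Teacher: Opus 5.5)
Your plan matches the paper's proof. The paper also applies Theorem~\ref{thmmain} to the coprime pair $(m,n+rm)$ and uses the skewing argument of \cite{IPP26} as an identity between weighted sums over $\Pref_{km,k(n+rm)}$ and $\operatorname{LRP}(km+r,kn+r)_{\ast r}$, with Propositions~\ref{propNsym} and~\ref{propdinvh} ensuring that the skewing acts on a symmetric function and \cite[Remarks~3.5 and~3.22]{IPP26} matching the undecorated statistics. The $k$-independence you flag as the main obstacle is exactly what the paper takes directly from \cite[Propositions~4.5, 4.10, 4.15; Theorem~5.1; Section~5.2]{IPP26}; there the decorated configurations survive a sign-reversing involution on the signed labelings produced by expanding $s_{(km-1)^r}^\perp$, rather than being forced directly by extracting a Schur coefficient.
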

\begin{proof}
The argument of Iraci, Pagaria, and Paolini~\cite[Propositions~4.5, 4.10, and 4.15;
Theorem~5.1 and Section~5.2]{IPP26} gives the following
identity directly for weighted path sums:
\[
s_{(km-1)^r}^{\perp}
\sum_{\rho\in\Pref_{km,k(n+rm)}}
q^{\dinv(\rho)}t^{\area(\rho)}x_\rho
=
\sum_{\pi\in\operatorname{LRP}(km+r,kn+r)_{\ast r}}
q^{\dinv(\pi)}t^{\area(\pi)}x_\pi.
\]
The skewing operator is applied here to a symmetric function. Indeed,
the area does not depend on the labels, and by
Propositions~\ref{propNsym} and~\ref{propdinvh}, the sum of
$q^{\dinv(\rho)}x_\rho$ over the labelings $\rho$ of a fixed path is
$N^\sigma_{\beta/\alpha}(X;q)$, with $\alpha,\beta,\sigma$ as in
Definition~\ref{defalpha} for that path. Schur skewing first expands the
left side into signed labelings.
The sign-reversing involution in that argument cancels the terms
without the required fall decorations; its surviving terms preserve
the area and dinv weights. Without decorations, the area and dinv
of Iraci, Pagaria, and Paolini agree with those of
Section~\ref{subsec:comb} by \cite[Remarks~3.5 and 3.22]{IPP26}.
Since $\gcd(m,n+rm)=1$, Theorem~\ref{thmmain} identifies the sum
on the left with
$(-1)^{k-1}[km]_q/[k]_q\,
p_k[-MX^{m,n+rm}]\cdot1$, proving \eqref{eq:fall-decorated}.
\end{proof}

The following proposition supplies the operator identity needed for
Theorem~\ref{thmDelta}. Iraci, Pagaria, and Paolini~\cite[Conjecture~6.10]{IPP26}
proposed the identity
\[
\Theta_{e_d}\nabla\omega(p_k)=s^\perp_{(k-1)^d}\,p_{k,k(d+1)},
\]
where we write $k$ and $d$ for their $n$ and $k$. If $p_{k,k(d+1)}$ is
normalized as in footnote~\ref{fnsign}, so that
$p_{k,k(d+1)}=p_k[-MX^{1,d+1}]\cdot1$, then the left side should be
$\Theta_{e_d}\nabla p_k=(-1)^{k-1}\Theta_{e_d}\nabla\omega(p_k)$.

\begin{proposition}\label{propDelta}
For $k\geq1$ and $d\geq0$,
\[
\Theta_{e_d}\nabla p_k
=
s_{(k-1)^d}^{\perp}
\bigl(p_k[-MX^{1,d+1}]\cdot1\bigr).
\]
\end{proposition}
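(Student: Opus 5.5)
Both sides will be expanded in Negu\textcommabelow{t} elements acting on $1$, and the two sums will be compared term by term. On the right, multiply \eqref{eqnegut} by $(-1)^{k-1}$ and apply $s_{(k-1)^d}^\perp$. Proposition~\ref{propnegut} then gives
\[
(-1)^{k-1}s_{(k-1)^d}^{\perp}\bigl(p_k[-MX^{1,d+1}]\cdot1\bigr)
=\sum_{i=0}^{k-1}c_i\sum_{\substack{\beta\in\mathbb Z_{\geq0}^k\\|\beta|=d}}D_{\gamma^{(i)}+\beta}\cdot1,
\]
where $c_0=1$ and $c_i=(qt)^{k-i}$ for $i\geq1$. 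On the left, apply \eqref{eqnegut} with $d=0$; by the relation $f[-MX^{1,1}]\cdot1=\nabla f$ this yields
\[
(-1)^{k-1}\nabla p_k=\sum_{i=0}^{k-1}c_iD_{\gamma^{(i)}}\cdot1.
\]
Applying $\Theta_{e_d}$ and using Theorem~\ref{thmTheta} turns the left side into $\sum_i c_i\sum_{u\in\mathcal W_d(\gamma^{(i)})}D_{\operatorname{Des}(u)}\cdot1$. Before doing so, the terminal $0$ of $\gamma^{(i)}$ for $i\geq1$ is removed using the remark after Theorem~\ref{thmTheta}, so that Theorem~\ref{thmTheta} is applied to a genuine composition of length $k-1$. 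The factor $(-1)^{k-1}$ is common to both sides and cancels.

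It therefore suffices to prove, for each $i$,
\[
\sum_{u\in\mathcal W_d(\gamma^{(i)})}D_{\operatorname{Des}(u)}\cdot1=\sum_{|\beta|=d}D_{\gamma^{(i)}+\beta}\cdot1 .
\]
For this I will build a bijection $u\mapsto\beta$. The word $w(\gamma^{(i)})$ consists of blocks of equal letters $\ell,\ell-1,\dots,1$, where $\ell$ is the number of parts. Inserting copies of $\ell+1$ and reading off descent blocks, every maximal run of inserted letters $\ell+1$ that is followed by a smaller letter closes a weakly increasing block. Then $\operatorname{Des}(u)$ records which old letters were absorbed into each block, together with the inserted letters. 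I expect $\operatorname{Des}(u)$ to have length $\ell$ or $\ell+1$, the latter when letters are inserted at the end. For $\gamma^{(0)}=(1^k)$ the word is $k(k-1)\cdots1$. Distributing $d$ copies of $k+1$ into the $k$ gaps after each old letter (the insertion cannot precede the first letter) gives $\operatorname{Des}(u)=(1+\beta_1,\dots,1+\beta_k)$, with $\beta_j$ the number of copies placed after the $j$th letter. This is exactly $\gamma^{(0)}+\beta$, a clean bijection.

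For $i\geq1$ the block equal to $2$ complicates the count, because copies inserted between its two equal letters merge blocks differently. Copies inserted at the very end produce a new final part of length $\beta_k$; this matches $\gamma^{(i)}+\beta$, whose last entry is $0+\beta_k$. Identifying $D_{\cdots,0}\cdot1$ with the shorter vector handles the case $\beta_k=0$. The main obstacle is the double block $(2)$. Inserting $c$ copies between its two letters gives the descent parts $(1,\,1+c+\dots)$ rather than $(2+\cdot)$, and these do not correspond directly to any $\gamma^{(i)}+\beta$. I expect that the distinct words do not biject with $\beta$ on the nose. Instead, the words with insertions inside the $2$-block should match, up to the identity, terms coming from $\gamma^{(i+1)}$ or $\gamma^{(i-1)}$ with a shifted power of $qt$, or should cancel via a relation among the $D_\gamma\cdot1$ such as the straightening relations of Blasiak et al.

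So my first attempt will be the direct bijection. If it fails, I will instead regroup the sum over $i$ as a whole, matching $\sum_i c_i\,\mathcal W_d(\gamma^{(i)})$ with $\sum_i c_i\{\gamma^{(i)}+\beta\}$ after reindexing the two sums. I will check the cases $k=2$, $d=1$ explicitly to fix the correspondence before writing the general argument.
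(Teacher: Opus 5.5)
Your reduction is correct and is the paper's: you write $\nabla p_k=p_k[-MX^{1,1}]\cdot1$, expand both sides with \eqref{eqnegut}, apply Proposition~\ref{propnegut} on the right and Theorem~\ref{thmTheta} on the left, and reduce to $\Theta_{e_d}(D_{\gamma^{(i)}}\cdot1)=\sum_{|\beta|=d}D_{\gamma^{(i)}+\beta}\cdot1$ for each $i$. Your bijection for $i=0$ is also the paper's.

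\textbf{The gap.} The case $1\le i<k$ is not proved. The obstacle you anticipate there comes from a miscount of $\operatorname{Des}(u)$. Write $x_1\cdots x_k$ for the old letters of $w(\gamma^{(i)})$, with the terminal zero removed. Then $x_i=x_{i+1}$, and otherwise $x_1>\cdots>x_k$. Let $u_j\ge0$ be the number of copies of the new letter $k$ inserted after $x_j$. Every old letter is smaller than $k$, so the copies after $x_j$ always join the block of $x_j$, and the block closes after them whenever another old letter follows. Hence inserting $c>0$ copies between the equal letters gives parts $(1+c,\,1+u_{i+1})$, not $(1,1+c+\cdots)$. Likewise, copies inserted at the very end join the last block $x_k\,k^{u_k}$ rather than creating a new part, which is what your own $i=0$ computation already shows. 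The correct count is
\[
\operatorname{Des}(u)=
\begin{cases}
(1+u_1,\ldots,1+u_{i-1},\,2+u_{i+1},\,1+u_{i+2},\ldots,1+u_k), & u_i=0,\\
(1+u_1,\ldots,1+u_k), & u_i>0.
\end{cases}
\]
So the extra $k$th part appears exactly when the equal pair is separated, not when letters are inserted at the end.

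\textbf{The missing bijection.} Compare coordinatewise with $\gamma^{(i)}+\beta=(1+\beta_1,\ldots,1+\beta_{i-1},2+\beta_i,1+\beta_{i+1},\ldots,1+\beta_{k-1},\beta_k)$.
\begin{itemize}
\item If $u_i=0$, then $\operatorname{Des}(u)=\gamma^{(i)}+\beta$ with $\beta=(u_1,\ldots,u_{i-1},u_{i+1},\ldots,u_k,0)$. The trailing zero may be dropped, as you already note.
\item If $u_i>0$, then $\operatorname{Des}(u)=\gamma^{(i)}+\beta$ with $\beta=(u_1,\ldots,u_{i-1},u_i-1,u_{i+1},\ldots,u_{k-1},u_k+1)$.
\end{itemize}
Distinct words correspond to distinct vectors $u$ with $|u|=d$, since nothing may be inserted before $x_1$ and the new letter differs from all old ones. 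The two maps are therefore bijections onto $\{\beta:|\beta|=d,\ \beta_k=0\}$ and $\{\beta:|\beta|=d,\ \beta_k>0\}$, respectively. This proves the per-$i$ identity directly, so no matching across different $i$, shifted powers of $qt$, or straightening relations is needed. The check you planned for $k=2$, $d=1$ already shows this: the words $121$ and $112$ have descent compositions $(2,1)=\gamma^{(1)}+(0,1)$ and $(3)=\gamma^{(1)}+(1,0)$, with the zero dropped in the second.
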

\begin{proof}
We first show that, for $0\leq i<k$,
\begin{equation}\label{eqThetaGamma}
\Theta_{e_d}\bigl(D_{\gamma^{(i)}}\cdot1\bigr)
=
\sum_{\beta\in\mathbb Z_{\geq0}^k, |\beta|=d}
D_{\gamma^{(i)}+\beta}\cdot1.
\end{equation}
For $i=0$, the word of $\gamma^{(0)}=(1^k)$ is
$k(k-1)\cdots1$. Inserting $u_j$ copies of the new largest
letter after its $j$th letter, where $u_1+\cdots+u_k=d$,
gives descent composition $\gamma^{(0)}+(u_1,\ldots,u_k)$.
Theorem~\ref{thmTheta} therefore proves \eqref{eqThetaGamma}
in this case.

For $1\leq i<k$, omit the final zero from $\gamma^{(i)}$;
this does not change its action on $1$ by
Blasiak et al.~\cite[Lemma~3.6.2]{path}. The associated word has $k$ letters
in decreasing order, except that its $i$th and $(i+1)$st
letters are equal. Again let $u_j$ count the new letters
inserted after the $j$th old letter. If $u_i=0$, the two
equal letters remain in one weakly increasing block and
the descent composition is $\gamma^{(i)}+\beta$ with
\[
\beta=(u_1,\ldots,u_{i-1},u_{i+1},\ldots,u_k,0).
\]
The final zero may be omitted when $D_{\gamma^{(i)}+\beta}$
acts on $1$. If $u_i>0$, the insertion separates the equal
letters and gives the same form with
\[
\beta=(u_1,\ldots,u_{i-1},u_i-1,
       u_{i+1},\ldots,u_{k-1},u_k+1).
\]
The two cases give respectively all $\beta$ of weight $d$
with $\beta_k=0$ and all those with $\beta_k>0$, each
exactly once. Theorem~\ref{thmTheta} now proves
\eqref{eqThetaGamma} for every $i$.

By Lemma~\ref{lemma:nabla},
$p_k[-MX^{1,1}]\cdot1=\nabla p_k$.
Apply $\Theta_{e_d}$ to the case $d=0$ of \eqref{eqnegut}, then use
\eqref{eqThetaGamma}, Proposition~\ref{propnegut}, and \eqref{eqnegut}
for the given $d$:
\begin{align*}
(-1)^{k-1}\Theta_{e_d}\nabla p_k
&=
\sum_{\beta\in\mathbb Z_{\geq0}^k, |\beta|=d}
D_{\gamma^{(0)}+\beta}\cdot1
+\sum_{i=1}^{k-1}(qt)^{k-i}
\sum_{\beta\in\mathbb Z_{\geq0}^k, |\beta|=d}
D_{\gamma^{(i)}+\beta}\cdot1\\
&=
s_{(k-1)^d}^{\perp}
\left(D_{\gamma^{(0)}+d\mathbf1}\cdot1
+\sum_{i=1}^{k-1}(qt)^{k-i}
D_{\gamma^{(i)}+d\mathbf1}\cdot1\right)\\
&=
(-1)^{k-1}s_{(k-1)^d}^{\perp}
\bigl(p_k[-MX^{1,d+1}]\cdot1\bigr).
\end{align*}
Canceling the common sign gives the result.
\end{proof}

For the reader's convenience, we recall the statistics on
$\operatorname{LRP}(k+d,k+d)_{\ast d}$ used in Theorem~\ref{thmDelta}; see
Iraci, Pagaria, and Paolini~\cite[Section~3 and Remark~3.22]{IPP26}.
Number the steps of a path from $1$ to $2(k+d)$. For a North step $i$
with lower endpoint $(x,y)$, and for an East step $i$ with right endpoint
$(x,y)$, set $v_i:=y-x$. Let $\mathcal V$ be the set of North steps,
$\mathcal D$ the set of decorated falls, and $\mathcal H$ the set of East
steps that are not decorated. Then
\[
\area(\pi)=\sum_{i\in\mathcal H}\Bigl(v_i-\min_{j\in\mathcal H}v_j\Bigr)
\]
and
\[
\begin{aligned}
\dinv(\pi)={}&\#\bigl\{(i,j)\in\mathcal V^2:(v_i,i)<(v_j,j)<(v_i+1,i),\ w_i<w_j\bigr\}\\
&+\#\{i\in\mathcal V:v_i<0\}-\#\{i\in\mathcal D:v_i\leq0\},
\end{aligned}
\]
where pairs are compared in lexicographic order and $w_i$ is the label of
the North step $i$. For $d=0$, these are the statistics of
Section~\ref{subsec:comb} with $m=n=1$.

\begin{proof}[Proof of Theorem~\ref{thmDelta}]
Apply Proposition~\ref{propDelta}, followed by
Corollary~\ref{cor:fall-decorated} with $(m,n)=(1,1)$
and $r=d$. Since $[km]_q/[k]_q=1$ when $m=1$, this gives
\begin{align*}
(-1)^{k-1}\Theta_{e_d}\nabla p_k
&=(-1)^{k-1}s_{(k-1)^d}^{\perp}
  \bigl(p_k[-MX^{1,d+1}]\cdot1\bigr)\\
&=\sum_{\pi\in\operatorname{LRP}(k+d,k+d)_{\ast d}}
  q^{\dinv(\pi)}t^{\area(\pi)}x_\pi,
\end{align*}
as required.
\end{proof}

\section*{Use of AI tools}
The theoretical derivations and the proofs in this paper were developed with the help of ChatGPT 5.6 Sol.  The authors studied and checked all of this material, and wrote the paper themselves.  We also checked the main identities by computer in small cases.  The authors take full responsibility for the content of the paper.

\end{document}